\definecolor{ddcyan}{rgb}{0,0.1,0.9}
\definecolor{ddmagenta}{rgb}{0.8,0,0.8}
\definecolor{orange}{rgb}{0.6,0.2,0}
\newcommand{\bele}{\begin{lemm}}
\newcommand{\enle}{\end{lemm}}
\newcommand{\bedef}{\begin{defi}}
\newcommand{\bete}{\begin{teor}}
\newcommand{\eddef}{\end{defi}}
\newcommand{\ente}{\end{teor}}
\newcommand{\beos}{\begin{osse}}
\newcommand{\eddos}{\end{osse}}
\newcommand{\bepr}{\begin{prop}}
\newcommand{\empr}{\end{prop}}
\newcommand{\bepro}{\begin{prob}}
\newcommand{\empro}{\end{prob}}
\newcommand{\bede}{\begin{defin}}
\newcommand{\edde}{\end{defin}}
\newcommand{\beco}{\begin{coro}}
\newcommand{\enco}{\end{coro}}
\newcommand{\beeq}[1]{\begin{equation}
%  \fbox{{\tiny\bf #1}}\dspace
 \label{#1}}
\newcommand{\eddeq}{\end{equation}}
\newcommand{\beeqa}[1]{\begin{eqnarray}
%  \fbox{{\tiny\bf #1}}
  \label{#1}}
\newcommand{\eddeqa}{\end{eqnarray}}
\newcommand{\beal}[1]{\begin{align}
%  \fbox{{\tiny\bf #1}}\dspace
 \label{#1}}
\newcommand{\eddal}{\end{align}}
\newcommand{\bespl}[1]{\begin{split}
%  \fbox{{\tiny\bf #1}}\dspace
 \label{#1}}
\newcommand{\edspl}{\end{split}}
\newcommand{\bega}[1]{\begin{gather}
%  \fbox{{\tiny\bf #1}}\dspace
 \label{#1}}
\newcommand{\edga}{\end{gather}}
\newcommand{\beeqax}{\begin{eqnarray*}}
\newcommand{\eddeqax}{\end{eqnarray*}}
\newcommand{\no}{\nonumber}
\newcommand{\tensorev}[2]{\epsilon(\uu(#1,#2))}
\newcommand{\tensorevt}[2]{\epsilon(\mathbf{u}_t(#1,#2))}
\newcommand{\dt}{\partial_t}
\newcommand{\nn}{{\bf n}}
\newcommand{\uu}{{\bf u}}
\newcommand{\vv}{{\bf v}}
\newcommand{\eps}{\varepsilon}
\newcommand{\epsi}{\varepsilon}
\newcommand{\weak}{\rightharpoonup}
\newcommand{\weakstar}{\mathop{\rightharpoonup}^{*}}
\DeclareMathOperator{\dive}{div}
\let\TeXchi\chi
\def\chi{{\setbox0 \hbox{\mathsurround0pt
$\TeXchi$}\hbox{\raise\dp0 \copy0 }}}
\newtheorem{theorem}{Theorem}[section]
\newtheorem{lemma}{Lemma}[section]
\newtheorem{proposition}[lemma]{Proposition}
\newtheorem{definition}[lemma]{Definition}%
\newtheorem{remark}[lemma]{Remark}%
\newtheorem{problem}[lemma]{Problem}
\newtheorem{notation}[lemma]{Notation}
\renewcommand{\part}{\partial_t}
\newcommand{\debole}{\,\weak\,}
\newcommand{\debolestar}{\,\weakstar\,}
\newcommand{\pairing}[4]{ \sideset{_{#1 }}{_{ #2}}  {\mathop{\langle #3 , #4  \rangle}}}
 \def\fin{\hfill
         \trait .3 5 0
         \trait 5 .3 0
         \kern-5pt
         \trait 5 5 -4.7
         \trait 0.3 5 0
 \medskip}
 \def\trait #1 #2 #3 {\vrule width #1pt height #2pt depth #3pt}
\newcommand{\forae}{\text{for a.a.}}
\newcommand{\foraa}{\text{for a.a.}}
\newcommand{\aein}{\text{a.e.\ in}}
\newcommand{\down}{\downarrow}
\newcommand{\GC}{\Gamma_{\mathrm{C}}}
\newcommand{\overlineGC}{\overline{\Gamma}_{\mathrm{C}}}
\newcommand{\Neu}{{\mathrm{N}}}
\newcommand{\Dir}{{\mathrm{D}}}
\newcommand{\R}{\Bbb{R}}
\newcommand{\N}{\Bbb{N}}
\newcommand{\QED}{\mbox{}\hfill\rule{5pt}{5pt}\medskip\par}
\newcommand{\uuh}{\widehat{\uu}}
\newcommand{\chih}{\widehat{\chi}}
\newcommand{\xih}{\widehat{\xi}}
\newcommand{\zetah}{\widehat{\zzeta}}
\newcommand{\dd}{\,\mathrm{d}}
\newcommand{\gnl}{\mathbf{g}}
\newcommand{\rme}{\mathrm{e}}
\newcommand{\kr}{k}
\newcommand{\nlocs}[2]{\mathcal{K}[#1](#2)}
\newcommand{\nlocss}[1]{\mathcal{K}[#1]}
\newcommand{\nlname}{\mathcal{K}}
\newcommand{\bsY}{\mathbf{Y}}
\newcommand{\bsV}{\mathbf{V}}
\newcommand{\bsW}{\mathbf{W}}
\newcommand{\bsH}{\mathbf{H}}
\newcommand{\calR}{\mathcal{R}}
\newcommand{\calE}{\mathcal{E}}
\newcommand{\bvarphi}{\widehat{\balpha}}
\newcommand{\nl}{\mathrm{nl}}
\newcommand{\balpha}{{\mbox{\boldmath$\alpha$}}}
\newcommand{\zzeta}{{\mbox{\boldmath$\zeta$}}}
\newcommand{\bwalpha}{{\mbox{\boldmath$\widehat{\alpha}$}}}
\newcommand{\traccia}{{|_{\GC}}}
\newcommand{\rmC}{\mathrm{C}}
\newcommand{\Gdir}{\Gamma_{\mathrm{D}}}
\newcommand{\Gnew}{\Gamma_{\mathrm{N}}}
\newcommand{\overlineGdir}{\overline{\Gamma}_{\mathrm{D}}}
\newcommand{\overlineGnew}{\overline{\Gamma}_{\Neu}}
\newcommand{\piecewiseConstant}[2]{\overline{#1}_{\kern-1pt#2}}
\newcommand{\underlinepiecewiseConstant}[2]{\underline{#1}_{\kern-1pt#2}}
\newcommand{\piecewiseLinear}[2]{{#1}_{\kern-1pt#2}}
\newcommand{\pwM}[2]{\widetilde{#1}_{\kern-1pt#2}}
 \def\trait #1 #2 #3 {\vrule width #1pt height #2pt depth #3pt}
\newcommand{\pwN}[2]{#1_{\kern-1pt#2}}
 \def\trait #1 #2 #3 {\vrule width #1pt height #2pt depth #3pt}
 \newcommand{\sfT}{\mathsf{T}}
 \newcommand{\omegah}{\widehat\omega}
\begin{document}

\date{March 31, 2017}

                                %TITOLO
                                %
\title{\Large Global existence for  a  nonlocal model for adhesive contact}
                           %
                                %Autore/                                %

\author{Elena Bonetti}
\address{E.\ Bonetti, Dipartimento di Matematica ``F.\
Enriques'', Universit\`a di Milano. Via Saldini 50, I--20133 Milano -- ITALY}
\email{elena.bonetti\,@\,unimi.it}

\author{Giovanna Bonfanti}
\address{G.\ Bonfanti, Sezione di Matematica, DICATAM, Universit\`a degli studi di Brescia. Via Valotti 9, I--25133 Brescia -- ITALY}
\email{giovanna.bonfanti\,@\,unibs.it}

\author{Riccarda Rossi}
\address{R.\ Rossi, DIMI, Universit\`a degli studi di Brescia. Via Branze 38, I--25133 Brescia -- ITALY}
\email{riccarda.rossi\,@\,unibs.it}

\maketitle

 \numberwithin{equation}{section}

\begin{abstract}
In this paper we address the analytical investigation of a model for adhesive contact introduced in \cite{freddi-fremond}, which includes nonlocal sources of damage on the contact surface,
 such  as the elongation. The resulting PDE system features  various nonlinearities rendering the unilateral contact conditions, the physical constraints on the internal variables, as well as the %integral
contributions  related to the nonlocal forces.
 For the associated  initial-boundary value problem we obtain a \emph{global-in-time}
existence result by proving the existence of a local solution via a suitable approximation procedure  and then by  extending the local solution to a global
one by a 
nonstandard prolongation argument.
\end{abstract}
\vskip3mm \noindent {\bf Key words:}  Contact, adhesion, nonlocal damage, existence results. 
\vskip3mm \noindent {\bf AMS (MOS) Subject Classification: 35K55,
35Q72,  74A15, 74M15.}
                                %
                                %headings e titolo breve in pagina
                                %
\pagestyle{myheadings} \markright{\it
Nonlocal  adhesion}
                                %
                                %TESTO
                                %

\section{Introduction}

The mathematical field of contact mechanics has  flourished  over the last decades, as illustrated for instance in the monographs \cite{eck05, sofonea-han-shillor, mon3}. In this paper, we focus on a
PDE system pertaining to a subclass of contact models, i.e.\  models for \emph{adhesive contact}. This phenomenon
%The phenomenon of \RRR adhesive contact   investigated in this paper
 plays  an important role in the  analysis  of the stability of structures. Indeed, it is well known that  interfacial  regions between materials are fundamental to ensure the strength and stability of structural elements.
We describe adhesive contact  by using a ``surface damage theory'', as  proposed by \textsc{M.\ Fr\'emond} (see e.g.\ \cite{fre}), for the action of the adhesive substance   (one may think of glue),  located on the contact surface.  This ``damage approach'' 
 provides an efficient and predictive theory for the mechanical behavior of the structure.
\par
 The main idea  underlying this modeling perspective is that, while  the basic unilateral contact theory does not allow for any resistance to tension, in adhesive contact resistance to tension is related to the action of microscopic bonds between the surfaces of the  adhering  solids.
 %\EEE  As a result,
  The resulting description of the state of the bonds between the two bodies is in the spirit of damage models. In fact, it is given
 % in terms of damage models, is useful to The description is given at a microscopic level introducing
 in terms of a phase parameter $\chi$  akin to  a damage parameter, characterizing the state of the cohesive bonds. The PDE system  is recovered  from  the balance laws of continuum mechanics, including (in a generalized principle of virtual power) the effects of micro-forces and micro-motions which are responsible for the breaking (or possibly repairing) of the  microscopic   bonds  on the contact surface.
 \par
 The mathematical analysis of models for adhesive contact and delamination \`a la Fr\'emond,
pioneered by  \cite{Point}, has attracted remarkable attention over the last 15 years, both in the case of \emph{rate-independent} (cf., e.g., \cite{MRK06,MRT12,RouSur}) and \emph{rate-dependent}
(see, among others, \cite{Raous},  \cite{sofonea2})
 evolution.
 \par
 The type of model  investigated in the present paper was first  rigorously derived and analyzed  in
 \cite{BBR1} (cf.\ also \cite{BBR2}). The associated PDE system couples an equation for the  macroscopic deformations of the body and a ``boundary'' equation on the contact surface, describing the evolution of the state of the glue in terms of a surface damage parameter. The system is highly nonlinear, mainly due to the presence of nonlinear boundary conditions and nonsmooth constraints on the internal variables,  providing the unilaterality of the contact,  the physical consistency of the damage parameter and, possibly, the unidirectional character of the degradation process).
  The model from \cite{BBR1} has been generalized  to the non-isothermal case,    with the temperature evolution  described by an entropy balance equation (cf.\ e.g.\ \cite{BBR3})  or by the more classical energy balance equation (see \cite{BBRen}).
  Furthermore,   in \cite{BBRfric}, \cite{BBRfrictemp}, and \cite{BBRen} friction effects have also been  included, even in the temperature-dependent  case. There,  we deal with the coupling between the Signorini condition for adhesive contact and a nonlocal  regularization of the Coulomb law %More precisely the regularization acts on the unbounded normal reaction as a coefficient of the tangential component.
 where the friction coefficient may depend on the temperature.
   \par
    In this paper we again focus the isothermal and frictionless case. In fact, we address the  analysis of a model  for adhesive contact that  generalizes that from \cite{BBR1}  by assuming  that also nonlocal forces act on the contact surface. This  leads to \ the presence of  novel  nonlocal (nonlinear) contributions in the  resulting  PDE system.
   More precisely, on the contact surface we consider 
 interactions 
 %\footnote{\RRR Ho tolto l'aggettivo 'local', qui, perch\'e mi sembrava che confondesse le idee, visto che sottolineiamo invece 'nonlocal'}
  between damage at a point and damage in its neighborhood
 %. Thus, we both prescribe  a \RRR nonlocal \EEE interaction within the \RRR adhesive substance \EEE
  (i.e.\ we use a gradient surface damage theory) and  moreover we admit  an interaction between the adhesive substance  and the two bodies. An example of this kind of behavior is given by experiments showing that  elongation, i.e.\ a variation of the distance of two distinct points on the contact surface,  may have damaging effects. Thus, a nonlocal quantity corresponding to the elongation is considered in the energy functionals.
  \par
  This model was  introduced in \cite{fre, freddi-fremond}.  While referring to the latter paper   for details  on the modeling   and comments  on the  applications and computational results,  in the following  lines we will outline the derivation of the model for the sake of completeness. %Like in \cite{freddi-fremond},
   We will   confine the discussion to
%We follow the approach introduced in \cite{freddi-fremond}, we refer to for further details.
 %ctually confining ourselves in
 the reduced case when only one body is considered in adhesive contact with a rigid support on a part of its boundary. We observe that this choice has the advantage of simplifying the exposition
 in comparison to the two-body case, while  affecting 
 neither  the relevance of the model nor its analytical investigation.
%%%%
%%%%
\subsection{The model and the PDE system}
Let us consider a body which is located  in a  sufficiently   smooth and bounded domain $\Omega\subset{\R}^3$ and lying 
on a rigid support during a finite time interval $(0,T)$. We denote its boundary  by
$\partial\Omega=\overlineGdir\cup\overlineGnew\cup\overlineGC$.
Here  $\Gdir, \, \Gnew,\, \GC$  are open subsets in the relative
topology of $\partial\Omega$, each of them with a smooth boundary
and disjoint one from each other.  In particular, $\GC$ is the
contact surface, which is considered as  a flat surface and identified  with a
subset of ${\R}^2$, cf.\ \eqref{hyp:GC} ahead.  Hereafter we shall suppose that both $\GC$ and
$\Gamma_\Dir$ have   positive measure.  We let ${\bf
u}={\bf 0}$ on $\Gdir$, while a known traction is
applied on $\Gnew$.

%We follow the modeling approach proposed by \cite{freddi-fremond}
%For the sake of simplicity, we do not include neither thermal evolution nor bulk damage processes nor friction effects.
 As already mentioned, in the model we neither encompass thermal evolution nor frictional effects. Thus, 
 the
state variables of the model  are
$$
(\epsilon(\uu),\chi,\nabla\chi, \uu_{|_{\GC}},  \gnl)
$$
where  we denote by $\uu$ the vector of small
displacements,  $\epsilon(\uu)$ the symmetric linearized strain tensor,  $\chi$ a damage parameter defined on
the contact
surface, $\nabla \chi$  its gradient, and
$\uu_{|_{\GC}}$ the trace of the displacement $\uu$ on $\GC$. The parameter $\chi$ is assumed to take values in
$[0,1]$, with $\chi=0$ for completely damaged bonds, $\chi=1$ for
undamaged bonds, and $\chi\in(0,1)$ for partially damaged bonds. 
%\GGG Moreover,
%in order to we introduce \EEE
The nonlocal term $\gnl$,
%\footnote{\RRR Ho rimpiazzato $g$ con $\gnl$ per coerenza con la notazione $\uu$, cmq ho usato dappertutto delle macro..} is
 defined  for $(x,y)\in \GC\times\GC$ by
\begin{equation}
	\label{def-gnl}
	 \gnl(x,y) = 2(x-y)\uu_{|_{\GC}}(x),
\end{equation}
 describes the damaging effects due to the elongation.
 The free energy of the system is given by the sum of a bulk, a local surface, and a nonlocal surface contributions. More precisely,
%\footnote{\RRR suggerisco di scrivere $ \Psi_{\nl} (\chi, \gnl (x,y))$, invece di  $\Psi_{\nl} (\chi(x),\chi(y), \gnl (x,y))$ come suggeriva Giovanna.. anche se la seconda scelta sarebbe piu' corretta, secondo me appesantirebbe troppo: dovremmo allora avere $\chi(x)$ e $\chi(y)$ anche nell'argomento di $\Psi$...}
%\footnote{\GGio A questo punto, dato che abbiamo tolto $\Psi$, io rimetterei $\Psi_{\nl} (\chi(x),\chi(y), \gnl (x,y))$. Infatti, nominare $x$ e $y$ e' coerente con $\gnl (x,y)$.}
%$$
%\Psi(\epsilon(\uu),\chi,\nabla\chi, \uu_{|_{\GC}}, \gnl (x,y))= \RRR \Psi_\Omega(\epsilon(\uu))+\Psi_{\GC}(\chi,\nabla\chi, \uu_{|_{\GC}})+  %\Psi_{\nl} (\chi, \gnl (x,y)) \EEE
%$$
%where
%we have a  bulk contribution
%$\Psi_\Omega$, \EEE and two surface ones, i.e. a local energy $\Psi_{\GC}$ and a nonlocal energy  $\Psi_{\nl}$.
the free energy  (density)  in $\Omega$ %(here we avoid the thermal part, which is assumed to be constant),
 is the classical one in elasticity theory
\begin{equation}\label{energiacorpo}
	\Psi_\Omega(\epsilon(\uu))=
	\frac 1 2 \epsilon(\uu)\mathbb{K}\epsilon(\uu),
\end{equation}
where   $\mathbb{K}=(a_{ijkh})$ stands for  the elasticity tensor.
 Moreover, the local surface part of the free energy (density) is given by 
\begin{equation}\label{energiabordo}
	\Psi_{\GC}(\chi,\nabla\chi, \uu_{|_{\GC}})=
	I_{[0,1]}(\chi)+\gamma(\chi)+\frac 1 2|\nabla\chi|^2+
	\frac 1 2 \chi|\uu_\traccia|^2+I_{(-\infty,0]}(\uu_\traccia\cdot{\bf n}),
\end{equation}
where  the indicator function $I_{[0,1]}$ of the
interval $[0,1]$ accounts for a  physical constraint on $\chi$, being
$I_{[0,1]}(\chi)=0$ if $\chi\in [0,1]$ and $I_{[0,1]}(\chi)=+\infty$
otherwise. Analogously, denoting by $I_{(-\infty,0]}$ the indicator function of
the interval $(-\infty,0]$, the term $I_{(-\infty,0]}(\uu_\traccia\cdot{\bf n})$
renders the impenetrability condition on the contact surface, as it
enforces that $\uu_\traccia\cdot{\bf n}\leq 0$ (${\bf n}$ is the
outward unit normal vector to $\GC$). Finally, the function
 $\gamma$,   sufficiently smooth and possibly nonconvex, is
related to non-monotone dynamics for $\chi$
(from a physical point of view, it
includes some cohesion in the material).
Finally, let us consider the nonlocal  surface  free energy (density), which is given by
%\footnote{\GGio ho riaggiunto $\chi(x), \chi(y)$ nell'argomento di $\Psi_\nl$ ..... \EEE}
%\footnote{\RRR $d$ \`e la dimensione dello spazio, che nel nostro caso \`e $d=3$...??}
\begin{equation}\label{energianonloc}
	 \Psi_{\nl}(\chi(x), \chi(y),\gnl(x,y)) 
	=\frac 1 2 \gnl^2(x,y)\chi(x)\chi(y)\rme^{\frac{-|x-y|^2}{d^2}},
\end{equation}
where the exponential function with distance $d$ describes the attenuation of nonlocal actions with distance $|x-y|$ between points $x$ and $y$ on the contact surface.
\par
As far as dissipation, we assume that
 there is no dissipation
  due to changes of 
  the nonlocal variable, while we  consider the dissipative variables $\epsilon(\uu_t)$ in $\Omega$ and $\chi_t$ in $\GC$. We
follow the
approach
proposed by \textsc{J.J. Moreau} to prescribe the
dissipated energy by means of the so-called pseudo-potential of
dissipation
which is a convex, nonnegative functional, attaining its minimal value $ 0$ 
when the  dissipation (described by the dissipative variables) is
zero.  More precisely,
we define the volume part $\Phi_\Omega$ of the pseudo-potential of
dissipation by
\begin{equation}\label{pseudocorpo}
	\Phi_\Omega ( \epsilon(\uu_t))=\frac 1 2 \epsilon(\uu_t)
	\mathbb{K}_v\epsilon(\uu_t),
\end{equation}
where $\mathbb{K}_v=(b_{ijkh})$ denotes the viscosity tensor. The surface part
$\Phi_{\GC}$ of the pseudo-potential of dissipation depends on
$\chi_t$
%and it is (here we have a rate-dependent evolution for $\chi$)
via
\begin{equation}\label{pseudobordo}
	\Phi_{\GC}(\chi_t)=\frac 1 2 |\chi_t|^2  + I_{(-\infty,0]}(\chi_t).
\end{equation}
The quadratic term $\frac 1 2 |\chi_t|^2$ encodes \emph{rate-dependent} evolution of $\chi$,
 where the indicator term $I_{(-\infty,0]}(\chi_t)$ forces $\chi_t$ to take nonpositive values and renders
the unidirectional  character of the damage process. 
\par
Hereafter, we shall omit for simplicity the index
$v_\traccia$ to denote the trace  on $\GC$ of a function $v$,
defined in $\Omega$. The equations are recovered by a generalization of the principle of virtual powers, in which microscopic forces (also nonlocal ones) responsible for the damage process in the adhesive substance are included. More precisely, for any virtual bulk velocity $\vv$ with $\vv= \bf{0}$ on $\Gdir$ and for any virtual microscopic velocity $w$  on the contact surface, we define the power of internal forces in $\Omega$ and $\GC$ as follows
\begin{equation}
\label{Pint}
\begin{aligned}
	{\mathcal P}_{int}&=-\int_\Omega\Sigma\epsilon(\vv) \dd \Omega -\int_{\GC} (B w+{\bf H}\nabla w) \dd x  +\int_{\GC}{\bf R}\vv \dd x \\
	&+\int_{\GC}\int_{\GC}2M(x,y)(x-y)\vv(x) \dd x \dd y
+\int_{\GC}\int_{\GC}(B^1_{\nl}(x,y) w(x)+B_{\nl}^2(x,y)w(y)) \dd x \dd y.
\end{aligned}
\end{equation}
Here, $\Sigma$ is the Cauchy stress tensor, ${\bf R}$ the classical macroscopic reaction on the contact surface, $B$ and ${\bf H}$ are local   interior forces, responsible
for the  degradation  of the  adhesive bonds between the body and the support.  The terms $M(x,y)$ and $B^i_{\nl}(x,y), i=1,2, $ are new scalar nonlocal
contributions: they stand for internal microscopic nonlocal forces on the contact surface and describe the effects of the elongation as a  source of damage.  The power of the external forces is given by 
\begin{equation}\label{Pext}
	{\mathcal P}_{ext}=\int_\Omega {\bf f}\vv \dd \Omega +\int_{\Gamma_\Neu}{\bf h}\vv \dd \Gamma,
\end{equation}
where
${\bf f}$ is a bulk known external force, while ${\bf h}$ is a given traction on $\Gnew$.  Note that, here we have disregarded external forces acting on the microscopic level and confined ourselves   to the case of null accelerations power.
\par
The  principle  of virtual powers, holding for every virtual microscopic and macroscopic velocities and every subdomain in $\Omega$,  leads to
the  quasistatic momentum balance
%(in the
%quasi-static case)\
%\footnote{\GGG ho tolto tutti i $\times (0,T)$ perche' in certe formule avremmo dovuto aggiungere la dipendenza esplicita da t e questo appesantiva... Il dominio spazio-tempo ricompare dal sistema \ref{PDE-INTRO}\EEE }
\begin{equation}\label{bilanciomacro}
	-\dive \Sigma={\bf f} \quad\text{in } \Omega,
\end{equation}
 which will be also posed in  a time interval $(0,T)$, and
 supplemented by the following boundary conditions 
\begin{align}
	&\label{reazione} \Sigma{\bf n}(x)={\bf R}(x)+\int_{\GC} 2(x-y)M(x,y)  \dd y  \quad\text{in }\GC,\\
	&{\uu}={\bf 0}\quad\text{in }\Gamma_\Dir,\quad\Sigma{\bf
		n}={\bf h}\quad\text{in }\Gamma_\Neu.
\end{align}
Observe that in \eqref{reazione}
 the boundary condition for the stress tensor on the contact surface
combines a local contribution involving the (pointwise) reaction ${\bf R}(x)$ and a  nonlocal force (defined in terms of the new variable $M(x,y)$), related to  the  elongation.  %This is the main novelty of our contribution with respect to the previous literature. It results

Again, the principle of virtual powers leads to 
a micro-force balance on the contact surface,  also posed in the time interval $(0,T)$,  given by
\begin{equation}\label{bilanciomicro}
	 B(x)-\dive {\bf H} (x)= \int_{\GC} \left( B^1_{\nl}(x,y)+B_{\nl}^2(y,x)\right) \dd y  \ \text{ in }\GC,\qquad{\bf H}\cdot{\bf n}_s=0
	\,\text{ on }\partial\GC.
\end{equation}
Here, ${\bf n}_s$ denotes the outward unit normal vector to
$\partial\GC$.

Constitutive relations for $\Sigma, {\bf R}, B, {\bf
	H}, M$, and $B^i_{\nl}, i=1,2,$ are given in terms of the free energies and the
pseudo-potentials of dissipation. More precisely, the constitutive relation for the stress tensor $\Sigma$ accounts
for the dissipative (viscous) dynamics of the deformations
\begin{equation}
	\Sigma=\frac{\partial\Psi_\Omega}{\partial\epsilon(\uu)}+
	\frac{\partial\Phi_\Omega}{\partial\epsilon(\uu_t)}=\mathbb{K}\epsilon(\uu) +
	\mathbb{K}_v\epsilon(\uu_t),
\end{equation}
while the local reaction ${\bf
	R}$ is given by
\begin{equation}
	{\bf
		R}=-\frac{\partial\Psi_{\GC}}{\partial\uu}\in -\chi\uu
	-\partial I_{(-\infty,0]}(\uu\cdot {\bf n}){\bf n}.
\end{equation}
As for as the nonlocal force on $\GC$, we  prescribe
\begin{equation}
	M(x,y)=-\frac{\partial\Psi_{\nl}}{\partial \gnl(x,y)}=-\gnl(x,y)\chi(x)\chi(y)\rme^{-\frac{|x-y|^2}{d^2}},
\end{equation}
%
%\footnote{\RRR Dobbiamo spiegare un po' di piu' questa formula: anche
%	se \`e intesa formalmente (e serve per fare tornare i conti...), che senso ha la derivata%
%	\[
%	\left(\frac{\partial\Psi_{nl}}{\partial\chi(x)}+
%	\frac{\partial\Psi_{nl}}{\partial\chi(y)}\right)??
%	\]
%	\ELE Ho messo qualche dettaglio in pi' se va meglio...\EEE}
while
the terms $B_{\nl}^1$ and $B_{\nl}^2$ are  (formally) defined as derivatives of $\Psi_{\nl}$  with respect to the values of the surface  damage parameter in $x$ and $y\in \GC$, respectively,  as it follows
\begin{align}
	& B^1_{\nl}(x,y)=-\frac{\partial\Psi_{\nl}}{\partial\chi(x)}=-\frac {\gnl^2(x,y)}2\rme^{-\frac{|x-y|^2}{d^2}}\chi(y),\\
	& B^2_{\nl}(x,y)=-\frac{\partial\Psi_{\nl}}{\partial\chi(y)}=-\frac {\gnl^2(x,y)}2\rme^{-\frac{|x-y|^2}{d^2}}\chi(x).
\end{align}
We further prescribe $B$ by
\begin{equation}
	B=\frac{\partial\Psi_{\GC}}{\partial\chi}+\frac{\partial\Phi_{\GC}}{\partial\chi_t}\in\partial
	I_{[0,1]}(\chi)+\gamma'(\chi) +\frac 1 2 |\uu|^2+\chi_t  + \partial I_{(-\infty,0]}(\chi_t),
\end{equation}
($ \partial I_{(-\infty,0]}$ and $\partial I_{[0,1]}$  denoting the convex analysis 
subdifferentials of $I_{(-\infty,0]}$ and $I_{[0,1]}$, resp.), and let
${\bf H}$ be
\begin{equation}
	{\bf
		H}=\frac{\partial\Psi_{\GC}}{\partial\nabla\chi}=\nabla\chi.
\end{equation}

Combining the previous constitutive relations with the balance laws,
we obtain
the following  boundary value problem
\begin{subequations}
	\label{PDE-INTRO}
	\begin{align}
		\label{eqI}
		&-\hbox{div }(\mathbb{K}\tensorev xt+ \mathbb{K}_v\tensorevt xt )={\bf f}(x,t), && (x,t) \in \Omega\times (0,T),\\
		\label{Dir}
		&\mathbf{u}(x,t)= {\bf 0}, && (x,t)\in \Gamma_{\mathrm{D}}\times (0,T),
		\\
		&
		(\mathbb{K}\tensorev xt+\mathbb{K}_v\tensorevt xt ){\bf n}={\bf h}(x,t), && (x,t)\in \Gamma_{\mathrm{N}}\times (0,T),\label{condIi}\\
		& (\mathbb{K}\tensorev xt+ \mathbb{K}_v\tensorevt xt  ){\bf n}+\chi(x,t){\bf u}(x,t)+
		\partial I_{(-\infty,0]}(\uu(x,t)\cdot {\bf n}){\bf n} && \no\\
		&\quad +\int_{\GC}2(x-y)\gnl(x,y)\chi(x,t)\chi(y,t)\rme^{-\frac{|x-y|^2}{d^2}}
		\dd y
		\ni{\bf 0}, && (x,t) \in  \GC \times (0,T),\label{condIii}
		\\
		&\chi_t (x,t)+  \partial I_{(-\infty,0]}(\chi_t(x,t)) -\Delta\chi(x,t)+\partial I_{[0,1]}(\chi(x,t)) +
		\gamma'(\chi(x,t))\no\\
		& \ni -\frac 1 2\vert{\uu (x,t)}\vert^2-\frac 1 2\int_{\GC}\left(\gnl^2(x,y)+\gnl^2(y,x)\right)\chi(y,t)\rme^{-\frac{|x-y|^2}{d^2}}
		\dd y, && (x,t) \in\GC
		\times (0,T),\label{eqII}
		\\
		&\partial_{\nn_s} \chi(x,t)=0  &&  (x,t)\in\partial \GC \times
		(0,T),\label{bord-chi}
	\end{align}
\end{subequations}
 where all integrals on $\GC$ involve the Lebesgue measure, which coincides with the Hausdorff measure on $\GC$ by the flatness requirement, cf.\ \eqref{hyp:GC} ahead.
% With slight abuse of notation, we have denoted both  the  \EEE
% (which will in fact coincide with the Lebesgue measure as we will suppose $\GC$ to be a \emph{flat}\footnote{\RRR questo dovrebbe servire solo se teniamo il termine $\Delta \chi$.. se ne facciamo a meno, allora non dobbiamo aggiungere questa ipotesi su $\GC$, e possiamo tenercelo generale (usando la misura di Hausdorff..)} surface). \EEE
%%
 Let us stress once again, with respect to the `standard' Fr\'emond's system for adhesive contact,  \eqref{PDE-INTRO}
encompasses  nonlocal terms both in the normal reaction on the contact surface (cf. with \eqref{condIii}) and in the flow rule \eqref{eqII} for $\chi$. In particular, we note that the right-hand side of \eqref{eqII} (see also the second of \eqref{nucleo}) may be different from zero even if $\uu(x,t)={\bf 0}$, due to the integral contributions which render the damaging effects of the elongation.
\par
 Taking into account the explicit expression \eqref{def-gnl} of $\gnl$,  observe  the integral terms on the left-hand side of \eqref{condIii}  and on the right-hand side of
\eqref{eqII}
can be  rewritten  as
\begin{align}
\label{nucleo}
\begin{cases}
\chi(x,t)\uu(x,t) \int_{\GC} \eta(x-y) \chi(y,t) \dd y,
\\
-\frac12 |\uu|^2(x,t) \int_{\GC} \eta(x-y) \chi(y,t) \dd y -\frac12 \int_{\GC} \eta(x-y) \chi(y,t) |\uu|^2(y,t)\dd y
\end{cases}
\quad \text{with } \eta(\zeta) = 4\zeta^2 \rm\rme^{-\zeta^2/d^2}\,.
\end{align}
%\footnote{\GGG questa parte la metterei tra le varie generalizzazioni del sistema \ref{PDE-INTRO} cioe' nella prossima sezione \RRR Io invece la terrei qui.. mi sembra che si leghi bene al discorso in questo punto, e poi forse vogliamo sottolineare questa generalizzazione, riepetto alle altre, che ormai sono 'standard'..}  
  In view of  \eqref{nucleo}, it is natural to address the analysis of a \emph{generalization} of system
\eqref{PDE-INTRO},  cf.\ \eqref{PDE} ahead,
where the kernel $\eta(x-y)$, featuring the bounded, even, and positive function $\eta \in \mathrm{C}^0(\R)$, is replaced by a
bounded, symmetric, and positive  kernel $\kr: \GC\times \GC \to [0,\infty)$, inducing the \emph{nonlocal} operator
\begin{equation}
	\label{nl-op-intro}
	\nlocs{w}x: = \int_{\GC} \kr(x,y) w(y) \dd y, \ \ w \in L^1(\GC).
\end{equation}
\subsection{Analytical results and plan of the paper}
 Our main result, \underline{\bf Theorem \ref{th:main}}, states the existence of global-in-time solutions for the Cauchy problem associated with a \emph{generalized} version of system
\eqref{PDE-INTRO}, featuring the nonlocal operator $\nlname$, and where the various, concrete, subdifferential operators  are replaced by maximal monotone nonlinearities.
It is stated in \underline{Section 2}, where we collect all the assumptions on the problem data and introduce a suitable variational formulation of our PDE system.
 In \underline{Section 3} we set up a suitable approximation, for which   we prove a local-in-time well-posedness  result by means of Schauder fixed point tecnique. % for the local existence combined with contraction estimates for the continuous dependence.
In \underline{Section 4}, by a priori estimates combined with compactness and monotonicity tools, we  develop a passage to the limit argument and we obtain a local-in-time solution for the original problem. Finally, in \underline{Section 5} we complete the proof of Theorem
 \ref{th:main}, extending the local solution to a global one by a carefully  devised  prolongation procedure.
%%%%
%%%%
\section{Setup and  main result}
\label{s:2}
\noindent
 Throughout the paper we shall assume that
 \begin{equation}
 \label{hyp:GC}
 \begin{gathered}
 \text{
 $\Omega$ is a
bounded   Lipschitz  domain in $\R^3$, with }
\\
\partial\Omega= \overlineGdir \cup \overlineGnew \cup \overlineGC, \ \text{ $\Gdir$, $\Gnew$, $\GC$,
 open disjoint subsets in the relative topology of $\partial\Omega$,
such that  }
\\
\mathcal{H}^{2}(\Gdir), \,  \mathcal{H}^{2}(\GC)>0, \qquad
\text{and ${\GC} \subset \R^2$ a  sufficiently smooth
\emph{flat} surface.}
\end{gathered}
\end{equation}
 More precisely,
by \emph{flat} we mean that $\GC$ is a subset of a hyperplane of
$\R^3$ and on $\GC$  the Lebesgue and Hausdorff measures $\mathcal{L}^2 $ and $
\mathcal{H}^2$ coincide.
 As for smoothness, we require that
$\GC$ has a   $\mathrm{C}^{1,1}$-boundary.
%\footnote{\RRR DUBBIO: ha senso chiedere $\Omega$ solo Lipschitz, e poi $\GC$ con bordo liscio?? Abbiamo fatto cosi' in BBR8, ma ora ho dei dubbi.. Per questa cosa ho scritto a Dorothee}
%In what follows, we will omit to recall assumption \eqref{hyp:GC} in the statements of the various results.
\par
 Hereafter,  will use the following
\begin{notation}
\label{notation-1} \upshape
 Given a Banach space $X$, we denote by
 $\pairing{}{X}{\cdot}{\cdot}$ the duality pairing
between its dual space  $X'$ and $X$ itself  and  by $\Vert\cdot\Vert_X$
both the norm in $X$ and in any power of it. In particular, we shall
use  short-hand notation for some function spaces
%\footnote{\RRR ho un po' cambiato la notazione per gli spazi funzionali, introducendo lo spazio $W$ per la $\chi$. E allora, per coerenza, ho rimpiazzato $\bsW$ con $\bsV$..}
\[
\begin{gathered}
 \bsH: =L^2 (\Omega;\R^3),
 %\quad
% \bsV =H^1 (\Omega;\R^3),
  \quad \bsV:=\{{\bf v}\in H^1 (\Omega;\R^3)\, : \ {\bf v}={\bf 0}\hbox{ a.e. on
}\Gamma_\Dir \}, \quad
 \bsY:
= H^{1/2}_{00,\Gamma_\Dir}(\GC;\R^3),
\\
H := L^2 (\GC), \quad V := H^1(\GC), \quad W: = \{ \chi \in H^2(\GC)\, : \ \partial_{\mathbf{n}_s}\chi=0 \},
\end{gathered}
\]
where we recall that
\begin{equation}
\label{def-space}
 H^{1/2}_{00,\Gamma_\Dir}(\GC;\R^3)=
\Big\{ \mathbf{w} \in H^{1/2}(\GC;\R^3)\, :  \ \exists\,
\tilde{\mathbf{w}}\in H^{1/2}(\Gamma;\R^3) \text{ with }
\tilde{\mathbf{w}}=\mathbf{w}  \text{ in $\GC$,} \
\tilde{\mathbf{w}}=\mathbf{0} \text{ in $\Gamma_\Dir$} \Big\}\,.
\end{equation}
The space $\bsV$  is endowed with the natural norm induced by
$H^1 (\Omega;\R^3)$.
Throughout the paper, we will also use that
\begin{equation}
\label{embedding}
\bsV \subset L^4(\GC) \text{ continuously, and } \bsV \Subset L^{4-s}(\GC) \text{ compactly for all  }  s \in (0,3],
\end{equation}
where the above embeddings have to be understood in the sense of traces.
%\RRR NOTATION FOR CONSTANTS.. \EEE
\par
Finally, we will use the symbols $c, c', C, C',\ldots$, with meaning possibly varying in the same line,  to denote several positive constants only depending on known quantities. Analogously, with the symbols $I_1,I_2,\ldots$ we will denote several integral terms appearing in the estimates.
\end{notation}
%%%%%%%%%%%%%%%%%%%%%%%%%%%%
\paragraph{\bf The bilinear forms in the momentum balance.} % elasticity theory.}
%%%%%%%%%%%%%%%%%
We recall the definition of  the standard bilinear forms of linear
viscoelasticity, which are involved in the variational formulation
of equation~\eqref{eqI}. Dealing with an anisotropic and
inhomogeneous material,
 we assume that the fourth-order tensors $\mathbb{K}=(a_{ijkh})$ and  $\mathbb{K}_v=(b_{ijkh})$,
denoting the elasticity and the viscosity tensor, respectively,
satisfy the classical symmetry and ellipticity conditions
\begin{subequations}
\label{ass-K}
\begin{equation}
\label{ass-K-sym-ell}
\begin{aligned}
& a_{ijkh}=a_{jikh}=a_{khij}\,,\quad  \quad
b_{ijkh}=b_{jikh}=b_{khij}\,,
  \quad   i,j,k,h=1,2,3,
\\
&  \exists \, \alpha_0>0 \,:  \qquad a_{ijkh} \xi_{ij}\xi_{kh}\geq
\alpha_0\xi_{ij}\xi_{ij} \quad   \forall\, \xi_{ij}\colon \xi_{ij}=
\xi_{ji}\,,\quad i,j=1,2,3\,,
\\
& \exists \, \beta_0>0 \,:  \qquad b_{ijkh} \xi_{ij}\xi_{kh}\geq
\beta_0\xi_{ij}\xi_{ij} \quad   \forall\, \xi_{ij}\colon \xi_{ij}=
\xi_{ji}\,,\quad i,j=1,2,3\,,
\end{aligned}
\end{equation}
where the usual summation convention is used. Moreover, we require
\begin{equation}
\label{ass-K-bdd}
a_{ijkh}, b_{ijkh} \in L^{\infty}(\Omega)\,, \quad  i,j,k,h=1,2,3.
\end{equation}
\end{subequations}
By the previous assumptions on the elasticity and viscosity
coefficients, the following bilinear forms $a, b : H^1 (\Omega;\R^3) \times H^1 (\Omega;\R^3)
\to \R$,   defined by
%\footnote{\RRR propongo di denotare lo small-strain tensor con il simbolo $\epsilon$ per non confonderci con il parametro di approssimazione $\varepsilon$..}
$$
\begin{aligned}
&
a({\bf u},{\bf v}):=\int_{\Omega} a_{ijkh} \epsilon_{kh}({\bf u})
\epsilon_{ij}({\bf v}) \dd x   &&   \text{for all } \uu, \vv \in H^1 (\Omega;\R^3),
\\
&
b({\bf u},{\bf v}):=\int_{\Omega} b_{ijkh} \epsilon_{kh}({\bf u})
\epsilon_{ij}({\bf v}) \dd x   &&   \text{for all }  \uu, \vv \in H^1 (\Omega;\R^3)
\end{aligned}
$$
turn out to be continuous and symmetric. In particular, we have
\begin{equation}
\label{continuity} \exists \, M >0: \ |a(\uu, \vv)| +   |b(\uu,
\vv)| \leq M \| \uu\|_{H^1 (\Omega)} \| \vv\|_{H^1 (\Omega)} \quad \text{for all }
\uu, \vv \in H^1 (\Omega;\R^3).
\end{equation}
%Moreover, note that \COMMENT{Se non usiamo mai questa formula, forse
%potremmo anche toglierla}
%$$
%b(\uu,\uu)=\|\varepsilon(\uu)\|_{H}^2\qquad \forall\, \uu \in
%\bfw\,.
%$$
Moreover, since $\Gamma_\Dir$ has positive measure,
 by Korn's inequality we deduce that $a(\cdot,\cdot)$ and
$b(\cdot,\cdot)$ are $\bsV$-elliptic, i.e., there exist $C_{a},
C_{b}>0 $ such that
\begin{align}
\label{korn}
 a({\bf u},{\bf u})\geq C_a\Vert{\bf u}\Vert^2_{\bsV},
 \qquad b({\bf u},{\bf u})\geq C_b\Vert{\bf u}\Vert^2_{\bsV} \qquad
\text{for all }\uu\in \bsV.
\end{align}
%%%%%
\paragraph{\bf Assumptions on the nonlinearities of the system.}
We will consider an extended version of system \eqref{PDE-INTRO},  where the subdifferentials $\partial I_{(-\infty,0]}$ in the  boundary condition \eqref{condIii} and in the flow rule \eqref{eqII} for $\chi$,
and $\partial I_{[0,1]}$ in \eqref{eqII},   are replaced by more general subdifferential operators.
\begin{enumerate}
\item We consider  a function
\begin{equation}
\label{hyp-alpha}
\widehat{\alpha} : \R \to [0,+\infty] \quad \text{proper, convex, and lower semicontinuous, with } \widehat{\alpha}(0)=0.
\end{equation}
 Note that  as soon as $0\in \mathrm{dom}(\widehat\alpha)$, we can always reduce to the case
$\widehat{\alpha}(0)=0$ by a translation.
 Then, we  introduce the proper, convex and lower semicontinuous functional
\[
\bwalpha:  \bsY \to [0,+\infty] \quad \text{ defined by } \quad \bwalpha(\uu): = \begin{cases}
\int_{\GC}\widehat{\alpha}(\uu \cdot \mathbf{n}) \,\dd x & \text{if } \widehat{\alpha}(\uu\cdot\mathbf{n}) \in L^1(\GC),
\\
+\infty &\text{otherwise}.
\end{cases}
\]
We set
$
\balpha: = \partial \bwalpha: \bsY\rightrightarrows \bsY'.
$
It follows from \eqref{hyp-alpha} that $\mathbf{0} \in \balpha(\mathbf{0})$.
\item
We consider
\begin{equation}
\label{hyp-rho}
\widehat{\rho} : \R \to [0,+\infty] \quad \text{proper, convex, and lower semicontinuous, with }
\mathrm{dom}(\widehat{\rho})\subset (-\infty,0] \text{ and } \widehat{\rho}(0)=0,
\end{equation}
and set $\rho: = \partial\widehat{\rho}:\R \rightrightarrows \R$.
\item
We let
 \begin{equation}
\label{hyp-beta}
\widehat{\beta} : \R \to [0,+\infty] \quad \text{proper, convex, and lower semicontinuous, with }
\mathrm{dom}(\widehat{\beta})\subset [0,+\infty) \text{ and } \widehat{\beta}(0)=0,
\end{equation}
and set $\beta: = \partial\widehat{\beta}:\R \rightrightarrows \R$.
\end{enumerate}
 The operator
$\balpha:  \bsY\rightrightarrows \bsY'$ will replace $\partial I_{(-\infty,0]}$ in the boundary condition \eqref{condIii}. Thus, as soon as $\mathrm{dom}(\widehat\alpha)\subset (-\infty,0]$, with $\balpha$ we render the impenetrability  (unilateral)  constraint $\uu\cdot\mathbf{n}\leq 0$
a.e.\ on  $\GC\times (0,T)$.
 The operators $\rho$ and
$\beta$ will generalize the subdifferentials
$\partial I_{(-\infty,0]}$
 and
 $\partial I_{[0,1]}$ in \eqref{eqII}. 
  On the one hand, the requirement
  % $\mathrm{dom}(\widehat{\rho})\subset (-\infty,0]$ and
 $\mathrm{dom}(\widehat{\beta})\subset [0,+\infty) $
  guarantees
 % $\chi_t\leq0$ and
 $\chi\geq0$ a.e.\ in $ \GC\times (0,T)$. On the other hand, starting from an initial datum $\chi_0 $ fulfilling $0\leq \chi_0\leq 1$ a.e.\ on $\GC$
  (cf.\ \eqref{ini-chi} below) and
  taking into account that $\chi_t\leq0$  a.e.\ in $ \GC\times (0,T)$ since $\mathrm{dom}(\widehat{\rho})\subset (-\infty,0] $, 
  we will ultimately deduce that
  $\chi\in [0,1]$ a.e.\ on $\GC \times (0,T)$.
\begin{enumerate}
 \setcounter{enumi}{3}
 \item
 As for the kernel $\kr$ defining the operator $\nlname $ from \eqref{nl-op-intro}, we will require that
 \begin{equation}
 \label{hyp-k}
 \kr:\GC \times \GC \to [0,+\infty) \text{ is symmetric, with } \kr \in L^\infty(\GC{\times}\GC)\,.
 \end{equation}
 \item Finally, we will suppose that
 %\footnote{\RRR da uniformare con l'introduzione: Gio' e io abbiamo chiamato $\gamma'$ il termine non-monotono nell'equazione per la $\chi$.... quindi nell'energia compare $\gamma$, per cui chiediamo la \eqref{hyp-gamma}...}
 \begin{equation}
 \label{hyp-gamma}
 \gamma \in \mathrm{C}^2(\R) \quad \text{with } \gamma' \text{ Lipschitz on } \R.
 \end{equation}
 \end{enumerate}
 \paragraph{\bf Assumptions on the problem data.}
We suppose that
\begin{subequations}
\label{hyp:initial}
\begin{align}
\label{ini-u}
 &
 \uu_0 \in \bsV \ \text{with} \ \uu_{0} \in  \mathrm{dom} (\bvarphi)\,,
 \\
&
 \label{ini-chi}
\begin{aligned}
 \chi_0\in W, \quad 
  \chi_0 \in [0,1]   \text{  on } \GC, \quad  \beta^0 (\chi_0) \in H, 
\end{aligned}
\end{align}
\end{subequations}
where $\beta^0(\chi_0)$ denotes the minimal section of
$\beta(\chi_0)$.
Then, from
\[
0 \leq \widehat{\beta}(\chi_0) \leq \beta^0(\chi_0) \chi_0 \quad \text{ a.e.\ on }\GC
\]
(which follows from the positivity of $\widehat\beta$
and from the fact that $\widehat\beta(0)=0$), we immediately deduce that
\[
\widehat{\beta}(\chi_0) \in L^1(\GC).
\]
 As far as the body force $\mathbf{f} $ and the
surface traction $\mathbf{h} $  are concerned,
 we prescribe that
 \begin{subequations}
\label{hyp:forces}
\begin{align}
& \label{hyp:f} \mathbf{f} \in L^2 (0,T;\bsH),
\\
& \label{hyp:h}  \mathbf{h} \in L^2 (0,T;
(H_{00,\Gamma_\Dir}^{1/2}(\Gamma_\Neu;\R^3))'),
\end{align}
\end{subequations}
and we introduce $\mathbf{F}:(0,T) \to \bsV'$ by
\begin{equation}
\label{effegrande} \pairing{}{\bsV}{\mathbf{F}(t)}{\vv}:=
\int_\Omega \mathbf{f}(t) \cdot \vv \dd x 
+\pairing{}{H_{00,\Gamma_\Dir}^{1/2}(\Gamma_\Neu;\R^3)}{\mathbf{h}(t)}{\vv} \quad \text{for all
} {\vv} \in\bsV \quad \forae \, t \in (0,T).
\end{equation}
Of course, thanks to \eqref{hyp:forces},  $\mathbf{F} \in L^2
(0,T;\bsV')$.
%%%%%%
\par
We are now in a position to give the variational formulation of the
initial-boundary value problem for the (generalized) version of system \eqref{PDE-INTRO}
tackled in this paper.
\begin{problem}
\label{prob:PDE}
Starting from initial data $(\uu_0,\chi_0)$ fulfilling \eqref{hyp:initial},
find a quintuple
%\footnote{\RRR problema di LaTeX: vorrei scrivere $\zeta$ in bold ma non ci riesco...}
 $(\uu,\zzeta,\chi,\omega,\xi)$ with
\begin{subequations}
\label{reg}
\begin{align}
&
\label{reg-u}
\uu\in H^1(0,T;\bsV),
\\
&
\label{reg-zeta}
\zzeta\in L^2(0,T;\bsY'),
\\
&
\label{reg-chi}
\chi\in L^2(0,T;W) \cap L^\infty (0,T;V)  \cap H^1(0,T;H),
\\
&
\label{reg-omega/xi}
\omega,\, \xi\in L^2(0,T;H)
\end{align}
\end{subequations}
such that $\uu(0)=\uu_0$, $\chi(0)=\chi_0$,
and fulfilling a.e.\ in (0,T)
\begin{subequations}
\label{PDE}
\begin{align}
&  \label{eqU}
\! \!\!\!\!\!\!
b(\uu_t,\mathbf{v}) + a(\uu,\mathbf{v}) +
 \int_{\GC} \chi\uu \mathbf{v} \dd x +
 \langle \zzeta,\mathbf{v} \rangle_{\bsY}
+  \int_{\GC} \chi\uu \nlocss\chi \mathbf{v} \dd x =
 \pairing{}{\bsV}{\mathbf{F}}{\vv} && \text{for all } \vv \in \bsV,
 \\
 &
 \label{zeta}
 \! \!\!\!\!\!
% \! \!\!\!\!\!\!\!\!\!
% \! \!\!\!\!\!\!\!\!\!\!\!
 \zzeta \in \balpha(\uu) && \text{in } \bsY',
 \\
 &
 \label{eqCHI}
 \! \!\!\!\!\!\!
% \! \!\!\!\!\!\!\!\!\!\!\!
 \chi_t + \omega -\Delta\chi+ \xi + \gamma'(\chi) = -\frac12 |\uu|^2 - \frac12 |\uu|^2 \nlocss\chi  -\frac12 \nlocss{\chi|\uu|^2} &&  \text{ a.e.\ on } \GC,
 \\
 &
 \label{omega}
 % \! \!\!\!\!\!\!\!\!\!\!\!
 \! \!\!\!\!\!
 \omega \in \rho(\chi_t) &&   \text{ a.e.\ on } \GC,
  \\
 &
 \label{xi}
  %\! \!\!\!\!\!\!\!\!\!\!\!
 \! \!\!\!\!\!\! \xi \in \beta(\chi) &&   \text{ a.e.\ on } \GC.
 \end{align}
\end{subequations}
\end{problem}
\noindent

 Now we state the main result of this paper, which ensures the existence of solutions to Problem \ref{prob:PDE}.
Let us mention in advance that, for the selections $ \omega \in \rho(\chi_t) $ and $\xi \in \beta(\chi)$ we will obtain stronger integrability properties
than those required with \eqref{reg-omega/xi}, cf.\ \eqref{further-reg} below. They are obtained by means of  a special a priori estimate, first introduced in
\cite{bfl}, which is tailored to the doubly nonlinear structure of  \eqref{eqCHI}  and has to be performed in order to separately estimate the selections in $\rho(\chi_t)$ and $\beta(\chi)$, cf.\
also Remark \ref{rmk:features-reg-prob} ahead. It is in view of this estimate that the conditions $\chi_0 \in  W$ 
(in accordance with the property $\chi \in L^\infty(0,T;W) \cap W^{1,\infty}(0,T;H) \subset  \mathrm{C}_{\mathrm{weak}}^0([0,T];W)$) 
 and $\beta^0(\chi_0) \in H$
from \eqref{ini-chi} are needed. 
\begin{theorem}
\label{th:main}
Assume \eqref{hyp:GC}, \eqref{ass-K}, \eqref{hyp-alpha}--\eqref{hyp-gamma},
and \eqref{hyp:forces}. Then, for any pair $(\uu_0,\chi_0)$ of initial data fulfilling
\eqref{hyp:initial}
there exists a quintuple $(\uu,\zzeta,\chi,\omega,\xi)$ solving  Problem \ref{prob:PDE}
 and
 \begin{enumerate}
 \item
 enjoying   the enhanced
 integrability properties
%such that, in addition,
\begin{equation}
\label{further-reg}
\chi\in L^\infty (0,T;W)    \cap H^1(0,T;V)   \cap  W^{1,\infty}(0,T;H)   \text{ and } \omega, \, \xi \in L^\infty (0,T;H),
\end{equation} and  such that 
 \begin{equation}
 \label{confinement}
  0 \leq \chi(x,t) \leq 1  \quad  \text{for all } (x,t)\in  \overlineGC\times [0,T],
 \end{equation}
\item
fulfilling the energy-dissipation  inequality 
\begin{equation}
\label{enid-balance}
 \int_s^t 2\calR(\uu_t(r),\chi_t(r)) \dd r +\int_s^t  \int_{\GC} \widehat{\rho}(\chi_t) \dd x + \calE(\uu(t),\chi(t)) \leq  \calE(\uu(s),\chi(s))   +\int_s^t
\pairing{}{\bsV}{\mathbf{F}(r)}{\uu_t(r)} \dd r
\end{equation}
for all $ 0 \leq s \leq t \leq T$,  with $\calR$ and $\calE$ given by \eqref{underlying-diss} and \eqref{underlying-en} below.
\par
In fact, \eqref{enid-balance} holds as a \emph{balance} if $\widehat{\rho}$ is positively  homogeneous of degree $1$ (i.e., $\widehat{\rho}(\lambda v) = \lambda \widehat{\rho}(v)$ for all $\lambda\geq 0$), e.g.\ in the particular case $\widehat{\rho} = I_{(-\infty,0]}$.
 \end{enumerate}
\end{theorem}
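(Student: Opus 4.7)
My plan follows the four-step scheme announced in the introduction: (i) regularize the maximal monotone graphs $\balpha$, $\rho$, $\beta$ by Yosida approximation with parameter $\mu>0$; (ii) obtain a local-in-time solution of the regularized problem via Schauder's fixed-point theorem; (iii) derive a priori estimates uniform in $\mu$ and pass to the limit $\mu \downarrow 0$ to get a local solution of Problem \ref{prob:PDE}; (iv) extend the local solution to $[0,T]$ by a prolongation argument, using the built-in confinement $\chi\in[0,1]$ to uniformly control the nonlocal coupling. The enhanced regularity \eqref{further-reg} and the energy-dissipation inequality \eqref{enid-balance} will fall out of the estimates and of lower semicontinuity.

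For local existence of the regularized system I would take $\widetilde\chi$ in a closed bounded subset of $C^0([0,T^*];H)$ with an $L^\infty(0,T^*;V)$ bound, and define a solution map $\mathcal{S}:\widetilde\chi\mapsto\chi$ as follows. First, with $\widetilde\chi$ plugged into the nonlocal coefficient $\nlocss{\widetilde\chi}$ and into the pointwise coefficient of $\uu$, equation \eqref{eqU} (with Yosida-regularized $\balpha_\mu$) becomes a coercive problem on $\bsV$ with a Lipschitz perturbation, solvable in $H^1(0,T^*;\bsV)$. Second, with the resulting $\uu$, equation \eqref{eqCHI} (with $\rho_\mu,\beta_\mu$ in place of $\rho,\beta$) becomes a semilinear parabolic equation on $\GC$ with Lipschitz nonlinearities and $L^2$-forcing, solvable in $L^2(0,T^*;W)\cap H^1(0,T^*;H)$. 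The key point is that the boundedness and symmetry of $\kr$ make $\nlocss{\cdot}:L^1(\GC)\to L^\infty(\GC)$ continuous, so the nonlocal terms behave as regular bilinear perturbations. Aubin--Lions compactness and continuity of $\mathcal{S}$ then yield Schauder's theorem on a small interval $T^*=T^*_\mu$.

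The crux of the analysis is a pair of a priori estimates uniform in $\mu$. The \textbf{basic energy estimate} comes from testing \eqref{eqU} with $\uu_t$ and \eqref{eqCHI} with $\chi_t$ and summing: the coupling $\int_{\GC}\chi\uu\cdot\uu_t$ combines with the quadratic source $-\tfrac12\int|\uu|^2\chi_t$ into $\tfrac{d}{dt}\bigl(\tfrac12\int_{\GC}\chi|\uu|^2\bigr)$, and by symmetry of $\kr$ the nonlocal terms assemble into $\tfrac{d}{dt}\bigl(\tfrac14\iint \kr(x,y)\chi(x)\chi(y)|\uu(x)|^2\dd x\dd y\bigr)$; Gronwall then gives the uniform bounds corresponding to \eqref{reg-u}--\eqref{reg-chi} together with $L^1$-control of $\widehat\rho_\mu(\chi_t)$, $\widehat\beta_\mu(\chi)$ and $\bwalpha_\mu(\uu)$. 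The more delicate \textbf{doubly nonlinear estimate}, needed to control $\omega_\mu\in\rho_\mu(\chi_t)$ and $\xi_\mu\in\beta_\mu(\chi)$ \emph{separately} in $L^\infty(0,T;H)$, follows \cite{bfl}: one formally differentiates \eqref{eqCHI} in time and tests with $\chi_t$, exploiting the monotonicity of $\rho_\mu,\beta_\mu$ to absorb cross-terms and the hypotheses $\chi_0\in W$, $\beta^0(\chi_0)\in H$ to handle the initial contributions. This is what produces the enhanced regularity \eqref{further-reg}. The limit $\mu\downarrow 0$ is then obtained via Aubin--Lions (yielding strong $\chi_\mu\to\chi$ in $C^0([0,T];L^p(\GC))$ and $\uu_\mu\to\uu$ in $C^0([0,T];\bsH)$, hence convergence of the nonlocal quadratic products), together with the classical monotonicity-plus-$\limsup$ closure argument to identify the selections $\zzeta\in\balpha(\uu)$, $\omega\in\rho(\chi_t)$, $\xi\in\beta(\chi)$.

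The main obstacle is the prolongation to a global solution, since the short-time constant $T^*_\mu$ depends on the size of the initial data and a naive restart from $(\uu(T^*),\chi(T^*))$ could lose control of the quadratic nonlocal coupling. The saving observation is that the constructed local solution automatically satisfies the confinement \eqref{confinement}: $\chi\ge 0$ follows from $\mathrm{dom}(\widehat\beta)\subset[0,\infty)$, and $\chi\le\chi_0\le 1$ follows from $\chi_t\le 0$ (forced by $\mathrm{dom}(\widehat\rho)\subset(-\infty,0]$). Hence the nonlocal coefficients $\nlocss\chi$, $\nlocss{\chi|\uu|^2}$ are uniformly $L^\infty$-controlled by the current $\bsV$-norm of $\uu$, and the energy-dissipation inequality \eqref{enid-balance}, recovered from the approximate equality by liminf-semicontinuity of $\calR$ and of $\int\widehat\rho(\chi_t)$, provides a Gronwall-ready a priori bound on $(\uu,\chi)$ on any subinterval that is independent of the starting time. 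This yields a uniform positive lower bound on the lifespan of each extension and a uniform bound on its endpoint norms, so the standard continuation argument reaches $t=T$. Finally, when $\widehat\rho$ is positively $1$-homogeneous the identity $\widehat\rho(\chi_t)=\omega\chi_t$ holds for any $\omega\in\rho(\chi_t)$, which promotes \eqref{enid-balance} to an equality by testing \eqref{eqU}--\eqref{eqCHI} directly with $(\uu_t,\chi_t)$.
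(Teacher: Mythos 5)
The central gap is in your ``doubly nonlinear estimate''. You propose to differentiate the flow rule \eqref{eqCHI-eta} in time and test with $\chi_t$. That estimate would control $\chi_t$ in $L^\infty(0,T;H)\cap L^2(0,T;V)$, but it does \emph{not} yield the separate $L^\infty(0,T;H)$ bounds on $\rho_\eps(\chi_t)$ and $\beta_\eps(\chi)$ that \eqref{further-reg} requires. A comparison in \eqref{eqCHI-eta} is of no help: to deduce $-\Delta\chi+\beta_\eps(\chi)\in L^\infty(0,T;H)$ from the comparison you would first need $\rho_\eps(\chi_t)\in L^\infty(0,T;H)$, and since the Lipschitz constant of $\rho_\eps$ blows up like $1/\eps$ the bound on $\chi_t$ does not transfer; conversely, to read off $\rho_\eps(\chi_t)\in L^\infty(0,T;H)$ you would need $-\Delta\chi+\beta_\eps(\chi)\in L^\infty(0,T;H)$, which you do not yet have. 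The two bounds are circularly entangled and your proposed test does not break the circle. The paper (and in fact \cite{bfl}, which you cite) instead tests \eqref{eqCHI-eta} \emph{directly} by $\partial_t(-\Delta\chi_\eps+\beta_\eps(\chi_\eps))$. This produces $\tfrac12\tfrac{\dd}{\dd t}\|{-\Delta\chi_\eps+\beta_\eps(\chi_\eps)}\|_H^2$ on the left, together with the nonnegative contributions $\int_{\GC}\beta'_\eps(\chi_\eps)|\partial_t\chi_\eps|^2$, $\int_{\GC}\rho'_\eps(\partial_t\chi_\eps)|\nabla\partial_t\chi_\eps|^2$, and $\int_{\GC}\beta'_\eps(\chi_\eps)\rho_\eps(\partial_t\chi_\eps)\partial_t\chi_\eps$ (the last nonnegative because $\beta'_\eps\ge0$, $\rho_\eps$ is monotone and $\rho_\eps(0)=0$). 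Gronwall then gives $\|{-\Delta\chi_\eps+\beta_\eps(\chi_\eps)}\|_{L^\infty(0,T;H)}\le C$ uniformly, which splits into separate $L^\infty(0,T;H)$ bounds on $-\Delta\chi_\eps$ and $\beta_\eps(\chi_\eps)$ since $\int_{\GC}(-\Delta\chi_\eps)\beta_\eps(\chi_\eps)\ge0$; the bounds on $\chi_t$ and $\rho_\eps(\chi_t)$ then follow by comparison. You need to replace your test function with this one, and note that the initial contribution $\|{-\Delta\chi_0+\beta_\eps(\chi_0)}\|_H\le\|{-\Delta\chi_0}\|_H+\|\beta^0(\chi_0)\|_H$ is controlled precisely thanks to the hypotheses $\chi_0\in W$ and $\beta^0(\chi_0)\in H$ in \eqref{ini-chi}.

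Two smaller points. First, you also Yosida-regularize $\balpha$; the paper keeps $\balpha$ untouched and handles the momentum balance directly by a time discretization as in \cite[Prop.\ 4.2]{BBR1}. This is a harmless stylistic deviation, though it costs you an extra identification step in the limit passage. Second, your prolongation sketch captures the right intuition (the confinement \eqref{confinement} of the \emph{limit} solution is what makes \eqref{enid-balance} a genuinely global a priori bound, cf.\ Lemma \ref{l:enid-cons}), but you gloss over the point the paper handles with the notion of \emph{approximable solution}: the regularity estimate above must be run on the approximate system, whose own basic estimate is \emph{not} global (Remark \ref{rmk:features-reg-prob}), and a naive restart at $T^*$ from $(\uu(T^*),\chi(T^*))$ does not automatically provide an approximating sequence on the glued interval $[0,T^*+\eta]$, because the approximants on $[T^*,T^*+\eta]$ emanate from the limit state $(\uu(T^*),\chi(T^*))$ rather than from the $\eps$-dependent states $(\uu_\eps(T^*),\chi_\eps(T^*))$. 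A diagonalization argument, as carried out in Section~\ref{s:5} (following \cite{BBR1}), is needed to close this.
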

\noindent
\begin{remark}
\upshape The energy-dissipation  inequality   \eqref{enid-balance} in fact holds along \emph{any} solution to Problem \ref{prob:PDE}. It reflects the fact that system \eqref{PDE} has a \emph{(generalized) gradient system} structure.  In fact, it can be rewritten as
the (abstract) doubly nonlinear differential inclusion
\begin{equation}
\label{gen-gra-sys}
\partial \calR(\uu_t(t),\chi_t(t)) + \partial^- \calE(\uu(t),\chi(t)) \ni 0 \qquad \text{in } \bsV' \times H \qquad \foraa\, t \in (0,T),
\end{equation}
 involving the
dissipation potential
$\calR:  \bsV \times H \to [0,+\infty]$
\begin{equation}
\label{underlying-diss}
\calR(\uu_t,\chi_t):= \mathcal{R}_{\mathsf{u}}(\uu_t) + \calR_{\mathsf{\chi}}(\chi_t), \qquad
\text{with }
\begin{cases}
 \mathcal{R}_{\mathsf{u}}(\uu_t): =  \frac12 b(\uu_t,\uu_t),
\\
 \calR_{\mathsf{\chi}}(\chi_t): = \frac12 \|\chi_t\|_{H}^2 ,
 \end{cases}
\end{equation}
and the energy functional
%\footnote{\RRR qui potremmo richiamare la notazione per le densita' di energia (che abbiamo usato nella derivazione del modello) per i vari contributi a $\calE$.. in modo da legare questa parte a quello che \`e stato scritto nell'introduzione}
\begin{equation}
\label{underlying-en}
\begin{aligned}
 \calE(\uu,\chi): =  &   \mathcal{E}_1(\uu)   + \mathcal{E}_2(\uu,\chi)
\qquad \text{with }
\\
&
 \mathcal{E}_1(\uu) =
 \frac12 a(\uu,\uu) + \bwalpha(\uu),  %- \pairing{}{\bsV}{\mathbf{F}(t)}{\uu},
 \\
 & \mathcal{E}_2(\uu,\chi): =  \frac12 \int_{\GC} \chi |\uu|^2 \dd x +\frac12 \int_{\GC} \chi |\uu|^2 \nlocss\chi \dd x +
  \int_{\GC} \left(\frac12 |\nabla\chi|^2 + \widehat{\beta}(\chi){+} \gamma(\chi) \right) \dd  x\,.\end{aligned}
\end{equation}
In \eqref{gen-gra-sys}, $  \partial \calR : \bsV \times H \rightrightarrows \bsV' \times H $ is the subdifferential of $\calR$ in the sense of convex analysis, while
$ \partial^- \calE :  \bsV \times H \rightrightarrows \bsV' \times H $ is the \emph{Fr\'echet subdifferential} (cf., e.g., \cite{Mordu}) of $\calE$. % with respect to the variables $(\uu,\chi)$.
 We will not specify its definition, here, but only mention that, in the case of the specific energy functional driving system  \eqref{PDE}, $ \partial^- \calE$ is given by the sum of the convex analysis subdifferentials of the convex contributions to $\calE$, with the G\^ateau-derivatives of the nonconvex, but smooth contributions. Thus,
%so that
 \eqref{gen-gra-sys} yields  \eqref{PDE}.
%%%%%%%
\par
However,
 for technical reasons, we will not directly exploit
the structure \eqref{gen-gra-sys} in the proof of our existence result for system \eqref{PDE}.  Nonetheless,  \eqref{gen-gra-sys}  underlies  \eqref{enid-balance}. 
%, holding along any solution
%to Problem 2.2.
\end{remark}
%%%%%%
\begin{remark}
\upshape
No uniqueness result seems to be available for Problem \ref{prob:PDE}, due to the doubly nonlinear character of the flow rule for  $\chi$. Nonetheless, arguing in the very same way as in the proof of \cite[Prop.\ 2.3]{BBR1} (cf.\ also the proof of Prop.\ \ref{prop:loc-exis} ahead), it should be possible to prove a \emph{local-in-time} uniqueness result,  in the particular (physical) case of $\beta=\partial I_{[0,1]}$. Namely, that any two quintuples of solutions $(\uu_1,\zzeta_1,\chi_1,\omega_1, \xi_1)$ and $(\uu_2,\zzeta_2,\chi_2,\omega_2, \xi_2)$ such that there exists $T_0\in (0,T]$ with
\[
0<\chi_i(x,t)<1 \qquad \text{for all } (x,t)\in \overlineGC \times [0,T_0] \quad \text{for } i=1,2,
\]
do coincide on $[0,T_0]$. In turn, the above separation property is guaranteed as soon as the initial datum $\chi_0$ fulfills
%$\min_{x\in \overlineGC} \chi_0(x)>0$
 $\chi_0\in [\delta,1)$ on $\GC$, for some $\delta\in(0,1)$ 
(note that \eqref{ini-chi} allows for $\chi_0$ taking the  values $0$ and $1$, instead).
\end{remark}
\noindent
 The \emph{proof} of Theorem \ref{th:main}
will be developed throughout Sections \ref{s:3}--\ref{s:5} by setting up an approximate system and proving the existence (and uniqueness) of local-in-time solutions for it (Sec.\ \ref{s:3}),
passing to the limit  with the approximation parameter and concluding the existence of local solutions for Problem \ref{prob:PDE} (Sec.\ \ref{ss:3.3}), extending the local solution to a global one via a non-standard prolongation argument (Sec.\ \ref{s:5}). 
%%%%
\section{The approximate system}
\label{s:3}
 After introducing the approximate problem in Sec.\ \ref{ss:3.1}, in Sec.\ \ref{ss:3.2}
we prove its local-in-time well-posedness by means of a fixed point argument combined with continuous dependence estimates. 
\subsection{Setup of the approximate problem}
\label{ss:3.1}
In system \eqref{PDE-approx} below, we  will
 replace  the operators $\rho,\,  \beta : \R \rightrightarrows \R$ featuring in the flow rule for $\chi$
 by their Yosida
regularizations
$\rho_\eps $ and $\beta_\eps$, cf.\ e.g.\
\cite{brezis73}.
 We will exploit  that $\rho_\eps,\,\beta_\eps:\R \to \R$  are Lipschitz continuous functions on $\R$, and denote by $\widehat{\rho}_\eps$ and $\widehat{\beta}_\eps$ their primitives fulfilling
 $\widehat{\rho}_\eps (0) = 0$ and  $\widehat{\beta}_\eps (0) =0$. The convex, $\rmC^2$-functions  $\widehat{\rho}_\eps $ and  $\widehat{\beta}_\eps $ are in fact the Yosida approximations of the functions $ \widehat{\rho}$ and $\widehat{\beta}$, defined by
 %\footnote{\RRR qui ho richiamato la def.\ delle approssimate di Yosida.. nel caso possa servire..}
 \begin{equation}
 \label{Yos-reg}
 \widehat{\rho}_\eps (y): = \min_{x\in \R} \left( \frac1{2\eps} |y{-}x|^2 + \widehat{\rho}(x)\right) \qquad
 \widehat{\beta}_\eps(y):= \min_{x\in \R} \left( \frac1{2\eps} |y{-}x|^2 + \widehat{\beta}(x)\right) \,.
 \end{equation}
% For later use, we record the explicit expressions
 %\begin{equation}
 %\label{non-espansive}
 %\rho_\eps(y) = \frac1{\eps}(y)^+, \qquad R_\eps (r): = (\mathrm{Id}+\rho_\eps)^{-1}(r) = -(r)^- + \frac{\eps}{\eps+1}(r)^+ \qquad \text{for every } r \in \R
 %\end{equation}
% and observe that $R_\eps: \R \to \R$ is nonexpansive.
\par
We are now in a position to give the  variational  formulation of the approximate problem.
\begin{problem}
\label{prob:approx}
Starting from initial data $(\uu_0,\chi_0)$ fulfilling \eqref{hyp:initial},
find a triple
%\footnote{\RRR problema di LaTeX: vorrei scrivere $\zeta$ in bold ma non ci riesco...}
 $(\uu,\zzeta,\chi)$ as in \eqref{reg-u}--\eqref{reg-chi}
such that $\uu(0)=\uu_0$, $\chi(0)=\chi_0$,
and fulfilling a.e.\ in (0,T)
\begin{subequations}
\label{PDE-approx}
\begin{align}
&  \label{eqU-approx}
\! \!\!\!\!\!\!
b(\uu_t,\mathbf{v}) + a(\uu,\mathbf{v}) +
 \int_{\GC} \chi\uu \mathbf{v} \dd x +
 \langle \zzeta,\mathbf{v} \rangle_{\bsY}
+  \int_{\GC} \chi\uu \nlocss{\chi} \mathbf{v} \dd x =
 \pairing{}{\bsV}{\mathbf{F}}{\vv} && \text{for all } \vv \in \bsV,
 \\
 &
 \label{zeta-approx}
 \! \!\!\!\!\!
% \! \!\!\!\!\!\!\!\!\!
% \! \!\!\!\!\!\!\!\!\!\!\!
 \zzeta \in \balpha(\uu) && \text{in } \bsY',
 \\
 &
 \label{eqCHI-eta}
 \! \!\!\!\!\!\!
% \! \!\!\!\!\!\!\!\!\!\!\!
 \chi_t + \rho_\eps(\chi_t) -\Delta\chi + \beta_\eps(\chi)+ \gamma'(\chi) = -\frac12 |\uu|^2 - \frac12 |\uu|^2 \nlocss{\chi}  -\frac12 \nlocss{\chi|\uu|^2} && \text{ a.e.\ on } \GC.
 \end{align}
\end{subequations}
\end{problem}
\begin{remark}
\label{rmk:features-reg-prob}
\upshape
On the one hand,
with these regularizations, we will able to render all a priori estimates rigorously within the frame of the approximate system. In particular, we refer to a \emph{regularity} estimate that  consists in
testing the  flow rule \eqref{eqCHI-eta} by $\partial_t (-\Delta\chi + \beta_\eps(\chi))$
 (cf.\ the proof of Proposition \ref{prop:loc-exis}). This estimate 
  allows us to bound both $-\Delta\chi$ and (a selection) $\xi \in \beta(\chi)$ in $L^\infty(0,T;H)$, thus giving rise to
\eqref{further-reg} (since the  estimates for $\chi_t$ and $\omega \in \rho(\chi_t)$ then follow by comparison in \eqref{eqCHI}).  On the other hand,
 since
$\mathrm{dom}(\beta_\eps)=\R$,
 in the approximate system \eqref{PDE-approx}
 the constraint $\chi \geq 0$ will be  no longer enforced.
 Because of this, the first a priori estimate performed  on the approximate system
  (cf.\ again
 the proof of Proposition \ref{prop:loc-exis})  will not have a \emph{global-in-time} character, and the procedure for extending a local solution to a global one, developed in Sec.\ \ref{s:5},  will be more complex than the standard one.
 \end{remark}
\subsection{Local-in-time existence for the approximate problem}
\label{ss:3.2}
This section is devoted to the proof of the following result.
\begin{proposition}
\label{prop:local}
Assume
\eqref{hyp:GC}, \eqref{ass-K}, \eqref{hyp-alpha}--\eqref{hyp-gamma},
and \eqref{hyp:forces}. Let $(\uu_0,\chi_0)$ be a pair of initial data fulfilling
\eqref{hyp:initial}.
Then,  there exists a final time $\widehat{T}>0$ such that, for every $\eps>0$,  Problem \ref{prob:approx} admits a unique   solution $(u,\zzeta,\chi)$ on $(0,\widehat{T})$,
 with the enhanced regularity $\chi \in L^\infty (0,\widehat{T};W) \cap H^1(0,\widehat{T};V) \cap W^{1,\infty}(0,\widehat{T};H)$. 
\end{proposition}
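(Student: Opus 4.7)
The plan is to construct a solution by a Schauder fixed-point argument decoupling the two equations, then to obtain uniqueness through a continuous-dependence estimate that exploits the Lipschitz regularity of $\rho_\eps,\beta_\eps,\gamma'$. Fix a radius $R>0$ and a time $\widehat{T}\in (0,T]$ to be chosen later, and let $K_R$ be a closed convex set of the form
$K_R=\{\bar\chi\in C^0([0,\widehat{T}];H)\cap L^\infty(0,\widehat{T};V)\ :\ \bar\chi(0)=\chi_0,\ \|\bar\chi\|\leq R\}$, with the topology of $C^0([0,\widehat{T}];H)$. The fixed-point map $\mathcal{T}:\bar\chi\mapsto\chi$ is built by composing two solvers.

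Given $\bar\chi\in K_R$, solve \eqref{eqU-approx}--\eqref{zeta-approx} with $\bar\chi$ in place of $\chi$. This is an evolution variational inequality driven by the continuous, symmetric, $\bsV$-elliptic form $a$ (cf.\ \eqref{korn}), the viscous form $b$, and the maximal monotone operator $\balpha=\partial\bwalpha$, with a zero-order perturbation $\uu\mapsto \bar\chi\uu(1+\nlocss{\bar\chi})$ that is a Carathéodory operator bounded on $L^2(\GC)$ thanks to $\bar\chi\in L^\infty(0,\widehat{T};L^4(\GC))$ (via \eqref{embedding}) and $\|\nlocss{\bar\chi}\|_{L^\infty}\lesssim \|\kr\|_{L^\infty}\|\bar\chi\|_H$ (via \eqref{hyp-k}). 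Standard theory of evolution variational inequalities (Brézis--Kenmochi type) yields a unique $\uu\in H^1(0,\widehat{T};\bsV)$ with $\uu(0)=\uu_0$ and a selection $\zzeta\in L^2(0,\widehat{T};\bsY')$.

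With $\uu$ in hand, solve \eqref{eqCHI-eta}: a semilinear parabolic equation on $\GC$ with homogeneous Neumann condition and globally Lipschitz nonlinearities $\rho_\eps,\beta_\eps,\gamma'$. The right-hand side $-\tfrac12|\uu|^2-\tfrac12|\uu|^2\nlocss{\chi}-\tfrac12\nlocss{\chi|\uu|^2}$ lies in $L^2(0,\widehat{T};H)$ once $\chi$ is sought in $K_R$, again by \eqref{embedding} and \eqref{hyp-k}. Standard parabolic theory (e.g.\ via a Faedo--Galerkin approximation, using the Lipschitzness of the nonlinearities) produces a unique $\chi\in L^2(0,\widehat{T};W)\cap L^\infty(0,\widehat{T};V)\cap H^1(0,\widehat{T};H)$ with $\chi(0)=\chi_0$. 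The enhanced regularity $\chi\in L^\infty(0,\widehat{T};W)\cap H^1(0,\widehat{T};V)\cap W^{1,\infty}(0,\widehat{T};H)$ is obtained by testing \eqref{eqCHI-eta} by $\partial_t(-\Delta\chi+\beta_\eps(\chi))$ at the level of the Galerkin scheme and passing to the limit; this is where \eqref{ini-chi} enters, since it provides $\chi_0\in W$ and a bound on $\|\beta_\eps(\chi_0)\|_H\leq \|\beta^0(\chi_0)\|_H$ uniform in $\eps$. Set $\mathcal{T}(\bar\chi):=\chi$.

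Energy estimates — testing \eqref{eqU-approx} by $\uu_t$ and \eqref{eqCHI-eta} by $\chi_t$ — show that, after choosing $R$ large relative to the data and $\widehat{T}$ small enough so that the $R$-dependent contributions from the nonlocal and $\chi|\uu|^2$ terms are absorbed, $\mathcal{T}$ maps $K_R$ into itself. The image of $K_R$ is bounded in $H^1(0,\widehat{T};V)\cap L^\infty(0,\widehat{T};W)$, hence relatively compact in $C^0([0,\widehat{T}];V)\hookrightarrow C^0([0,\widehat{T}];H)$ by Aubin--Lions. Continuity of $\mathcal{T}$ in the $C^0([0,\widehat{T}];H)$-topology follows from continuous dependence on the input of the two solvers (monotonicity of $\balpha$ for the $\uu$-step, Lipschitzness of $\rho_\eps,\beta_\eps,\gamma'$ for the $\chi$-step). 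Schauder's theorem yields a fixed point, which is the desired local solution. Uniqueness follows by subtracting two solutions, testing the $\uu$-difference by $(\uu_1-\uu_2)_t$ (the $\zzeta$-terms drop by the monotonicity of $\balpha$) and the $\chi$-difference by $(\chi_1-\chi_2)_t$, and applying Gronwall; the Lipschitz bounds on $\rho_\eps,\beta_\eps,\gamma'$ enter but do not obstruct the argument. The main obstacle is the enhanced $W$-regularity estimate, whose quantitative control governs the invariance of $K_R$ and forces the local-in-time nature of the statement.
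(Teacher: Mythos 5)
Your proposal follows essentially the same route as the paper's proof: you decouple the system into a momentum-balance solver with $\bar\chi$ frozen (a parabolic evolution variational inequality driven by $b$, $a$ and the maximal monotone $\balpha$) and a flow-rule solver with $\uu$ frozen (a doubly nonlinear, but Lipschitz-regularized, evolution for $\chi$ on $\GC$), and you compose the two into a Schauder operator on a ball of $\bar\chi$'s. The paper's Lemmas 3.2--3.4 do exactly this; the only structural difference is that the paper takes the fixed-point ball in $L^4(0,\widehat{T};H)$, while you use $C^0([0,\widehat{T}];H)\cap L^\infty(0,\widehat{T};V)$ with the uniform-in-time topology. Both choices lead to the same compactness via Aubin--Lions and to the same final-time restriction.

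Two points deserve a second look. First, your diagnosis that the \emph{enhanced} $W$-regularity estimate ``governs the invariance of $K_R$'' is not quite accurate: in the paper the ball is mapped into itself thanks to the \emph{basic} energy estimates \eqref{est-u-fix-pt}--\eqref{est-chi-fix-pt} alone (yielding an $H^1(0,\sfT;H)\cap L^\infty(0,\sfT;V)$-bound on $\mathcal{T}(\chi)$), and the time-smallness is then produced by the elementary inequality $\|\mathcal{T}(\chi)\|_{L^4(0,\sfT;H)}\leq C\,\sfT^{1/4}\|\mathcal{T}(\chi)\|_{L^\infty(0,\sfT;V)}$. The locality here is driven by the fact that the right-hand side of \eqref{est-Tchi} grows with the ball radius $M$ (because $\beta_\eps$ no longer confines $\chi$), not by the $W$-estimate. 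Second, in your uniqueness argument you propose testing the difference of the momentum balances by $\partial_t(\uu_1-\uu_2)$ and claim the $\zzeta$-terms drop ``by monotonicity of $\balpha$''. That reasoning is not available: monotonicity of $\balpha$ yields $\langle\zzeta_1-\zzeta_2,\uu_1-\uu_2\rangle_{\bsY}\geq 0$, but there is no sign on $\langle\zzeta_1-\zzeta_2,\partial_t(\uu_1-\uu_2)\rangle_{\bsY}$. The paper tests by $\uu_1-\uu_2$ (using the viscous form $b(\cdot,\cdot)$ to produce $\frac12\frac{d}{dt}b(\uu_1-\uu_2,\uu_1-\uu_2)$) precisely to exploit this sign; you should do the same, while keeping the $\partial_t(\chi_1-\chi_2)$ test for the flow-rule difference, where the monotonicity of $\rho_\eps$ applies.
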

We will prove
Proposition \ref{prop:local} by
constructing an operator, defined  between suitable function spaces,
whose fixed points yield solutions to system \eqref{PDE-approx}, and by showing that it does admit fixed points thanks to the Schauder theorem.
This procedure will yield a local-in-time solution to Problem \ref{prob:approx}  defined on the interval $(0,\widehat{T})$, with $\widehat{T}$  in fact \emph{independent} of $\varepsilon$, cf.\ Remark \ref{rmk:indep-eps} ahead. In view of this, in  Sec.\ \ref{ss:3.3} we will  pass to the limit in system \eqref{PDE-approx} as $\eps \downarrow 0$ and obtain the existence of local-in-time solutions to the original Problem \ref{prob:PDE}.
\par
Our construction  of the Schauder operator will be based on  two  results,
Lemma \ref{l:solvability-u}  and Lemma \ref{l:solvability-chi} ahead,
tackling the separate solvability,   on a given interval $(0,\sfT)$,
of the momentum balance \eqref{eqU-approx}, with  $\chi$ replaced by a \emph{given} function $\overline{\chi}\in  L^4(0,\sfT;H)$, and of the flow rule \eqref{eqCHI-eta} for the adhesive parameter, with (the trace of) $\uu$ replaced by a given  $\overline{\uu} \in H^1(0,\sfT;L^4(\GC)) $. 
Preliminarily, we fix the properties of the nonlocal operator $\nlname$ from \eqref{nl-op-intro}  in the following result, whose proof is left to the reader.
%\footnote{\RRR Mi sembra che non valga la pena di dettagliarla..}
\begin{lemma}\label{lemmaK}
Assume  \eqref{hyp-k}. Then, $\nlname$  is well defined, linear and continuous from $L^1(\GC)$ to $L^\infty(\GC)$,
with
\begin{equation}
\label{stima-puntuale}
|\nlocs w x | \leq \| k(x,\cdot)\|_{L^\infty(\GC)} \|w\|_{L^1(\GC)} \  \text{for a.a. } x \in \GC,  \text{ so that }  \| \nlocss w\|_{L^\infty(\GC)} \leq \| k \|_{L^\infty(\GC\times \GC)}  \|w\|_{L^1(\GC)}\,.
\end{equation}
 Furthermore,  for every $1\leq p<\infty$  the operator $\nlname$ is
 continuous from $L^1(\GC)$, equipped with the weak topology, to  $L^p(\GC)$  with the strong topology 
  %\emph{weakly \GGG strongly \EEE continuous} from to $L^p(\GC)$ for every.
   (i.e.\  if $w_n\debole w$ in $L^1(\GC)$ then $\nlocss{w_n}\rightarrow \nlocss{w}$ in $L^p(\GC)$).  Finally, there holds
\begin{equation}
\label{nl-symm}
\int_{\GC} \nlocs {w_1}x \, w_2(x) \dd  x = \int_{\GC} \nlocs {w_2}x \, w_1(x) \dd x \qquad \text{for all } w_1,\, w_2 \in L^1(\GC)\,.
\end{equation}
\end{lemma}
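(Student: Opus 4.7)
The lemma collects three properties of the linear operator $\nlname$, each of which follows from standard tools (Tonelli/Fubini, dominated convergence, Banach--Steinhaus) once the hypothesis $k\in L^\infty(\GC{\times}\GC)$ and the finiteness of $\mathcal{H}^2(\GC)$ are exploited. I would organize the proof in three short steps corresponding to the three assertions.

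\medskip
\noindent\textbf{Step 1: well-posedness, linearity and the $L^1\to L^\infty$ bound.}
Since $k\in L^\infty(\GC{\times}\GC)$, for $\mathcal{H}^2$-a.a.\ $x\in\GC$ the section $k(x,\cdot)$ belongs to $L^\infty(\GC)$; hence for every $w\in L^1(\GC)$ the integral defining $\nlocs w x$ is absolutely convergent and satisfies
\[
|\nlocs w x|\;\leq\;\|k(x,\cdot)\|_{L^\infty(\GC)}\,\|w\|_{L^1(\GC)}
\;\leq\;\|k\|_{L^\infty(\GC{\times}\GC)}\,\|w\|_{L^1(\GC)}.
\]
Measurability of $x\mapsto\nlocs w x$ is a direct consequence of Tonelli's theorem applied to the measurable (and integrable, since $\GC$ has finite measure) map $(x,y)\mapsto k(x,y)w(y)$. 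Linearity is immediate from linearity of the integral, and the pointwise estimate yields both inequalities in the statement.

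\medskip
\noindent\textbf{Step 2: weak $L^1$-to-strong $L^p$ continuity.}
Let $w_n\weak w$ in $L^1(\GC)$. For $\mathcal{H}^2$-a.a.\ fixed $x\in\GC$ the function $k(x,\cdot)$ lies in $L^\infty(\GC)$, which is the dual of $L^1(\GC)$; therefore the very definition of weak convergence gives
\[
\nlocs{w_n}x=\int_{\GC}k(x,y)w_n(y)\dd y\;\longrightarrow\;\int_{\GC}k(x,y)w(y)\dd y=\nlocs w x
\qquad\text{as }n\to\infty.
\]
By the Banach--Steinhaus theorem the sequence $(w_n)$ is bounded in $L^1(\GC)$, say $\|w_n\|_{L^1(\GC)}\leq M$, so that Step~1 produces the uniform pointwise bound
\[
|\nlocs{w_n}x|^{p}\;\leq\;\bigl(\|k\|_{L^\infty(\GC\times\GC)}M\bigr)^{p}=:C_p
\qquad\text{for a.a.\ }x\in\GC.
\]
Since $\GC$ has finite Hausdorff measure, the constant $C_p$ is integrable on $\GC$, and Lebesgue's dominated convergence theorem yields $\nlocss{w_n}\to\nlocss w$ in $L^p(\GC)$ for every $1\leq p<\infty$.

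\medskip
\noindent\textbf{Step 3: symmetry identity.}
For $w_1,w_2\in L^1(\GC)$ the bound of Step~1 together with $|w_2|\in L^1(\GC)$ shows that $(x,y)\mapsto k(x,y)w_1(y)w_2(x)$ is integrable on $\GC\times\GC$. Fubini's theorem then allows one to exchange the order of integration, and symmetry of $k$ (i.e.\ $k(x,y)=k(y,x)$) gives
\[
\int_{\GC}\nlocs{w_1}x\,w_2(x)\dd x
=\int_{\GC}\!\!\int_{\GC} k(x,y)w_1(y)w_2(x)\dd y\dd x
=\int_{\GC}\!\!\int_{\GC} k(y,x)w_2(x)w_1(y)\dd x\dd y
=\int_{\GC}\nlocs{w_2}y\,w_1(y)\dd y,
\]
as required. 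None of the three steps presents a genuine obstacle; the only point requiring mild care is the use of Banach--Steinhaus in Step~2 to secure a dominating constant before invoking dominated convergence.
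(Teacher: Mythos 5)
The paper leaves the proof of Lemma~\ref{lemmaK} to the reader, so there is no in-text argument to compare against; your three-step proof is correct and is certainly the standard argument the authors had in mind. The pointwise bound, the combination of fixed-$x$ weak-$L^1$ convergence against $k(x,\cdot)\in L^\infty(\GC)$ with Banach--Steinhaus and dominated convergence (valid since $\mathcal{H}^2(\GC)<\infty$), and the Fubini-plus-symmetry computation are all the right ingredients, correctly assembled.
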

%%%%
\par
We start by tackling
the momentum balance \eqref{eqU-approx}.   In what follows we will denote by $
Q_i$ and $\widetilde{Q}_i$, $i=1,\ldots$, computable,  non-negative and continuous functions, monotone increasing w.r.t.\ each of their variables, that will enter in the a priori estimates holding for the solutions to the momentum balance/adhesive flow rule.
\begin{lemma}
\label{l:solvability-u}
Assume
\eqref{hyp:GC},
 \eqref{ass-K}, \eqref{hyp-alpha},   \eqref{hyp-k},
and \eqref{hyp:forces}. Let $\uu_0$ fulfill \eqref{ini-u}.
Then, for every $\overline{\chi}\in L^4(0,\sfT;H)$ there exists a unique  pair $(\uu, \zzeta) \in H^1(0,\sfT;\bsV)\times L^2(0,\sfT;\bsY')$  fulfilling $\uu(0) = \uu_0$ and
\begin{equation}
\label{eq-u-fx-point}
b(\uu_t,\mathbf{v}) + a(\uu,\mathbf{v}) +
 \int_{\GC} \overline{\chi}\uu \mathbf{v} \dd x +
 \langle \zzeta,\mathbf{v} \rangle_{\bsY}
+  \int_{\GC} \overline{\chi}\uu \nlocss{\overline{\chi}} \mathbf{v} \dd x =
 \pairing{}{\bsV}{\mathbf{F}}{\vv} \quad \text{for all } \vv \in \bsV
 \end{equation}
 for a.a. $t \in (0,\sfT),$ with
  \begin{equation}
 \label{est-u-fix-pt}
 \| \uu\|_{H^1(0,\sfT;\bsV)} \leq Q_1 (\|\uu_0\|_{\bsV},  \widehat\balpha(\uu_0),  \|\mathbf{F}\|_{L^2(0,\sfT;\bsV')},\|\overline{\chi}\|_{L^4(0,\sfT;H)})\,.
 \end{equation}
Furthermore, there exists a positive function $\widetilde{Q}_1$ such that for every $\overline{\chi}_1\,, \overline{\chi}_2 \in  L^4(0,\sfT;H)$, with $\uu_1,\,\uu_2$ the associated solutions of \eqref{eq-u-fx-point} starting from $\uu_0$, there holds
\begin{equation}
\label{est-u-cont-dep}
 \|\uu_1-\uu_2\|_{L^\infty(0,\sfT;\bsV)}  \leq
 \widetilde{Q}_1\left(
 \max_{i=1,2} \|\uu_i\|_{L^\infty(0,\sfT;L^4(\GC))},  \max_{i=1,2} \|\chi_i\|_{L^4(0,\sfT;H)}\right)  \|\overline{\chi}_1 - \overline{\chi}_2 \|_{L^4(0,\sfT;H)}
  \,.
 \end{equation}
\end{lemma}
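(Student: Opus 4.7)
I would view \eqref{eq-u-fx-point} as the abstract Cauchy problem
\[
B\uu_t + A\uu + \balpha(\uu) + C(t)\uu \ni \mathbf{F}(t),\quad \uu(0)=\uu_0,
\]
in $\bsV'$, where $B,A:\bsV\to\bsV'$ are the bounded operators induced by $b(\cdot,\cdot)$ and $a(\cdot,\cdot)$ (with $B$ coercive by \eqref{korn}), $\balpha=\partial\bwalpha$ is maximal monotone as a graph in $\bsV\times\bsV'$ via the continuity of the trace $\bsV\to\bsY$, and $C(t)\in\mathcal{L}(\bsV,\bsV')$ is defined by $\langle C(t)\uu,\vv\rangle=\int_{\GC}\overline\chi(t)(1+\nlocss{\overline\chi(t)})\uu\cdot\vv\,\dd x$. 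Lemma \ref{lemmaK} combined with $|\GC|<\infty$ gives $\|\nlocss{\overline\chi(t)}\|_{L^\infty(\GC)}\leq c\|\overline\chi(t)\|_H$, and the embedding $\bsV\subset L^4(\GC)$ from \eqref{embedding} yields
\[
|\langle C(t)\uu,\vv\rangle|\leq c\bigl(\|\overline\chi(t)\|_H+\|\overline\chi(t)\|_H^2\bigr)\|\uu\|_\bsV\|\vv\|_\bsV,
\]
with the prefactor in $L^2(0,\sfT)$ since $\overline\chi\in L^4(0,\sfT;H)$. Existence and uniqueness of $(\uu,\zzeta)\in H^1(0,\sfT;\bsV)\times L^2(0,\sfT;\bsY')$ with $\zzeta\in\balpha(\uu)$ and $\uu(0)=\uu_0\in\mathrm{dom}(\bwalpha)$ then follow from the theory of parabolic variational inequalities with bounded time-dependent perturbations; alternatively, one constructs the solution by implicit Euler discretization whose single-step problem is the minimization of a strongly convex functional on $\bsV$ for the step $\tau$ small enough (so that the viscous quadratic term dominates the possibly non-convex $C(t)$-perturbation).

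\textbf{A priori estimate.} Testing \eqref{eq-u-fx-point} by $\uu_t$ one has $b(\uu_t,\uu_t)\geq C_b\|\uu_t\|_\bsV^2$ by Korn, $a(\uu,\uu_t)=\tfrac12\tfrac{d}{dt}a(\uu,\uu)$ by symmetry, and by the chain rule for the proper convex l.s.c.\ functional $\bwalpha$ along the absolutely continuous curve $\uu\in H^1(0,\sfT;\bsV)$, $\langle\zzeta,\uu_t\rangle_\bsY=\tfrac{d}{dt}\bwalpha(\uu)$. The $\overline\chi$-dependent contributions are absorbed via
\[
\left|\int_{\GC}\overline\chi(1+\nlocss{\overline\chi})\uu\cdot\uu_t\,\dd x\right|\leq \tfrac{C_b}{2}\|\uu_t\|_\bsV^2+c\bigl(\|\overline\chi\|_H^2+\|\overline\chi\|_H^4\bigr)\|\uu\|_\bsV^2,
\]
whose last prefactor lies in $L^1(0,\sfT)$ thanks to $\overline\chi\in L^4(0,\sfT;H)$. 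Integrating in time and applying Gronwall's lemma yields the $L^\infty(0,\sfT;\bsV)\cap H^1(0,\sfT;\bsV)$ bound on $\uu$ together with $\bwalpha(\uu)\in L^\infty(0,\sfT)$. The $\bsY'$-estimate on $\zzeta$ follows by comparison in \eqref{eq-u-fx-point}: for arbitrary $\mathbf{w}\in\bsY$ one picks an extension $\tilde{\mathbf{w}}\in\bsV$ with $\|\tilde{\mathbf{w}}\|_\bsV\leq c\|\mathbf{w}\|_\bsY$ (exploiting \eqref{def-space}) and bounds $\langle\zzeta,\mathbf{w}\rangle_\bsY$ using the already-controlled norms of $\mathbf{F}$, $B\uu_t$, $A\uu$, $C(t)\uu$ in $\bsV'$.

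\textbf{Continuous dependence and main obstacle.} Writing \eqref{eq-u-fx-point} for $(\uu_i,\zzeta_i)$ associated with $\overline\chi_i$, $i=1,2$, subtracting, and testing by $\uu_1-\uu_2$, the monotonicity inequality $\langle\zzeta_1-\zzeta_2,\uu_1-\uu_2\rangle_\bsY\geq 0$ eliminates the subdifferential. Decomposing
\[
\overline\chi_1\uu_1(1+\nlocss{\overline\chi_1})-\overline\chi_2\uu_2(1+\nlocss{\overline\chi_2})=\overline\chi_1(1+\nlocss{\overline\chi_1})(\uu_1-\uu_2)+(\overline\chi_1-\overline\chi_2)(1+\nlocss{\overline\chi_1})\uu_2+\overline\chi_2\uu_2\nlocss{\overline\chi_1-\overline\chi_2}
\]
and estimating each piece by H\"older's inequality with $\bsV\subset L^4(\GC)$ and Lemma \ref{lemmaK}, then Young's inequality and the coercivity $b(\cdot,\cdot)\geq C_b\|\cdot\|_\bsV^2$, one arrives at a Gronwall-type differential inequality for $\|\uu_1-\uu_2\|_\bsV^2$ with integrable-in-time prefactor and source proportional to $\|\overline\chi_1-\overline\chi_2\|_H^2\|\uu_2\|_{L^4(\GC)}^2(1+\|\overline\chi_1\|_H^2+\|\overline\chi_2\|_H^2)$; integration and H\"older in time then give \eqref{est-u-cont-dep}. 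The main obstacle is the absence of a sign constraint on $\overline\chi$: the term $\tfrac12\int_{\GC}\overline\chi(1+\nlocss{\overline\chi})|\uu|^2$ cannot be absorbed into a convex energy and must be carried as a lower-order perturbation controlled purely through the $L^4(0,\sfT;H)$ regularity of $\overline\chi$. In addition, the coupling $\nlocss{\overline\chi_1-\overline\chi_2}$ arising in the continuous dependence step breaks the naive pointwise symmetry and is tamed only via the $L^1\to L^\infty$ continuity of $\nlname$ from Lemma \ref{lemmaK}.
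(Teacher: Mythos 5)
Your proposal follows the paper's argument almost step for step: existence via time discretization (the paper cites \cite[Prop.\ 4.2]{BBR1}, which is itself implicit-Euler based), the a priori bound from testing by $\uu_t$ together with the chain rule for $\bwalpha$, Korn, the $L^1\to L^\infty$ bound \eqref{stima-puntuale} on $\nlname$, Young, and Gronwall, and the continuous dependence from subtracting, testing by $\uu_1-\uu_2$, using the monotonicity of $\balpha$, and the same three-term decomposition of $\overline\chi_1\uu_1(1+\nlocss{\overline\chi_1})-\overline\chi_2\uu_2(1+\nlocss{\overline\chi_2})$ that the paper encodes in \eqref{quoted4cont-dep}. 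The only cosmetic additions are the explicit lifting argument for the $\bsY'$-bound on $\zzeta$ (the paper calls this ``comparison'') and some remarks on the abstract operator setting, neither of which changes the route.
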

We denote by
\begin{equation}
\label{sol-op-chi}
\mathcal{T}_1:  L^4(0,\sfT;H) \to  H^1(0,\sfT;\bsV) \quad \text{the solution operator associated with  \eqref{eq-u-fx-point}}.
\end{equation}
\begin{proof}
A standard modification
%\footnote{\RRR infatti, in \cite{BBR1} partivamo da una $\overline\chi \in L^2(0,\sfT;L^p(\GC))$ con $p\in (2,4)$. Questo veniva usato solo nella dimostrazione della convergenza *forte* di $\chi_k \uu_k$ (le soluzioni approssimate), che usavamo per passare al limite nell'equazione degli spostamenti. Ma per fare ci\`o dovrebbe bastare la convergenza debole di questo prodotto..}
 of the proof of \cite[Prop.\ 4.2]{BBR1},  which is  in turn based on a  time-discretization procedure, yields the existence statement. Estimate \eqref{est-u-fix-pt} follows from testing \eqref{eq-u-fx-point} by $\uu_t$ and integrating on a time interval $(0,t)$,
which gives
\begin{equation}
\label{mom-bal-enid}
\begin{aligned}
&
\int_0^t b(\uu_t,\uu_t) \dd r + \frac12 a(\uu(t),\uu(t)) + \bvarphi(\uu(t))  \\ &  =  \frac12 a(\uu_0,\uu_0) + \bvarphi(\uu_0) +  \int_0^t  \pairing{}{\bsV}{\mathbf{F}}{\uu_t}  \dd r -  \int_0^t \int_{\GC} \overline{\chi} \uu \uu_t \dd x \dd r -    \int_0^t  \int_{\GC} \overline{\chi} \uu \nlocss{\overline\chi} \uu_t \dd x \dd r
\end{aligned}
\end{equation}
thanks to the chain-rule formula
\begin{equation}
\label{ch-rule-alpha}
\int_0^t \langle \zzeta, \uu_t  \rangle_{\bsY} \dd r  = \widehat\balpha(\uu(t)) - \widehat\balpha(\uu_0)\,.
\end{equation}
In view of \eqref{korn} and of the positivity of $\bvarphi$,
we have that
\[
\text{left-hand side of \eqref{mom-bal-enid}} \geq C_b \int_0^t \| \uu_t\|_{\bsV}^2 \dd r + \frac{C_a}{2} \| \uu(t)\|_{\bsV}^2\,.
\]
Moreover, applying the Young inequality, we get
\begin{equation}
\label{added-4-later}
\begin{aligned}
&
\left| \int_0^t  \pairing{}{\bsV}{\mathbf{F}}{\uu_t}\dd r  \right|
\leq \frac{C_b}4\int_0^t \|\uu_t\|_{\bsV}^2 \dd r + C \|\mathbf{F}\|_{L^2(0,\sfT;\bsV')}^2.
\end{aligned}
\end{equation}
 We then perform  the following estimates for the last two terms on the right-hand side of
\eqref{mom-bal-enid}
\[
\begin{aligned}
&
\left| \int_0^t \int_{\GC} \overline{\chi}\uu \uu_t \dd x  \dd r  \right|
\leq \frac{C_b}4\int_0^t \|\uu_t\|_{\bsV}^2 \dd r + C \int_0^t \|\overline\chi\|_{H}^2 \|\uu\|_{\bsV}^2 \dd r,
\\
&
\left| \int_0^t  \int_{\GC} \overline{\chi}\uu \nlocss{\overline{\chi}} \mathbf{u}_t \dd x  \dd r \right| \leq \frac{C_b}4\int_0^t \|\uu_t\|_{\bsV}^2 \dd r + C \int_0^t \|\overline\chi\|_{H}^4 \|\uu\|_{\bsV}^2 \dd r,
\end{aligned}
\]
where we have used \eqref{embedding}, %\footnote{$\bsV\subset L^4(\GC)$},
as well as estimate \eqref{stima-puntuale}, yielding
\begin{equation}
\label{nonloc-later}
 \| \nlocss{\chi}\|_{L^\infty(\GC)} \leq C \|\chi\|_{L^1(\GC)} \leq C \|\chi\|_{H}.
\end{equation}
 %$\| \nlocss{\overline{\chi}} \|_{L^\infty(\GC)}$.
All in all, we obtain
\[
\frac{C_b}4\int_0^t \|\uu_t\|_{\bsV}^2 \dd r  + \frac{C_a}2 \|\uu(t)\|_{\bsV}^2
\leq   C\left(\|\uu_0\|_{\bsV}^2 + \widehat\balpha(\uu_0)+ \|\mathbf{F}\|_{L^2(0,\sfT;\bsV')}^2 + \int_0^t \|\overline\chi\|_{H}^2 \|\uu\|_{\bsV}^2 \dd r +\int_0^t \|\overline\chi\|_{H}^4 \|\uu\|_{\bsV}^2 \dd r \right),
\]
whence \eqref{est-u-fix-pt} by the Gronwall Lemma.
\par
In order to show the continuous dependence estimate \eqref{est-u-cont-dep}, we subtract \eqref{eq-u-fx-point} for given $\overline{\chi}_2$ from \eqref{eq-u-fx-point} for  $\overline{\chi}_1$ and test the resulting equation by $\uu_1 -\uu_2$. With calculations similar to those above, we  obtain,   by virtue of \eqref{stima-puntuale} and  the Young inequality,
%as well as Lemma \ref{lemmaK},
 that
\begin{equation}
\label{quoted4cont-dep}
\begin{aligned}
&
\frac{C_b}2 \| (\uu_1 -\uu_2)(t)\|_{\bsV}^2 +\frac{C_a}2 \int_0^t \| \uu_1-\uu_2\|_{\bsV}^2 \dd r
\\&  \leq
 \frac{C_a}4 \int_0^t \| \uu_1-\uu_2\|_{\bsV}^2 \dd r   + C  \int_0^t \left(1  +  \|\overline{\chi}_1 \|_{H}+  \|\overline{\chi}_1 \|_{H}^2\right)  \|  \uu_1-\uu_2\|_{L^4(\GC)}^2 \dd r
 \\ & \qquad \qquad
 + C
 \int_0^t \| \uu_2\|_{L^4(\GC)}^2 (1+  \|\overline{\chi}_1 \|_{H}^2 + \|\overline{\chi}_2 \|_{H}^2)    \|\overline{\chi}_1-\overline{\chi}_2 \|_H^2 \dd r\,,
 \end{aligned}
\end{equation}
whence \eqref{est-u-cont-dep}.
%\footnote{\RRR per favore, ricontrollatela, a me sembra che venga cosi'..} by the Gronwall Lemma and Sobolev's embeddings.
  Finally, a comparison in \eqref{eq-u-fx-point}  yields the uniqueness of $\zzeta$, too.
\end{proof}
\par
We now address the flow rule
\eqref{eqCHI-eta}
for the adhesion parameter.
\begin{lemma}
\label{l:solvability-chi}
Assume
\eqref{hyp:GC},
  \eqref{hyp-rho}--\eqref{hyp-gamma}. Let $\chi_0$ fulfill \eqref{ini-chi}. Then, for every  $\overline{u} \in H^1(0,\sfT;L^4(\GC;\R^3))$ 
  there exists a unique   $\chi\in L^\infty(0,\sfT;W) \cap H^1(0,\sfT;V) \cap  W^{1,\infty}(0,\sfT;H)$  fulfilling $\chi(0)=\chi_0$ and
\begin{equation}
\label{flow-rule-u-fixed}
 \chi_t + \rho_\eps(\chi_t) -\Delta\chi+ \beta_\eps(\chi)
  + \gamma'(\chi) = -\frac12 |\overline\uu|^2 - \frac12 |\overline\uu|^2 \nlocss{\chi}  -\frac12 \nlocss{\chi|\overline\uu|^2}  \qquad \text{a.e.\ on } \   \GC\times (0,\sfT),
 \end{equation}
 with
  \begin{equation}
 \label{est-chi-fix-pt}
 \|\chi\|_{L^2(0,\sfT;W) \cap L^\infty(0,\sfT;V) \cap H^1(0,\sfT;H)} \leq Q_2 (\|\chi_0\|_{V}, \|\widehat{\beta}(\chi_0)\|_{L^1(\GC)},
 \|\overline{\uu}\|_{L^4(0,\sfT;L^4(\GC))})\,.
 \end{equation}
 Furthermore, there exists a function $\widetilde{Q}_2$ such that for every $\overline{\uu}_1,\, \overline{\uu}_2 \in  L^4(0,\sfT;L^4(\GC;\R^3))$, with $\chi_1,\, \chi_2$ the associated solutions to \eqref{flow-rule-u-fixed} emanating from the same initial datum $\chi_0$, there holds
 \begin{equation}
 \label{est-chi-cont-depen}
   \|\chi_1-\chi_2\|_{L^\infty(0,\sfT;V) \cap H^1(0,\sfT;H)}  \leq
 \widetilde{Q}_2\left( \eps^{-1},
 \max_{i=1,2} \|\overline{\uu}_i\|_{L^4(0,\sfT;L^4(\GC))},  \max_{i=1,2} \|\chi_i\|_{L^\infty(0,\sfT;H)}\right)   \|\overline{\uu}_1 - \overline{\uu}_2 \|_{L^2(0,\sfT;L^4(\GC))}\,. 
  \end{equation}
\end{lemma}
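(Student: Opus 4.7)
My plan is to establish existence via a Faedo--Galerkin discretization in the eigenfunctions of the Laplace--Neumann operator on $\GC$, then derive two layers of a priori estimates, and finally obtain uniqueness/continuous dependence by a subtraction and Gronwall argument. The key simplifying fact is that, since $\rho_\eps$ and $\beta_\eps$ are globally Lipschitz (with constants of order $\eps^{-1}$) and $\gamma'$ is Lipschitz by \eqref{hyp-gamma}, the equation can be treated as a semilinear parabolic problem; moreover, by Lemma \ref{lemmaK}, the map $\chi \mapsto -\tfrac12|\overline\uu|^2-\tfrac12|\overline\uu|^2\nlocss\chi-\tfrac12\nlocss{\chi|\overline\uu|^2}$ is globally Lipschitz from $L^2(0,\sfT;H)$ into itself with constant controlled by $\|\overline\uu\|_{L^4(0,\sfT;L^4(\GC))}^2$. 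A standard Cauchy--Lipschitz argument at the finite-dimensional level yields approximating solutions $\chi_n$, and the a priori estimates described below, independent of $n$, allow the passage to the limit via compactness and the Minty trick (monotonicity of $\rho_\eps, \beta_\eps$ is not needed, since they are Lipschitz single-valued).

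The first a priori estimate is obtained by testing \eqref{flow-rule-u-fixed} with $\chi_t$. Using $\rho_\eps(\chi_t)\chi_t\ge 0$ (since $\rho_\eps$ is monotone with $\rho_\eps(0)=0$), the chain rule for $\widehat\beta_\eps(\chi)$ and $\gamma(\chi)$, and integration by parts in the Laplacian term, one gets
\[
\|\chi_t\|_H^2 + \frac{d}{dt}\!\left(\tfrac12\|\nabla\chi\|_H^2 + \int_{\GC}\widehat\beta_\eps(\chi) + \int_{\GC}\gamma(\chi)\right) \le \int_{\GC} G\,\chi_t,
\]
where $G$ is the right-hand side of \eqref{flow-rule-u-fixed}. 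The three contributions to $\int G\chi_t$ are controlled by Young's inequality together with the pointwise bound $\|\nlocss\chi\|_{L^\infty(\GC)}\le C\|\chi\|_H$ from \eqref{stima-puntuale} and the embedding $H\hookrightarrow L^1(\GC)$, yielding terms of the form $\tfrac14\|\chi_t\|_H^2 + C(1{+}\|\chi\|_H^2)\|\overline\uu\|_{L^4(\GC)}^4$; absorbing $\gamma$ via its quadratic lower bound and applying Gronwall gives \eqref{est-chi-fix-pt} at the level $\chi\in H^1(0,\sfT;H)\cap L^\infty(0,\sfT;V)$, while elliptic regularity (after a comparison yielding $-\Delta\chi\in L^2(0,\sfT;H)$) delivers the $L^2(0,\sfT;W)$ bound.

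The main obstacle, and the core of the proof, is the enhanced estimate of \cite{bfl} which yields the regularity $\chi\in L^\infty(0,\sfT;W)\cap H^1(0,\sfT;V)\cap W^{1,\infty}(0,\sfT;H)$. I would test \eqref{flow-rule-u-fixed} by $\partial_t(-\Delta\chi+\beta_\eps(\chi))$. The crucial observations are: (i) $\int \chi_t\,\partial_t(-\Delta\chi+\beta_\eps(\chi)) = \|\nabla\chi_t\|_H^2+\int\beta_\eps'(\chi)|\chi_t|^2\ge 0$ after integration by parts (the Neumann boundary conditions on $\chi$, hence on $\chi_t$, kill the boundary term); (ii) $\int\rho_\eps(\chi_t)\,\partial_t(-\Delta\chi+\beta_\eps(\chi)) = \int\rho_\eps'(\chi_t)|\nabla\chi_t|^2 + \int\rho_\eps(\chi_t)\beta_\eps'(\chi)\chi_t \ge 0$ by monotonicity of $\rho_\eps,\beta_\eps$; (iii) $\int(-\Delta\chi+\beta_\eps(\chi))\,\partial_t(-\Delta\chi+\beta_\eps(\chi)) = \tfrac12\tfrac{d}{dt}\|-\Delta\chi+\beta_\eps(\chi)\|_H^2$. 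The $\gamma'(\chi)$ and $G$ contributions are controlled by splitting $\partial_t(-\Delta\chi+\beta_\eps(\chi)) = -\Delta\chi_t + \beta_\eps'(\chi)\chi_t$, using integration by parts for the Laplacian part against $\gamma'$ and $G$ to produce $\int \gamma''(\chi)\nabla\chi\cdot\nabla\chi_t$ and $\int\nabla G\cdot\nabla\chi_t$ (bounded via $\overline\uu\in H^1(0,\sfT;L^4)$), and reabsorbing half of $\|\nabla\chi_t\|_H^2$. Gronwall then yields $\|-\Delta\chi+\beta_\eps(\chi)\|_{L^\infty(0,\sfT;H)}$ and $\|\nabla\chi_t\|_{L^2(0,\sfT;H)}$. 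Finally, the key monotonicity identity
\[
\|-\Delta\chi+\beta_\eps(\chi)\|_H^2 = \|\Delta\chi\|_H^2 + \|\beta_\eps(\chi)\|_H^2 + 2\int_{\GC}\beta_\eps'(\chi)|\nabla\chi|^2
\]
(from integration by parts, using Neumann boundary conditions and monotonicity of $\beta_\eps$) separates the two contributions, giving $-\Delta\chi,\beta_\eps(\chi)\in L^\infty(0,\sfT;H)$ and hence, by comparison and Lipschitzianity of $(I+\rho_\eps)^{-1}$, $\chi_t\in L^\infty(0,\sfT;H)$ and $\chi\in L^\infty(0,\sfT;W)$ via elliptic regularity.

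For uniqueness and \eqref{est-chi-cont-depen}, I would subtract the equations for $\chi_1$ and $\chi_2$ and test the difference with $(\chi_1-\chi_2)_t$. The Lipschitz constants of $\rho_\eps,\beta_\eps,\gamma'$ produce the prefactor $\widetilde Q_2(\eps^{-1},\ldots)$; the right-hand-side difference is estimated in $H$ using Lemma \ref{lemmaK} together with $\|\overline\uu_1\|_{L^4(\GC)}+\|\overline\uu_2\|_{L^4(\GC)}$ and $\|\chi_i\|_{L^\infty(0,\sfT;H)}$ (already controlled by the previous step), producing terms like $\|\overline\uu_1-\overline\uu_2\|_{L^4(\GC)}(\|\overline\uu_1\|_{L^4(\GC)}+\|\overline\uu_2\|_{L^4(\GC)})(1+\|\chi_i\|_H)$. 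Applying Young and Gronwall then yields \eqref{est-chi-cont-depen}; in particular, choosing $\overline\uu_1=\overline\uu_2$ gives uniqueness.
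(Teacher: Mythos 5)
Your overall architecture matches the paper's: test the flow rule by $\chi_t$ for the basic estimate \eqref{est-chi-fix-pt}, test by $\partial_t(-\Delta\chi+\beta_\eps(\chi))$ for the enhanced regularity, and subtract-and-test-by-the-time-derivative-of-the-difference for \eqref{est-chi-cont-depen}. Your existence mechanism (Faedo--Galerkin plus Cauchy--Lipschitz for the finite-dimensional implicit ODE, exploiting invertibility of $\mathrm{Id}+\rho_\eps$ on each Galerkin subspace) is a legitimate alternative to the paper's appeal to the abstract doubly nonlinear result \cite[Thm.\ 2.2]{MRS}; this is a fair trade of self-containedness for length.

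There is, however, a genuine gap in your treatment of the enhanced regularity estimate, precisely where you handle the right-hand side $G = -\tfrac12|\overline\uu|^2-\tfrac12|\overline\uu|^2\nlocss\chi-\tfrac12\nlocss{\chi|\overline\uu|^2}$. You split $\partial_t(-\Delta\chi+\beta_\eps(\chi)) = -\Delta\chi_t+\beta_\eps'(\chi)\chi_t$ and integrate by parts \emph{in space} against $G$, claiming the resulting term $\int_{\GC}\nabla G\cdot\nabla\chi_t$ is ``bounded via $\overline\uu\in H^1(0,\sfT;L^4)$.'' But $H^1(0,\sfT;L^4(\GC;\R^3))$ controls only the \emph{time} derivative of $\overline\uu$; no spatial regularity of $\overline\uu$ is assumed, so $\nabla(|\overline\uu|^2)$ is not a well-defined object in any useful space. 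The same obstruction appears in $\nabla_x\nlocs{\chi|\overline\uu|^2}{x}=\int_{\GC}\nabla_x k(x,y)\chi(y)|\overline\uu(y)|^2\,\dd y$, since \eqref{hyp-k} imposes only $k\in L^\infty(\GC\times\GC)$ and no spatial smoothness. The paper's proof (carried out in detail inside the proof of Proposition~\ref{prop:loc-exis}, estimates of $I_2$ and $I_3$ there) integrates by parts \emph{in time} instead: this replaces $\partial_t(-\Delta\chi+\beta_\eps(\chi))$ tested against $G$ by terms of the form $\int_0^t\!\int_{\GC}\uu\cdot\partial_t\uu\,(\cdots)\,(-\Delta\chi+\beta_\eps(\chi))$ and $\int_0^t\!\int_{\GC}|\uu|^2\nlocss{\partial_t\chi}\,(-\Delta\chi+\beta_\eps(\chi))$, plus evaluations at the time endpoints, all of which are controlled precisely by the temporal regularity $\overline\uu\in H^1(0,\sfT;L^4(\GC;\R^3))$. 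This is the reason that hypothesis is stated in the time-Sobolev form. A secondary, smaller issue: your splitting also leaves the term $\int_{\GC}\gamma'(\chi)\beta_\eps'(\chi)\chi_t$, which carries a factor $\beta_\eps'\sim\eps^{-1}$ and thus destroys the $\eps$-uniformity that the paper preserves by integrating the $\gamma'$-contribution by parts in time (estimate of $I_1$ in the proof of Proposition~\ref{prop:loc-exis}). For Lemma~\ref{l:solvability-chi} alone the $\eps$-dependence is tolerable, but it would sabotage the subsequent limit $\eps\downarrow0$ in Proposition~\ref{prop:loc-exis}, so it is worth adopting the paper's time-integration-by-parts throughout.
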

We denote by
\begin{equation}
\label{sol-op-u}
\begin{gathered}
 \mathcal{T}_2:  H^1(0,\sfT;L^4(\GC;\R^3)) \to  %L^2(0,\sfT;W) \cap L^\infty(0,\sfT;V) \cap  H^1(0,\sfT;H)
 L^\infty(0,\sfT;W) \cap H^1(0,\sfT;V) \cap  W^{1,\infty}(0,\sfT;H)  \\ \text{the solution operator associated with  \eqref{flow-rule-u-fixed}}.
\end{gathered}
\end{equation}
\begin{proof}
Observe that \eqref{flow-rule-u-fixed} can be recast as the abstract doubly nonlinear equation
\begin{equation}
\label{DNE}
\partial \mathcal{R}_{\mathsf{\chi}}^\eps(\chi_t(t)) + \partial^- \mathcal{E}_2(\overline{\uu}(t),\chi(t)) \ni 0 \qquad \text{in } H \quad \foraa\, t \in (0,\sfT),
\end{equation}
with the dissipation potential $ \mathcal{R}_{\mathsf{\chi}}^\eps(\dot \chi): = \tfrac12 \| \dot \chi\|_H^2 + \int_{\GC} \widehat{\rho}_\eps(\dot \chi) \dd x $
(cf.\ \eqref{underlying-diss}), and the driving energy $\mathcal{E}_2$ from  \eqref{underlying-en}. In fact, here both the convex analysis subdifferential $\partial \mathcal{R}_{\mathsf{\chi}}^\eps$ and the Fr\'echet subdifferential $\partial^- \mathcal{E}_2$ reduce to singletons.
Under the %additional, technical
 condition  that $\overline\uu \in H^1(0,\sfT;L^4(\GC;\R^3))$, we may apply  \cite[Theorem 2.2]{MRS} to conclude the existence of a solution
 $\chi \in L^2(0,\sfT;W) \cap L^\infty(0,\sfT;V) \cap  H^1(0,\sfT;H) $
 for (the Cauchy problem for)  \eqref{DNE},
 via a time discretization procedure. The enhanced regularity $ L^\infty(0,\sfT;W) \cap H^1(0,\sfT;V) \cap  W^{1,\infty}(0,\sfT;H) $ may be inferred by
performing, on the time discrete level,  the \emph{regularity estimate} that consists in testing the flow rule by $\partial_t (-\Delta\chi +\beta_\eps(\chi))$ (cf.\ the proof of
Prop.\ \ref{prop:loc-exis}). 
%The existence statement under the weaker condition $\overline\uu \in L^4(0,\sfT;L^4(\GC;\R^3))$ follows from a standard approximation procedure.
\par
 Estimate \eqref{est-chi-fix-pt}
follows from testing \eqref{flow-rule-u-fixed}
 by $\chi_t$ and integrating in time. Taking into account that
 \[
 \int_0^t \int_{\GC} \rho_\eps(\chi_t) \chi_t \dd x \dd r \geq 0,
 \]
 since $\rho_\eps(0)=0$ and $\rho_\eps$ is increasing,
  and using that
 $\int_0^t \int_{\GC} \beta_\eps(\chi) \chi_t  = \int_{\GC}\widehat{\beta}_\eps(\chi(t)) \dd x - \int_{\GC}\widehat{\beta}_\eps(\chi_0) \dd x $ by the chain rule, we get
 \[
 \int_0^t \|\chi_t\|_{H}^2 \dd r  + \frac12 \| \nabla \chi(t)\|_{H}^2 + \int_{\GC}\widehat{\beta}_\eps(\chi(t)) \dd x
 \leq
 \frac12 \|\nabla \chi_0\|_H^2 +
 \int_{\GC}
  \widehat{\beta}(\chi_0) \dd x +I_1+I_2+I_3+I_4,
 \]
 where we have also used that $   \int_{\GC}
 \widehat{\beta}_\eps(\chi_0) \dd x
 \leq  \int_{\GC}
 \widehat{\beta}(\chi_0) \dd x $.
 With Young's inequality, we estimate
 \[
 \begin{aligned}
 &
 \begin{aligned}
 |I_1|\leq \int_0^t \int_{\GC} |\gamma'(\chi)||\chi_t| \dd x  \dd r &  \stackrel{(1)}{\leq}
  \int_0^t \int_{\GC} (|\gamma'(\chi_0)|+C |\chi-\chi_0|)|\chi_t| \dd x  \dd r
\\ &   \leq C \|\chi_0\|_H^2 +  \frac18   \int_0^t \|\chi_t\|_{H}^2 \dd r + C  \int_0^t \|\chi\|_{H}^2 \dd r  + C,
\end{aligned}
  \\
  &
  |I_2| \leq  \int_0^t \int_{\GC} |\overline\uu|^2|\chi_t| \dd x  \dd r \leq
   \frac18   \int_0^t \|\chi_t\|_{H}^2 \dd r +2 \int_0^t     \| \overline\uu\|_{L^4(\GC)}^4   \dd r,
   \\
   &
   \begin{aligned}
   |I_3| \leq  \int_0^t \int_{\GC} |\overline\uu|^2 | \nlocss{\chi} ||\chi_t| \dd x  \dd r & \leq  \frac18   \int_0^t \|\chi_t\|_{H}^2 \dd r  + C \int_0^t \| \overline\uu\|_{L^4(\GC)}^4 \| \nlocss{\chi}\|_{L^\infty(\GC)}^2 \dd r
   \\ &
   \stackrel{(2)}{\leq}
  \frac18   \int_0^t \|\chi_t\|_{H}^2 \dd r  + C \int_0^t \| \overline\uu\|_{L^4(\GC)}^4 \| \chi\|_{H}^2 \dd r,
  \end{aligned}
  \\
  &
   |I_4| \leq  \int_0^t \int_{\GC}|  \nlocss{\chi|\overline\uu|^2} ||\chi_t| \dd x  \dd r    \stackrel{(3)}{\leq}  \frac18   \int_0^t \|\chi_t\|_{H}^2 \dd r  + C \int_0^t \| \overline\uu\|_{L^4(\GC)}^4 \|\chi\|_{H}^2 \dd r,
 \end{aligned}
 \]
 with (1) due to the Lipschitz continuity of $\gamma'$, (2) to estimate \eqref{nonloc-later},
  and (3) again following from  \eqref{stima-puntuale}, via $\| \nlocss{\chi |\overline\uu|^2}\|_{L^\infty(\GC)} \leq C
 \|\chi |\overline\uu|^2\|_{L^1(\GC)} \leq C \|\overline\uu\|_{L^4(\GC)}^2 \|\chi\|_{H}$.
%, and (2) from
% $  \|(\chi_1)^+ - (\chi_2)^+\|_{L^1(\GC)} \leq
 %\|(\chi_1 - \chi_2)^+\|_{L^1(\GC)}  \leq \|\chi_1 - \chi_2\|_{L^1(\GC)} $.
  Moreover, a comparison in \eqref{flow-rule-u-fixed} also provides a bound for $\|\Delta\chi\|_{L^2(0,\sfT;H)}$, whence the estimate for $\chi$ in $L^2(0,\sfT;W)$ by standard elliptic regularity results, relying on the assumption of $\mathrm{C}^{1,1}$-boundary for $\GC$. 
 \par
 Finally, in order to prove \eqref{est-chi-cont-depen},
 let us  preliminarily introduce,
 for fixed $\overline\uu \in  L^4(0,\sfT;L^4(\GC;\R^3))$,
the \emph{Lipschitz continuous} mapping
\[
\mathcal{F}(\overline\uu; \cdot): H \to H \text{ defined by }
\mathcal{F}(\overline\uu;\chi): =
 - \beta_\eps(\chi) - \gamma'(\chi) -\frac12 |\overline\uu|^2 - \frac12 |\overline\uu|^2 \nlocss{\chi}  -\frac12 \nlocss{\chi |\overline\uu|^2}.
 \]
Then,
  we  subtract  equation \eqref{flow-rule-u-fixed}, written for a given  $\overline{\uu}_2$ in $L^4(0,\sfT; L^4(\GC;\R^3))$,
from \eqref{flow-rule-u-fixed}  for $\overline{\uu}_1$,  test the resulting relation by $\partial_t(\chi_1-\chi_2)$.
We use that
\[
\int_{\GC} \left(\rho_\eps(\partial_t\chi_1){-}\rho_\eps(\partial_t\chi_2) \right) (\partial_t\chi_1{-}\partial_t\chi_2) \dd x  \geq 0
\]
 a.e.\ in $(0,\sfT)$ 
by the monotonicity of $\rho_\eps$,
 and, on the right-hand side, we  estimate
\[
\left| \int_{\GC} \left( \mathcal{F}(\overline{\uu}_1;\chi_1) {-}\mathcal{F}(\overline{\uu}_2;\chi_2)  \right)  \partial_t(\chi_1-\chi_2) \dd x \right| \leq \frac12  \|\partial_t(\chi_1-\chi_2)\|_{H}^2
+ \frac12 \|\mathcal{F}(\overline{\uu}_1;\chi_1) -\mathcal{F}(\overline{\uu}_2;\chi_2) \|_H^2\,.
\] 
All in all, we obtain
\begin{equation}
\label{quoted-later}
\begin{aligned}
 & \frac12\|\partial_t(\chi_1-\chi_2)\|_{H}^2
+\frac12 \frac{\dd}{\dd t} \|\nabla(\chi_1-\chi_2)\|_H^2
\stackrel{(1)}{\leq}
\frac12 \|\mathcal{F}(\overline{\uu}_1;\chi_1) -\mathcal{F}(\overline{\uu}_2;\chi_2) \|_H^2
\\ &
 \leq \left(C+\frac1\eps \right) \|\chi_1-\chi_2\|_H^2 +   C (\|\overline{\uu}_1\|_{L^4(\GC)}^2 + \|\overline{\uu}_2\|_{L^4(\GC)}^2 ) \|\overline{\uu}_1 -\overline{\uu}_2\|_{L^4(\GC)}^2
 \\
 & \qquad + C  \|\chi_1-\chi_2\|_H^2\|\overline{\uu}_2\|_{L^4(\GC)}^4
 +
C\|\chi_1\|_H^2  (\|\overline{\uu}_1\|_{L^4(\GC)}^2 + \|\overline{\uu}_2\|_{L^4(\GC)}^2 )  \|\overline{\uu}_1 -\overline{\uu}_2\|_{L^4(\GC)}^2.
\end{aligned}
 \end{equation}
 a.e.\ in $(0,\sfT)$. 
 For (1), we have used the Lipschitz continuity of $\gamma'$ and of $\beta_\eps$ (with Lipschitz constant $\tfrac1\eps$), and we have for example estimated
 \[
\begin{aligned}
&
\|  |\overline{\uu}_1|^2 \nlocss{\chi_1}   -  |\overline{\uu}_2|^2 \nlocss{\chi_2} \|_{H}^2
\\
&
  \leq
 2\| \overline{\uu}_1+ \overline{\uu}_2\|_{L^4(\GC)}^2
 \| \overline{\uu}_1- \overline{\uu}_2\|_{L^4(\GC)}^2 \|\nlocss{\chi_1}\|_{L^\infty(\GC)}^2 +  2
 \|\overline{\uu}_2\|_{L^4(\GC)}^4  \|\nlocss{\chi_1}- \nlocss{\chi_2}\|_{L^\infty(\GC)}^2
  \\
  &\stackrel{(2)}{\leq}  C
   \| \overline{\uu}_1+ \overline{\uu}_2\|_{L^4(\GC)}^2
 \| \overline{\uu}_1- \overline{\uu}_2\|_{L^4(\GC)}^2 \|\chi_1\|_{H}^2
 +  C \| \overline{\uu}_2\|_{L^4(\GC)}^4  \|\chi_1- \chi_2\|_{H}^2
\end{aligned}
\]
where (2) follows from \eqref{stima-puntuale}.
Therefore, from \eqref{quoted-later}  we deduce, via  the Gronwall Lemma, that
 \[
 \begin{aligned}
\|\partial_t (\chi_1-\chi_2)(t)\|_H^2 \leq C & \mathrm{exp}\left(C\left(1+\frac1\eps\right)t + C \int_0^{t}\|\overline{\uu}_2\|_{L^4(\GC)}^4 \right)
\\
& \times
 (\|\chi_1\|_H^2+1) (\|\overline{\uu}_1\|_{L^4(\GC)}^2 + \|\overline{\uu}_2\|_{L^4(\GC)}^2 ) \|\overline{\uu}_1 -\overline{\uu}_2\|_{L^4(\GC)}^2
 \end{aligned}
 \]
for almost all $t\in (0,\sfT)$. Upon integrating in time we then conclude  estimate \eqref{est-chi-cont-depen} for  $\| \chi_1-\chi_2\|_{H^1(0,\sfT;H)}$.

  We then also recover the bound for  $\| \chi_1-\chi_2\|_{L^\infty(0,\sfT;V)}$.
 \end{proof}
\paragraph{\bf Solution operator for Problem \ref{prob:approx} and application of the Schauder theorem.}
For  fixed $M>0$ %and $p\in (2,4)$,\footnote{\RRR in BBR1 avevamo scelto $L^2(0,T;
%L^p(\GC))$ come spazio per il punto fisso.. ora non vedo piu' l'utilita' di lavorare in $L^p(\GC)$, credo che basterebbe stare in $L^4(0,T;H)$... che dite?? \ELE in effetti non lo uso...pensiamoci ancora \EEE}
we consider the closed ball
\begin{equation}
\label{ball-fixed-point}
\mathcal{S}_{M}(\sfT): = \{ \chi\in  L^4(0,\sfT;H)\, : \| \chi\|_{L^4(0,\sfT; H)}  \leq M \}\,.
\end{equation}
In view of \eqref{sol-op-chi}  and \eqref{sol-op-u}, the operator
\begin{equation}
\label{sol-op-comp}
\mathcal{T}: = \mathcal{T}_2 \circ \mathcal{T}_1:   L^4(0,\sfT;H) \to  L^\infty(0,\sfT;W) \cap H^1(0,\sfT;V) \cap  W^{1,\infty}(0,\sfT;H) \qquad \text{ is well defined}.
\end{equation}
Our next result shows that, at least for small times, the map $\mathcal{T}$  complies with the conditions of the Schauder theorem.
\begin{lemma}
\label{l:schauder}
Assume \eqref{hyp:GC}, \eqref{ass-K}, \eqref{hyp-alpha}--\eqref{hyp-gamma}, and \eqref{hyp:forces}. Let the initial data  $(\uu_0,\chi_0)$  fulfill
\eqref{hyp:initial}.
Then, there exists $\widehat{T}>0$ such that
the operator $\mathcal{T}$
\begin{enumerate}
\item
maps $
 \mathcal{S}_{M}(\widehat{T}) $ into itself;
 \item
 is continuous with respect to the strong topology of  $L^4(0,\widehat{T};H) $; 
 \item maps $\mathcal{S}_{M}(\widehat{T}) $ into a compact subset of $L^4(0,\widehat{T};H) $. 
 \end{enumerate}
   % \mathcal{S}_{M}(\widehat{T}) $
\end{lemma}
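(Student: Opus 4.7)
The plan is to verify the three hypotheses of Schauder's fixed point theorem on the closed convex set $\mathcal S_M(\widehat T)\subset L^4(0,\widehat T;H)$, by combining the a priori estimates \eqref{est-u-fix-pt}, \eqref{est-chi-fix-pt} with the continuous-dependence bounds \eqref{est-u-cont-dep}, \eqref{est-chi-cont-depen} of Lemmas \ref{l:solvability-u}--\ref{l:solvability-chi}. The horizon $\widehat T>0$ will be fixed small at the end, depending on the initial data and on the chosen radius $M$; $M>0$ itself can be prescribed arbitrarily.

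For item (1), given $\overline\chi\in\mathcal S_M(\widehat T)$, I would first apply Lemma \ref{l:solvability-u} to obtain $\uu=\mathcal T_1(\overline\chi)\in H^1(0,\widehat T;\bsV)$ with $\|\uu\|_{H^1(0,\widehat T;\bsV)}\le R_1$, where
\[
R_1:=Q_1\bigl(\|\uu_0\|_{\bsV},\,\widehat{\balpha}(\uu_0),\,\|\mathbf F\|_{L^2(0,\widehat T;\bsV')},\,M\bigr)
\]
stays bounded as $\widehat T\downarrow 0$. The continuous embedding $H^1(0,\widehat T;\bsV)\hookrightarrow C([0,\widehat T];\bsV)$ together with the trace embedding \eqref{embedding} then gives $\|\uu\|_{L^\infty(0,\widehat T;L^4(\GC))}\le CR_1$, whence $\uu\in H^1(0,\widehat T;L^4(\GC;\R^3))$ and $\|\uu\|_{L^4(0,\widehat T;L^4(\GC))}\le C\widehat T^{1/4}R_1$. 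Plugging this into Lemma \ref{l:solvability-chi} yields $\chi=\mathcal T_2(\uu)=\mathcal T(\overline\chi)$ with $\|\chi\|_{L^\infty(0,\widehat T;V)}\le R_2$, for an analogous bound $R_2$ that again remains bounded as $\widehat T\downarrow 0$. Since $V\hookrightarrow H$,
\[
\|\mathcal T(\overline\chi)\|_{L^4(0,\widehat T;H)}\le C\widehat T^{1/4}\|\chi\|_{L^\infty(0,\widehat T;H)}\le C'\widehat T^{1/4}R_2,
\]
and the right-hand side can be made $\le M$ by choosing $\widehat T$ small, uniformly in $\overline\chi\in\mathcal S_M(\widehat T)$.

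For (2), let $\overline\chi_n\to\overline\chi$ strongly in $L^4(0,\widehat T;H)$. Estimate \eqref{est-u-cont-dep} gives $\mathcal T_1(\overline\chi_n)\to \mathcal T_1(\overline\chi)$ in $L^\infty(0,\widehat T;\bsV)$, and hence in $L^2(0,\widehat T;L^4(\GC;\R^3))$ via \eqref{embedding}. In \eqref{est-chi-cont-depen} the factor $\widetilde Q_2(\eps^{-1},\dots)$ is controlled uniformly in $n$ (thanks to the uniform bounds \eqref{est-u-fix-pt}--\eqref{est-chi-fix-pt}), so $\mathcal T(\overline\chi_n)\to\mathcal T(\overline\chi)$ in $L^\infty(0,\widehat T;V)\cap H^1(0,\widehat T;H)$, a fortiori in $L^4(0,\widehat T;H)$. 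For (3), the images $\mathcal T(\overline\chi)$, $\overline\chi\in \mathcal S_M(\widehat T)$, form a bounded set in $L^\infty(0,\widehat T;V)\cap H^1(0,\widehat T;H)$; since $V\Subset H$, the Aubin--Lions lemma gives compact embedding into $C([0,\widehat T];H)\hookrightarrow L^4(0,\widehat T;H)$, so $\mathcal T(\mathcal S_M(\widehat T))$ is relatively compact there.

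The main technical point is to verify that $R_1$ and $R_2$ do not blow up as $\widehat T\downarrow 0$: this amounts to tracing through the Gronwall arguments behind Lemmas \ref{l:solvability-u}--\ref{l:solvability-chi} and observing that $Q_1$ and $Q_2$ are continuous and monotone in each argument, while the quantities $\|\mathbf F\|_{L^2(0,\widehat T;\bsV')}$ and $\|\overline\chi\|_{L^4(0,\widehat T;H)}\le M$ appearing in them remain bounded as $\widehat T$ decreases. Once this is secured, the factor $\widehat T^{1/4}$ produced by H\"older's inequality closes the invariance step, and Schauder's theorem delivers the fixed point corresponding to a local-in-time solution of Problem \ref{prob:approx}.
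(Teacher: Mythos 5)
Your proposal is correct and follows essentially the same approach as the paper: combine the a priori estimates \eqref{est-u-fix-pt}, \eqref{est-chi-fix-pt} with the continuous-dependence bounds \eqref{est-u-cont-dep}, \eqref{est-chi-cont-depen}, extract the $\widehat T^{1/4}$ factor via H\"older to secure invariance for small $\widehat T$, and use Aubin--Lions-type compactness for (3). You flesh out some steps the paper leaves terse (e.g.\ why the constants remain bounded as $\widehat T\downarrow 0$, and the explicit Aubin--Lions argument), but the structure and key estimates are the same.
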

\begin{proof}
\textbf{Ad (1):} Combining estimates \eqref{est-u-fix-pt} and \eqref{est-chi-fix-pt} we find for every $\sfT\in (0,T]$
\begin{equation}
\label{est-Tchi}
\| \mathcal{T}(\chi)\|_{H^1(0,\sfT;H) \cap L^\infty(0,\sfT;V) } \leq Q_3 (\|\uu_0\|_{\bsV},
 \widehat{\balpha}(\uu_0), 
 \|\chi_0\|_V, \|\widehat{\beta}(\chi_0)\|_{L^1(\GC)}, \|\mathbf{F}\|_{L^2(0,T;\bsV')}, \|\chi\|_{L^4(0,\sfT;H)})\,.
\end{equation}
Therefore,
\[
 \| \mathcal{T}(\chi)\|_{L^4(0,\sfT;H)}  \leq C \sfT^{1/4}  \| \mathcal{T}(\chi)\|_{L^\infty(0,\sfT;V)} \leq  \overline{C}   \sfT^{1/4}  Q_3 (\|\uu_0\|_{\bsV}, \widehat{\balpha}(\uu_0),  \|\chi_0\|_V, \|\widehat{\beta}(\chi_0)\|_{L^1(\GC)}, \|\mathbf{F}\|_{L^2(0,T;\bsV')}, M)\,,
\]
so that property (1) follows by  choosing
\begin{equation}
\label{est-ball-schauder}
0<\widehat{T} \leq  \left( \frac{ M}{ \overline{C} Q_3 (\|\uu_0\|_{\bsV},
 \widehat{\balpha}(\uu_0), 
 \|\chi_0\|_V, \|\widehat{\beta}(\chi_0)\|_{L^1(\GC)}, \|\mathbf{F}\|_{L^2(0,T;\bsV')}, M)} \right)^4 \,. 
\end{equation}
%%%
\textbf{Ad (2):} Let $(\chi_n)_n,\, \chi \subset  \mathcal{S}_{M}(\widehat{T})  $ fulfill $\chi_n\to \chi$ in   $L^4(0,\widehat{T};H)$.  Combining \eqref{est-u-fix-pt} with  \eqref{est-u-cont-dep} we find that $\mathcal{T}_1(\chi_n) \to \mathcal{T}_1(\chi)$ in $L^\infty(0,\widehat{T};\bsV)$.
We then use \eqref{est-chi-fix-pt} and \eqref{est-chi-cont-depen} to conclude that
$\mathcal{T}(\chi_n) = \mathcal{T}_2(\mathcal{T}_1(\chi_n)) \to \mathcal{T}(\chi) = \mathcal{T}_2(\mathcal{T}_1(\chi)) $ in $L^\infty(0,\widehat{T};V) \cap H^1(0,\widehat{T};H)$, and thus in
 $L^4(0,\widehat{T};H)$. 
\par
\noindent
\textbf{Ad (3):}  The compactness property easily follows from combining estimates  \eqref{est-u-fix-pt} and  \eqref{est-chi-fix-pt}.
\end{proof}
\par
Eventually,
 we are in a position to conclude
 the \underline{proof of Proposition \ref{prop:local}}. The existence statement  follows from Lemma \ref{l:schauder} via the Schauder theorem. Uniqueness of solutions to Problem
 \ref{prob:approx} (the same argument
 would also yield continuous dependence on the initial and problem data) ensues by adding up estimates \eqref{quoted4cont-dep} and \eqref{quoted-later} and applying the Gronwall Lemma.  \QED
 %%%%%%%
\begin{remark}
\label{rmk:indep-eps}
\upshape
Estimate \eqref{est-ball-schauder}  shows that
the local-existence time $\widehat{T}$ does not depend on
$\eps$.
\par
A closer examination of the proof of
Lemma \ref{l:schauder} (based on Lemmas \ref{l:solvability-u} and \ref{l:solvability-chi})
 in fact reveals  that $\widehat{T}$ does not depend on
the specific initial conditions $\uu_0$, $\chi_0$ either,
but only on quantities related to them. In other words,
for any ``ball"
of initial data
$$
\mathcal{I}(r):= \left\{ ({\uu}_0, {\chi_0}) \in \bsV \times  W  \, :
\ \| {\uu}_0 \|_{\bsV} +  \widehat{\balpha}(\uu_0) +
  \|{\chi}_0\|_{V} +  
 \|\widehat{\beta}(\chi_0)\|_{L^1(\GC)}  \leq r
\right
\},
$$
there exists a final time $\widehat{T}_r >0$, only depending on $r$
and on known constants, such that, for any  $({\uu}_0,
{\chi_0}) \in \mathcal{I}(r)$ and for any $\eps >0$, the approximate
Problem \ref{prob:approx},
 supplemented with the initial data
$({\uu}_0, {\chi_0})$
  admits a solution
on the interval $(0, \widehat{T}_r )$.
%\end{description}
\end{remark}

\section{Local  existence for Problem \ref{prob:PDE}}
\label{ss:3.3}
%The following result collects all of our a priori estimates on
Throughout this section we will highlight the dependence of the local solution
for Problem  \ref{prob:approx} (found in  Proposition  \ref{prop:local})
on the approximation parameter $\eps$, by denoting it with $(\uu_{\epsi},\zzeta_{\epsi}, \chi_{\epsi})$. In this section, we  perform an asymptotic analysis as
$\epsi\downarrow 0$
 of the sequence
$(\uu_{\epsi},\zzeta_{\epsi}, \chi_{\epsi})_\eps$  on the existence interval
$(0,\widehat{T})$, which \emph{does not} depend on $\eps$, cf.\ Remark \ref{rmk:indep-eps}.
\par
 We thus obtain the following existence result of \emph{local-in-time} solutions for
 Problem \ref{prob:PDE}.
\begin{proposition}
\label{prop:loc-exis}
Assume \eqref{hyp:GC}, \eqref{ass-K},  \eqref{hyp-alpha}--\eqref{hyp-gamma}, 
and \eqref{hyp:forces}. Let $(\uu_0,\chi_0)$   fulfill
\eqref{hyp:initial}.
Then, for every  vanishing   sequence $\eps_k\downarrow 0   $ as $k\to\infty$
 there exists
a %vanishing sequence $\{\epsi_k\}_k$ and a
quintuple $(\uuh,\zetah,\chih,\omegah,\xih)$,
with
$$
\begin{gathered}
\uuh \in H^1(0,\widehat{T};\bsV),\ \ \chih\in
L^\infty(0,\widehat{T};W)\cap
H^1(0,\widehat{T};V)\cap W^{1,\infty}(0,\widehat{T};H),
\\
  \zetah\in L^2(0,\widehat{T}; \bsY')\,,
  \ \
 \omegah\in L^\infty(0,\widehat{T}; H)\,,
\ \
\xih\in L^\infty (0,\widehat{T}; H) \,,
\end{gathered}
$$
such that the following convergences hold as $k\to\infty$
\begin{subequations}
\label{convs-eps}
\begin{align}
& \uu_{\eps_k}  \debole \uuh  &&  \text{ in }  H^1(0,\widehat{T}; \bsV), \label{conv1}\\
& \uu_{\eps_k} \to \uuh && \text{ in }  \mathrm{C}^0([0,\widehat{T}];
H^{1-s}(\Omega;\R^3)) \quad \text{for all } s\in (0,1),
\label{conv1bis}\\
 &\uu_{\eps_k}  \to \uuh  &&  \text{ in }  \mathrm{C}^0([0,\widehat{T}]; L^p(\GC;\R^3)) \quad \text{for all $1\leq p<4$},
\label{convf1}\\
&\chi_{\eps_k}  \debolestar \chih  &&  \text{ in }
L^{\infty}(0,\widehat{T};W) \cap
H^1(0,\widehat{T};V)  \cap W^{1,\infty}(0,\widehat{T};H),  \label{conv3}\\
&\chi_{\eps_k}  \to \chih && \text{ in }
\mathrm{C}^0([0,\widehat{T}];H^{2-s}(\GC))  \quad \text{for all } s\in (0,2),  \label{convf2}\\
&\zzeta_{\eps_k}\debole \ \zetah &&
\text{ in } L^2(0,\widehat{T};\bsY'), \label{conv2}\\
&\beta_{\eps_k}(\chi_{\eps_k})  \debolestar \xih  &&  \text{ in }
L^{\infty}(0,\widehat{T};H), \label{conv4}\\
&\rho_{\eps_k}(\dt\chi_{{\eps_k}})  \debolestar \omegah && \text{ in } \
L^{\infty}(0,\widehat{T};H)\,. \label{conv5}
\end{align}
\end{subequations}
Besides, $(\uuh,\zetah,\chih,\omegah,\xih)$ is a solution of
Problem \ref{prob:PDE},  fulfilling the energy-dissipation inequality \eqref{enid-balance}
(holding as a \emph{balance} if $\widehat{\rho}$ is $1$-positively homogeneous),  on $(0,\widehat{T})$.
\end{proposition}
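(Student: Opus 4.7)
The plan is to fix a vanishing sequence $\eps_k \downarrow 0$, derive $\eps$-uniform a priori estimates on the approximate solutions $(\uu_{\eps_k}, \zzeta_{\eps_k}, \chi_{\eps_k})$ on the common interval $(0,\widehat{T})$ (cf.\ Remark~\ref{rmk:indep-eps}), extract converging subsequences by compactness, and identify the limits of the regularized monotone operators. Two estimates are required. The first is the basic energy identity: test \eqref{eqU-approx} by $\uu_{\eps,t}$ and \eqref{eqCHI-eta} by $\chi_{\eps,t}$, sum and integrate in time. Using the chain rule \eqref{ch-rule-alpha}, the monotonicity of $\rho_\eps$ (so that $\int_{\GC} \rho_\eps(\chi_{\eps,t})\chi_{\eps,t}\dd x \geq 0$), the convexity of $\widehat\beta_\eps$, and handling the nonlocal/coupling terms on the right-hand side via \eqref{stima-puntuale}, \eqref{nonloc-later}, \eqref{embedding} and Young's inequality, Gronwall yields uniform bounds for $\uu_\eps$ in $H^1(0,\widehat{T};\bsV)$ and for $\chi_\eps$ in $L^\infty(0,\widehat{T};V) \cap H^1(0,\widehat{T};H)$. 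The second is the \emph{regularity estimate} from \cite{bfl}: test \eqref{eqCHI-eta} by $\partial_t(-\Delta\chi_\eps + \beta_\eps(\chi_\eps))$. Exploiting the favourable signs provided by the smoothness and monotonicity of $\beta_\eps$ and $\rho_\eps$, this produces uniform bounds for $\chi_\eps$ in $L^\infty(0,\widehat{T};W)\cap H^1(0,\widehat{T};V)\cap W^{1,\infty}(0,\widehat{T};H)$ and for $\beta_\eps(\chi_\eps)$ in $L^\infty(0,\widehat{T};H)$. A comparison in \eqref{eqCHI-eta} then yields a uniform $L^\infty(0,\widehat{T};H)$-bound on $\rho_\eps(\chi_{\eps,t})$, and a comparison in \eqref{eqU-approx} yields a uniform $L^2(0,\widehat{T};\bsY')$-bound on $\zzeta_\eps$.

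With these estimates in hand, Banach--Alaoglu delivers the weak and weak-$*$ convergences \eqref{conv1}, \eqref{conv3}, \eqref{conv2}, \eqref{conv4}, \eqref{conv5} along a (not relabelled) subsequence; the strong convergences \eqref{conv1bis}, \eqref{convf1}, \eqref{convf2} follow from the Aubin--Lions--Simon lemma combined with the compact trace embedding \eqref{embedding}. The passage to the limit in the nonlocal products appearing on the left-hand side of \eqref{eqU-approx} and on the right-hand side of \eqref{eqCHI-eta} is then routine, thanks to these strong convergences together with the weak-to-strong continuity of $\nlname$ stated in Lemma~\ref{lemmaK}.

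The delicate point is the identification of the monotone selections. For $\xih \in \beta(\chih)$, the strong convergence $\chi_\eps \to \chih$ in $L^2(0,\widehat{T};H)$ together with $\beta_\eps(\chi_\eps) \weak \xih$ and the standard property of Yosida approximations (Minty's trick) suffices. For $\zetah \in \balpha(\uuh)$ the same argument applies, using \eqref{convf1} and the chain rule \eqref{ch-rule-alpha} on $\widehat\balpha$. The \emph{main obstacle} is the identification $\omegah \in \rho(\chih_t)$: since $\chi_{\eps,t}$ only converges weakly, a direct Minty argument fails. The remedy is to pass to the $\liminf$ in the energy identity from the first a priori estimate --- so obtaining an energy inequality for the limit quintuple --- and then combine it with $\rho_\eps(\chi_{\eps,t})\chi_{\eps,t} \geq \widehat\rho_\eps(\chi_{\eps,t})$ and the lower semicontinuity of $\int\!\!\int \widehat\rho_\eps(\chi_{\eps,t})$ to produce the key $\limsup$ inequality
\[
\limsup_{k\to\infty} \int_0^{\widehat{T}}\!\!\int_{\GC}\rho_{\eps_k}(\chi_{\eps_k,t})\,\chi_{\eps_k,t}\,\dd x\dd r \,\leq\, \int_0^{\widehat{T}}\!\!\int_{\GC}\omegah \,\chih_t \,\dd x \dd r,
\]
whence $\omegah \in \rho(\chih_t)$ a.e.\ on $\GC\times(0,\widehat{T})$ by maximal monotonicity.

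The energy-dissipation inequality \eqref{enid-balance} then arises as a by-product of the same $\liminf$ procedure, invoking the lower semicontinuity of $\calR$ and $\calE$ together with the Mosco-type convergence of $\widehat\rho_\eps$ to $\widehat\rho$. When $\widehat\rho$ is positively $1$-homogeneous, the inequality upgrades to an equality via the chain rule available in the gradient-system structure \eqref{gen-gra-sys}, along the lines of \cite{MRS}. The initial conditions are preserved in the limit through the uniform-in-time strong convergences \eqref{conv1bis} and \eqref{convf2}.
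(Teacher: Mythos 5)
Your proposal is structurally close to the paper's: $\eps$-uniform a priori estimates (the basic energy estimate plus the regularity estimate obtained from testing \eqref{eqCHI-eta} by $\partial_t(-\Delta\chi_\eps+\beta_\eps(\chi_\eps))$), Aubin--Lions/Simon compactness with strong trace convergences, passage to the limit in the nonlocal terms via Lemma~\ref{lemmaK}, and identification of the monotone selections via $\limsup$ inequalities and maximal monotonicity. Your $\liminf$/Mosco route to the energy-dissipation inequality is a valid alternative to the paper's, which instead derives \eqref{enid-balance} by testing the \emph{limit} system directly by $\partial_t\uuh$, $\partial_t\chih$ after the identifications are complete and using the chain rules and $\int\!\!\int\omegah\,\partial_t\chih\geq\int\!\!\int\widehat\rho(\partial_t\chih)$.

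The genuine gap is in the identification $\omegah\in\rho(\partial_t\chih)$. The mechanism you describe does not produce the $\limsup$ inequality you correctly state as the target. Passing to the $\liminf$ in the approximate energy identity, using $\rho_\eps(\partial_t\chi_\eps)\,\partial_t\chi_\eps \geq \widehat\rho_\eps(\partial_t\chi_\eps)$, and invoking lower semicontinuity of $\int\!\!\int\widehat\rho_\eps(\partial_t\chi_\eps)$ yields the energy-dissipation \emph{inequality} for the limit, featuring $\int\!\!\int\widehat\rho(\partial_t\chih)$; the quantity $\int\!\!\int\omegah\,\partial_t\chih$ never enters this chain and cannot be recovered from it, since before $\omegah\in\rho(\partial_t\chih)$ is known there is no relation between $\int\!\!\int\omegah\,\partial_t\chih$ and $\int\!\!\int\widehat\rho(\partial_t\chih)$. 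What is actually needed (cf.\ \eqref{needed-later} in the paper) is to test \eqref{eqCHI-eta} by $\partial_t\chi_\eps$, isolate $\int_0^t\!\!\int_{\GC}\rho_\eps(\partial_t\chi_\eps)\,\partial_t\chi_\eps$ on one side, take the $\limsup$ of the other side using lower semicontinuity of $\|\partial_t\chi_\eps\|^2_{L^2(0,t;H)}$ and $\|\nabla\chi_\eps(t)\|^2_H$, Mosco convergence of $\widehat\beta_\eps$, and the strong convergences of the remaining coupling and nonlocal terms, and then recognize the resulting upper bound as exactly $\int_0^t\!\!\int_{\GC}\omegah\,\partial_t\chih$ by testing the already-established limit flow rule \eqref{eqCHI} by $\partial_t\chih$. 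A milder but analogous imprecision concerns $\zetah$: since $\uu_\eps\to\uuh$ strongly only in $\mathrm{C}^0([0,\widehat{T}];L^p(\GC;\R^3))$ for $p<4$ and not in $\bsY$, a direct strong--weak Minty pairing is not available for $\balpha$ either; the paper tests \eqref{eqU-approx} by $\uu_\eps$ and exploits the lower semicontinuity of $a$ and $b$ to reach the corresponding $\limsup$ inequality.
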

% In the following calculations, we let $\epsi$ vary, say, in $(0,1)$ and, to simplify notation,
%we will directly work on the interval $(0,T)$ instead of $(0,\widehat{T})$.
 \begin{proof}
\textbf{Step $1$: a priori estimates.} We derive
  some {\it a priori}
estimates for (suitable norms of)  the family $(\uu_{\epsi},\zzeta_{\epsi}, \chi_{\epsi})_\eps$
on the interval $(0,\widehat{T})$. First of all, recall that the solution components $\chi_\eps$ are  found as fixed points of the Schauder operator $\mathcal{T}$ from \eqref{sol-op-comp} in the ball $\mathcal{S}_M(\widehat{T})$. Therefore,
in view of
 estimate \eqref{est-u-fix-pt}   there exists a
positive constant~$C$, independent of $\epsi$,  such that
\begin{equation}
\|\uu_\epsi\|_{H^1(0,\widehat{T};\bsV)}\leq C\,.
\label{st1}
\end{equation}
Then, \eqref{est-chi-fix-pt} (cf.\ also \eqref{est-Tchi}) yields
\begin{equation}
 \|\chi_\epsi\|_{
L^2 (0,\widehat{T}; W) \cap L^{\infty}(0,\widehat{T};V) \cap H^1(0,\widehat{T};H)}\leq C\,.  \label{st2}
\end{equation}
\par
Secondly, we perform a  comparison argument in  \eqref{eqU-approx}.
Observe that, by the second of \eqref{stima-puntuale} and \eqref{st2}, we have
\begin{equation}\label{bound1}
\| \nlocss{\chi_\eps}\|_{L^\infty ((0,\widehat{T})\times \GC)}
\leq  C \|\chi_\eps\|_{L^\infty(0, \widehat{T};H)}\leq C'\,,
\end{equation}
Therefore,
 on account of \eqref{st1}--\eqref{bound1} % and \eqref{bound2},
 we obtain
\begin{equation}\label{st4}
\|\zzeta_\eps\|_{L^2(0,\widehat{T};\bsY')}\leq C\,.
\end{equation}
\par
Next, we establish a further {\it regularity} estimate on $\chi_\eps$.
 We multiply (\ref{eqCHI-eta}) by $\dt(-\Delta \chi_\eps+\beta_\eps(\chi_\eps))$
  and we integrate over
$\GC\times (0,t)$.
Observe that all the calculations below can be made rigorous, for the very same system
\eqref{PDE-approx}, by arguing with  difference quotients. 
We have
\begin{align}
&
\frac{1}{2}\|-\Delta \chi_\eps(t)+\beta_\eps(\chi_\eps(t))\|^2_{H}+ \|\nabla\dt\chi_\epsi\|^2_{L^2(0,t;H)}+
\int_0^t\!\!\int_{\GC}
\beta'_\eps(\chi_{\eps }) (\dt\chi_{\eps})^2 \dd x \dd r
\nonumber\\
&+ \int_0^t\!\!\int_{\GC} \rho'_\epsi(\partial_t \chi_{\epsi})
|\nabla(\partial_t \chi_{\epsi})|^2 \dd x \dd r
+ \int_0^t\!\!\int_{\GC}
\beta'_\eps(\chi_{\eps }) \rho_\eps(\dt\chi_{\eps})
\dt\chi_{\eps} \dd x \dd r\nonumber\\
 &
\leq
\frac12\|-\Delta \chi_0+\beta_\epsi(\chi_0)\|^2_{H}+ I_1 +I_2+I_3
\,,\label{stimareg}
\end{align}
where $I_i, i=1,2,3$,  will be defined and estimated in what follows.
We note that the
integral terms
  on the left-hand side of
(\ref{stimareg}) are non-negative, thanks to the monotonicity of
$\rho_\epsi$ and of $\beta_\epsi$  and to the fact that $\rho_\eps(0)=0$. Moreover,
integrating by parts and taking into account \eqref{hyp-gamma}, \eqref{ini-chi}, \eqref{st1}, and \eqref{st2} we have
\begin{align}
\label{int1}
I_{1} &=-\int_0^t\int_{\GC}  \gamma'(\chi_\eps)\, \dt(-\Delta \chi_\eps+\beta_\eps(\chi_\eps))\dd x \dd r
\nonumber\\&
= \int_0^t\int_{\GC}  \gamma{''}(\chi_\eps)\dt\chi_\eps\, (-\Delta \chi_\eps+
  \beta_\eps(\chi_\eps)) \dd x \dd r
   - \int_{\GC} \gamma'(\chi_\eps(t)) (-\Delta \chi_\eps (t)+\beta_\eps(\chi_\eps(t)) \dd x   \nonumber\\&
  +  \int_{\GC} \gamma'(\chi_0) (-\Delta \chi_0+\beta_\eps(\chi_0))\dd x
\leq c \int_0^t \| \dt \chi_\eps \|_{H}   \|-\Delta \chi_\eps+
\beta_\eps(\chi_\eps)\|_{H} \dd r + c \|\chi_\eps(t)\|^2_{H}
\nonumber\\&+
\frac18 \| -\Delta \chi_\eps(t)+
\beta_\eps (\chi_\eps(t))\|^2_{H} +
c \|\chi_0 \|_{H} \, \|-\Delta \chi_0+\beta_\eps(\chi_0)\|_{H} +C
\nonumber\\
&  \leq c \int_0^t \| \dt \chi_\eps \|_{H} \| -\Delta\chi_\eps + \beta_\eps(\chi_\eps)\|_{H} \dd r +
\frac18 \| -\Delta \chi_\eps(t)+\beta_\eps (\chi_\eps(t))\|^2_{H} + C\,.
\end{align}
Arguing in a similar way and using %\eqref{hyp-k},
 \eqref{ini-u}, \eqref{ini-chi},
\eqref{stima-puntuale},
 \eqref{st1},  and \eqref{bound1},  we infer
\begin{equation}
\label{int2}
\begin{aligned}
I_{2} &=-\frac12\int_0^t\int_{\GC}  |\uu_\eps|^2 (1+\nlocss{\chi_\eps})\, \dt(-\Delta \chi_\eps+\beta_\eps(\chi_\eps))\dd x \dd r
\nonumber\\&
=\int_0^t\int_{\GC}  \uu_\eps\cdot\dt\uu_\eps (1+\nlocss{\chi_\eps})\,
(-\Delta \chi_\eps+\beta_\eps(\chi_\eps))\dd x \dd r
+\frac12\int_0^t\int_{\GC}  |\uu_\eps|^2 \nlocss{\dt\chi_\eps}\, (-\Delta \chi_\eps+\beta_\eps(\chi_\eps))\dd x \dd r
\nonumber\\&
\quad
-\frac12 \int_{\GC}  |\uu_\eps(t)|^2 (1+\nlocss{\chi_\eps}(t))\, (-\Delta \chi_\eps(t)+\beta_\eps(\chi_\eps(t)))\dd x
+\frac12 \int_{\GC}  |\uu_0|^2 (1+\nlocss{\chi_0})\, (-\Delta \chi_0+\beta_\eps(\chi_0))\dd x
\nonumber\\&
\leq c \int_0^t \|\uu_\eps \|_{L^4(\GC)} \, \| \dt\uu_\eps \|_{L^4(\GC)}
\|-\Delta\chi_\eps+\beta_\eps(\chi_\eps)\|_{H} \dd r
+c \int_0^t \|\dt\chi_\eps \|_{L^1(\GC)} \int_{\GC} |\uu_\eps|^2
\, |-\Delta \chi_\eps+\beta_\eps(\chi_\eps)| \dd x \dd r
\nonumber\\&
\quad +\frac18 \| -\Delta \chi_\eps(t)+\beta_\eps (\chi_\eps(t))\|^2_{H}+ c \| \uu_\eps(t)\|^4_{L^4(\GC)}+
c \|\uu_0 \|^2_{L^4(\GC)} \, \|-\Delta \chi_0+\beta_\eps(\chi_0)\|_{H}
\nonumber\\&
\leq c \int_0^t \|\uu_\eps \|_{L^4(\GC)} \, \| \dt\uu_\eps \|_{L^4(\GC)}
\|-\Delta \chi_\eps+\beta_\eps(\chi_\eps)\|_{H} \dd r
+c \int_0^t \|\dt\chi_\eps \|_{L^1(\GC)} \|\uu_\eps\|^2_{L^4(\GC)}
\,\| -\Delta \chi_\eps+\beta_\eps(\chi_\eps)\|_H\dd r
\nonumber\\&
\quad + \frac18 \|-\Delta \chi_\eps(t)+ \beta_\eps (\chi_\eps(t))\|^2_{H}+ c
\\&
\leq c \,\|\uu_\eps\|_{L^\infty(0,T;\bsW)} \int_0^t  \| \dt\uu_\eps \|_{L^4(\GC)}
\|-\Delta \chi_\eps+\beta_\eps(\chi_\eps)\|_{H} \dd r
\nonumber\\&
\quad +c\, \|\uu_\eps\|^2_{L^\infty(0,T;\bsW)} \int_0^t \|\dt\chi_\eps \|_{L^1(\GC)}
\,\| -\Delta \chi_\eps+\beta_\eps(\chi_\eps)\|_H\dd r
+ \frac18 \|-\Delta \chi_\eps(t)+ \beta_\eps (\chi_\eps(t))\|^2_{H}+ c\,.
\end{aligned}
\end{equation}
Prior to estimating $I_3$, we observe that, again by the second of \eqref{stima-puntuale}
combined with \eqref{st1}--\eqref{st2}, we have
\begin{align}\label{bound2}
\| \nlocss{\chi_\eps |\uu_\eps|^2}\|_{L^\infty((0,\widehat{T})\times \GC)}
\leq
 c \|\chi_\eps\|_{L^\infty(0,T;H)}\,\|\uu_\eps\|^2_{L^\infty(0,T;\bsW)} \leq C\,.
\end{align}
Then, exploiting %\eqref{hyp-k},
\eqref{ini-u}, \eqref{ini-chi},
\eqref{stima-puntuale},
\eqref{st1}--\eqref{st2}, and \eqref{bound2}, we get
\begin{align}
\label{int3}
I_{3} &=-\frac12\int_0^t\int_{\GC} \nlocss{\chi_\eps \,|\uu_\eps|^2}\,
\dt(-\Delta \chi_\eps+\beta_\eps(\chi_\eps))\dd x \dd r
\nonumber\\&
=\int_0^t\int_{\GC}   \nlocss{\chi_\eps \,\uu_\eps\cdot\dt\uu_\eps})\, (-\Delta \chi_\eps+
\beta_\eps(\chi_\eps))\dd x \dd r
+\frac12\int_0^t\int_{\GC} \nlocss{\dt\chi_\eps \, |\uu_\eps|^2 }\, (-\Delta \chi_\eps+ \beta_\eps(\chi_\eps))\dd x \dd r
\nonumber\\&
-\frac12 \int_{\GC}  \nlocss{\chi_\eps\, |\uu_\eps|^2 }(t)\, (-\Delta \chi_\eps(t)+\beta_\eps(\chi_\eps(t)))\dd x
+\frac12 \int_{\GC}   \nlocss{\chi_0\, |\uu_0|^2}\, (-\Delta \chi_0+\beta_\eps(\chi_0))\dd x
\nonumber\\&
\leq c \int_0^t \|\chi_\eps\, \|_{H}\|\uu_\eps \|_{L^4(\GC)} \, \| \dt\uu_\eps \|_{L^4(\GC)} \int_{\GC}  |-\Delta \chi_\eps+\beta_\eps(\chi_\eps)| \dd x \dd r
\nonumber\\&
+c \int_0^t \|\dt\chi_\eps \|_{H} \|\uu_\eps \|^2_{L^4(\GC)}
\int_{\GC}
|-\Delta \chi_\eps+\beta_\eps(\chi_\eps)|\dd x \dd r
\nonumber\\&
+\frac18 \|-\Delta \chi_\eps+ \beta_\eps (\chi_\eps(t))\|^2_{H}
%+
 %c\|\chi_\eps\|^2_{L^\infty(0,T;H)}\, \|\uu_\eps\|^4_{L^\infty(0,T;\bsW)}
 + \|\chi_0\|_H\, \|\uu_0\|^2_{\bsW} \,\|-\Delta \chi_0+\beta_\eps(\chi_0)\|_{L^1(\GC)}  +C 
\nonumber\\&
\leq c \|\chi_\eps\|_{L^\infty(0,T;H)}\, \|\uu_\eps\|_{L^\infty(0,T;\bsW)}
\int_0^t
 \| \dt\uu_\eps \|_{L^4(\GC)} \| -\Delta \chi_\eps+\beta_\eps (\chi_\eps)\|_{H} \dd r
\nonumber\\&
+c  \|\uu_\eps\|^2_{L^\infty(0,T;\bsW)}
\int_0^t
 \| \dt\chi_\eps \|_{H} \|-\Delta \chi_\eps+ \beta_\eps (\chi_\eps)\|_{H} \dd r
+\frac18 \| -\Delta \chi_\eps(t)+\beta_\eps (\chi_\eps(t))\|^2_{H}+C\,.
\end{align}
We plug the above estimates for $I_{i}, i=1,2,3$  into
\eqref{stimareg}, take into account  the previously obtained \eqref{st1}, \eqref{st2}, and apply
the Gronwall Lemma. In this way, we conclude that
\begin{equation}
\label{st3}\|-\Delta\chi_\epsi+\beta_\epsi(\chi_\epsi)\|_{L^\infty(0,\widehat{T};H)} \leq C\,.
\end{equation}
On  the other hand, by the monotonicity of $\beta_\epsi$, we have
for a.e. $t \in (0,\widehat{T})$
$$
\| (-\Delta\chi_\epsi+\beta_\epsi(\chi_\epsi))(t)
\|_{H}^2 \geq \|-\Delta\chi_\epsi(t)\|_{H}^2
+ \|\beta_\epsi(\chi_\epsi)(t)\|_{H}^2.
$$
Hence, by \eqref{st2} and
 the aforementioned
elliptic regularity results valid on $\GC$, 
%relying on the assumption that $\GC$ has $\mathrm{C}^{1,1}$-boundary
%\footnote{\RRR basta questa, giusto? Non serve il bordo $\mathrm{C}^2$, giusto? Ricordiamoci di ricontrollare..}
 we deduce that
\begin{equation}
\|\chi_\epsi\|_{H^1(0,\widehat{T};V)\cap
L^{\infty}(0,\widehat{T};W)}\leq C
\label{st4-new}
\end{equation}
and, besides,
\begin{equation}
\|\beta_\epsi(\chi_\epsi)\|_{L^{\infty}(0,\widehat{T};H)}\leq
C\,,\label{st5}
\end{equation}
for some positive constant $c$ independent of $\epsi$. Finally, a
comparison in (\ref{eqCHI-eta}), on account of (\ref{st1}),
(\ref{st4-new}), and  (\ref{st5}),  entails
\medskip\noindent
\begin{equation}
\|\rho_\epsi(\dt \chi_\epsi)\|_{L^{\infty}(0,\widehat{T};H)} \leq C\,.\label{st6}
\end{equation}
\paragraph{\bf Step $2$: compactness argument.}
Thanks to the above estimates, by well-known weak and weak*
compactness results, we deduce that, for every  vanishing  sequence $(\eps_k)_k$  there exists a (not relabeled) subsequence of
$\{(\uu_{\eps_k}, \zzeta_{\eps_k}, \chi_{\eps_k})\}_k$ such that  convergences
\eqref{conv1}, \eqref{conv3}, and \eqref{conv2}--\eqref{conv5} hold
as $\epsi_k\downarrow 0$. Moreover, using
 Aubin-Lions
compactness arguments  
and the generalized Ascoli theorem
(cf., e.g.,
 \cite{Simon87}), we also obtain \eqref{conv1bis},
 \eqref{convf1} and  \eqref{convf2},
entailing that the pair $(\uuh, \chih)$ (see Prop. \ref{prop:loc-exis})  fulfills the initial
conditions $\uu(0)=\uu_0$ and $\chi(0)=\chi_0$.
\paragraph{\bf Step $3$: limit passage in the approximate system \eqref{PDE-approx}.}  We exploit   convergences
\eqref{convs-eps}. 
%results  \eqref{conv1}, \eqref{conv3}, and \eqref{conv2}-\eqref{conv5} combined with \eqref{conv1bis},
% \eqref{convf1}, and  \eqref{convf2}.
To simplify notation we omit the subindex $k$ and denote the sequences of approximate solutions directly by the index $\eps$. We first discuss the convergence of the non-local terms $\nlocss{\chi_\eps}$ and $\nlocss{\chi_\eps|\uu_\eps|^2}$. We mainly exploit the continuity properties of the operator $\mathcal{K}$ stated by Lemma \ref{lemmaK}. We first observe that \eqref{convf2} implies that $\chi_\eps\rightarrow\chih$ in $\mathrm{C}^0([0, \widehat{T}];L^1(\GC))$, so that we can deduce \begin{equation}\label{convnl1}
 \nlocss{\chi_\eps}\rightarrow \nlocss{\chih}\quad\hbox{in }\mathrm{C}^0([0,\widehat{T}];L^\infty(\GC)).
 \end{equation}
 We analogously proceed to deal with the term $\nlocss{\chi_\eps|\uu_\eps|^2}$. Indeed,  \eqref{convf1} and \eqref{convf2}  lead to the strong convergence $\chi_\eps|\uu_\eps|^2\rightarrow\chih |\uuh|^2$ in $\mathrm{C}^0([0, \widehat{T}];L^1(\GC))$, % Note that
 since by Sobolev embeddings $\chi_\eps$ strongly converges in $\mathrm{C}^0([0, \widehat{T}];L^p(\GC))$ for any $1\leq p<\infty$, while $|\uu_\eps|^2\rightarrow|\uuh|^2$ in $\mathrm{C}^0([0,\widehat{T}];L^q(\GC))$ for all  $1\leq q<2$. Thus, we can deduce
   \begin{equation}\label{convnl2}
 \nlocss{\chi_\eps |\uu_\eps|^2}\rightarrow\nlocss{\chih|\uuh|^2}\quad\hbox{in }\mathrm{C}^0([0,\widehat{T}];L^\infty(\GC)).
 \end{equation}
Finally, \eqref{convnl1} and \eqref{convf1} with \eqref{bound1}, yield
\begin{equation}\label{convnl3}
|\uu_\eps|^2\nlocss{\chi_\eps}\debole |\uuh|^2\nlocss{\chih} \hbox{ in } L^2(0,\widehat{T} ;H),\quad|\uu_\eps|^2\nlocss{\chi_\eps}\rightarrow|\uuh|^2\nlocss{\chih} \hbox{ in } L^2(0, \widehat{T};L^p(\GC))  \text{ for all } 1\leq p<2.
\end{equation}
Analogously, we have that   %\eqref{st1}, \eqref{bound1}, \eqref{st4-new} (and trace theorems) combined with
 \eqref{convnl1} and \eqref{convf1}, \eqref{convf2}  imply (at least) that
\begin{equation}\label{convnl4}
\chi_\eps\uu_\eps\nlocss{\chi_\eps}\rightarrow \chih\uuh\nlocss{\chih}\quad\hbox{in } \mathrm{C}^0([0,\widehat{T} ];H).
\end{equation}
Now, it is a standard matter to pass to the limit (weakly) in the momentum balance
\eqref{eqU-approx} and in the flow rule \eqref{eqCHI-eta}
 and conclude that $(\uuh,\chih,\zetah,\xih,\widehat w)$ fulfill \eqref{eqU} and   \eqref{eqCHI}.  Actually, to complete our proof it remains to  identify $(\zetah,\xih,\widehat w)$. We point out that
 \eqref{convf2} and
 \eqref{conv4} %by semicontinuity
 lead to
 \[
 \limsup_{\eps \searrow0} \int_0^t \int_{\GC} \beta_\eps(\chi_\eps) \chi_\eps \dd x \dd r \leq \int_0^t \int_{\GC} \xih  \chih \dd x \dd r,
 \]
 for all $t\in (0,\widehat{T}]$,
 whence
 the identification $\xih\in\beta(\chih)$ a.e.\ in $\GC \times (0,\widehat{T})$. 
  Then, we aim to prove that
  \begin{align}
  &
  \label{ident-1}
  \zetah\in\balpha(\uuh)  && \text{in } \bsY' \  \aein \  (0,\widehat{T}),
  \\
  &
  \label{ident-2}
  \widehat w\in\rho(\partial_t\chih)   && \aein \  \GC \times  (0,\widehat{T}).
  \end{align}
   We first prove that
\begin{equation}
\limsup_{\varepsilon\searrow0}\int_0^t\langle\zzeta_\eps,\uu_\eps  \rangle_{\bsY} \dd r \leq \int_0^t\langle\zetah,\uuh  \rangle_{\bsY} \dd r,
\end{equation}
 whence \eqref{ident-1}. 
Let us  test \eqref{eqU-approx}
 by $\uu_\eps$ and integrate over $(0,t)$. This gives
 \begin{align}
 &\int_0^t\langle\zzeta_\eps,\uu_\eps  \rangle_{\bsY} \dd r =-\frac 1 2 b(\uu_\eps(t),\uu_\eps(t))+\frac 1 2 b(\uu_0,\uu_0)\\\no
 &-\int_0^t a(\uu_\eps,\uu_\eps) \dd r -\int_0^t\int_{\GC}\chi_\eps|\uu_\eps|^2 \dd x \dd r -\int_0^t\int_{\GC}\chi_\eps\uu_\eps^2\nlocss{\chi_\eps} \dd x \dd r +\int_0^t\langle F,\uu_\eps\rangle \dd r.
 \end{align}
 Taking the $\limsup$ as $\eps\searrow0$, exploiting the lower semicontinuity of the bilinear forms $a$ and $b$, as well as  \eqref{conv1}--\eqref{convf2}, \eqref{convnl2}, and \eqref{nl-symm}, we can infer that
 \begin{align}
 &\limsup_{\eps\searrow0}\int_0^t\langle\zzeta_\eps,\uu_\eps  \rangle_{\bsY} \dd r \leq -\frac 1 2 b(\uuh(t),\uuh(t))+\frac 1 2 b(\uu_0,\uu_0)\\\no
 &-\int_0^t a(\uuh,\uuh)  \dd r -\int_0^t\int_{\GC}\chih|\uuh|^2 \dd x \dd r -\int_0^t\int_{\GC}\chih\uuh^2\nlocss{\chih} \dd x \dd r +\int_0^t\langle F,\uuh\rangle \dd r = \int_0^t\langle\zetah,\uuh\rangle_{\bsY} \dd r ,
 \end{align}
 as desired. For the above inequality, we have  in particular used 
$$
-\int_0^t\int_{\GC}\chi_\eps\uu_\eps^2\nlocss{\chi_\eps} \dd x \dd r \rightarrow-\int_0^t\int_{\GC}\chih\uuh^2\nlocss{\chih} \dd x \dd r
$$
due to  \eqref{nl-symm}, \eqref{convf1}, \eqref{convf2}, and \eqref{convnl2}.
Analogously we proceed by testing \eqref{eqCHI-eta} by $\partial_t\chi_\eps$ and integrate over $(0,t)$ to prove that
\begin{equation}\label{lim-sup2}
\limsup_{\eps\searrow0}\int_0^t\int_{\GC}\rho_\eps(\partial_t\chi_\eps)\partial_t\chi_\eps \dd x \dd r \leq\int_0^t\int_{\GC}\widehat w\partial_t\chih \dd x \dd r
\end{equation}
  whence \eqref{ident-2}.
We have 
\begin{equation}
\label{needed-later}
 \begin{aligned}
\int_0^t\int_{\GC} \rho_\eps(\partial_t\chi_{\eps})\partial_t\chi_{\eps} \dd x \dd r  & =-\|\partial_t\chi_{\eps}\|^2_{L^2(0,t;H)}-
\frac{1}{2}\|\nabla\chi_\eps(t)\|^2_{H}+
\frac{1}{2}\|\nabla\chi_0\|^2_{H}
\\
 &
-\int_{\GC} \widehat\beta_\eps(\chi_{\eps}(t)) \dd x +\int_{\GC}
\widehat\beta_\eps(\chi_0) \dd  x -\int_{\GC} \gamma(\chi_{\eps}(t)) \dd x +\int_{\GC}
\gamma(\chi_0) \dd x
\\
&+\int_0^t\int_{\GC}
\left(-\frac{1}{2}|\uu_\eps|^2-\frac 12|\uu_\eps|^2\nlocss{\chi_\eps}-\frac 1 2 \nlocss{\chi_\eps|\uu_\eps|^2}\right)\partial_t\chi_{\eps} \dd x \dd r\,.
\end{aligned}
\end{equation}
First, we recall that by Mosco convergence of $\widehat\beta_\eps$  to $\widehat\beta$,  we have 
\begin{equation}
\liminf_{\eps\searrow 0} \int_{\GC}
\widehat\beta_\eps(\chi_{\eps}(t)) \dd x \geq\int_{\GC} \widehat\beta(\chih(t)) \dd x.
\end{equation}
Thus, taking the limsup as $\eps\searrow 0$  of both sides of \eqref{needed-later}   and   exploiting   \eqref{hyp-gamma}, \eqref{conv1}--\eqref{convf2}, \eqref{convnl2}, \eqref{convnl3} and the lower semicontinuity properties of
the   Lebesgue  norms, we get
 \begin{align}
& \limsup_{\eps \searrow 0}  \int_0^t\int_{\GC} \rho_\eps(\partial_t\chi_{\eps})\partial_t\chi_{\eps} \dd x \dd r \leq -\|\partial_t\chih\|^2_{L^2(0,t;H)}-
\frac{1}{2}\|\nabla\chih(t)\|^2_{H}+
\frac{1}{2}\|\nabla\chi_0\|^2_{H}
 \nonumber\\
 &
-\int_{\GC} \widehat\beta(\chih(t))\dd x +\int_{\GC}
\widehat\beta(\chi_0)\dd x -\int_{\GC} \gamma(\chih(t)) \dd x +\int_{\GC}
\gamma(\chi_0)\dd x \\\no
&+\int_0^t\int_{\GC}
\left(-\frac{1}{2}|\uuh|^2-\frac 12|\uuh|^2\nlocss{\chih}-\frac 1 2 \nlocss{\chih|\uuh|^2}\right)\partial_t\chih \dd x \dd r =\int_0^t\int_{\GC}\widehat w\partial_t\chih \dd x \dd r ,
\end{align}
 which gives  \eqref{lim-sup2}.
\par
The energy-dissipation inequality \eqref{enid-balance} follows from testing the momentum balance \eqref{eqU} by $\uu_t$, the flow rule \eqref{eqCHI} by $\chi_t$, integrating along the time interval $(0,\widehat{T})$, and  adding  the above relations. The key ingredients
to deduce \eqref{enid-balance}
are the chain rules
$\int_s^t a(\uu,\uu_t) \dd r = \tfrac12 a(\uu(t),\uu(t)) -  \tfrac12 a(\uu(s),\uu(s))$,
 \eqref{ch-rule-alpha}, and
\[
\begin{aligned}
&
\int_s^t \int_{\GC} \chi \uu \uu_t \dd x \dd r  +\frac12\int_s^t \int_{\GC}  |\uu|^2 \chi_t  \dd x \dd r
=\frac12 \int_{\GC}\chi(t)|\uu(t)|^2 \dd x  -\frac12 \int_{\GC}\chi(s)|\uu_(s)|^2 \dd x,
\\
&
\begin{aligned}
&
\int_s^t \int_{\GC} \chi \uu \nlocss{\chi} \uu_t \dd x \dd r  +\frac12\int_s^t \int_{\GC}  |\uu|^2  \nlocss{\chi} \chi_t  \dd x \dd r
+\frac12\int_s^t \int_{\GC}   \nlocss{\chi  |\uu|^2} \chi_t  \dd x \dd r
\\ &
\stackrel{(1)}{=}\frac12 \int_{\GC}\chi(t)|\uu(t)|^2 \nlocss{\chi(t)} \dd x  -\frac12 \int_{\GC}\chi(s)|\uu(s)|^2 \nlocss{\chi(s)} \dd x,
\end{aligned}
\\
&
\int_s^t \int_{\GC} \xi \chi_t \dd x \dd r =\int_{\GC}\widehat{\beta}(\chi(t)) \dd x - \int_{\GC}\widehat{\beta}(\chi(s)) \dd x,
\end{aligned}
\]
where for (1) we have also  used the symmetry property \eqref{nl-symm}.
In particular, observe
the first and second chain-rule identities
  allow us to combine the contributions from the momentum balance with  those from the adhesive flow rule. We also use that
\begin{equation}
\label{just-inequality}
\int_s^t\int_{\GC} \omega\chi_t \dd x \dd r  \geq \int_s^t \int_{\GC} \widehat{\rho}(\chi_t) \dd x \dd r
\end{equation}
since $\widehat{\rho}(0)=0$.
Then, \eqref{enid-balance}  follows with straightforward calculations.
\par
Observe that, if $\widehat{\rho}$ is $1$-positively homogeneous, \eqref{just-inequality}
 in fact holds with an \emph{equality} sign. Therefore, \eqref{enid-balance}  is valid as a \emph{balance}.
\end{proof}

%%%%%%%%%%%%%%%%%%%%%%%%%%%%%%%%%%%%%%%%%%%%%%%%%%%%%%%%%%%%%%%%%%%%%%%%%%%%%%%%%%%%%%%%%%%%%%%%%%%%%%%%%%%%%%%%%%%%%%%%%%%%%%%%%

\section{From a local to a global solution: conclusion of the proof of Theorem \ref{th:main}}
\label{s:5}
The standard procedure
for extending  the  local solution of Problem \ref{prob:PDE} found with Prop.\ \ref{prop:loc-exis} to a global one would involve
deriving  suitable a priori estimates on (local) solutions  of Prob.\ \ref{prob:PDE} on a generic interval $(0,\sfT)$, $\sfT \in (0,T]$, and showing that such estimates are in fact \emph{independent} of the final time $\sfT$. This would allow us to consider some maximal extension of the local solution from Prop.\ \ref{prop:loc-exis} and conclude, by a classical contradiction argument, that it  must be defined on the whole interval $(0,T)$.
\par
In the present case, in order to carry out the maximal extension argument, it would be necessary to rely on  a global-in-time  estimate
(1) in $\mathrm{C}^0 ([0,\sfT];\bsV)$ for $\uu$; (2) in $L^\infty (0,\sfT;W) \cap H^1(0,\sfT;V) \subset \mathrm{C}_{\mathrm{weak}}^0([0,\sfT];W)$ for $\chi$, in accordance with the requirements $\uu_0\in \bsV$ and $\chi_0\in W$ on the initial data. However,
while for $\uu$ a global estimate in $H^1(0,\sfT;\bsV)$ directly follows from the
 energy-dissipation
inequality \eqref{enid-balance} (cf.\ Lemma \ref{l:enid-cons} ahead), from  \eqref{enid-balance}
we can derive  \emph{global} estimates for $\chi$ only in  the spaces $L^\infty (0,\sfT;V) \cap H^1(0,\sfT;H)$
(and, as a by product,  cf.\ Lemma \ref{l:solvability-chi},
in $L^2(0,\sfT;W)$, as well).  In turn, the enhanced estimate  $L^\infty (0,\sfT;W) \cap H^1(0,\sfT;V) $ seems to be obtainable only upon performing the (formally written, here) test by $\partial_t(-\Delta\chi+\xi)$, with $\xi \in \beta(\chi)$. As shown by the proof of Prop.\ \ref{prop:loc-exis},
such a test can be made rigorous only in the frame of the approximate system \eqref{PDE-approx}. Unfortunately, for \eqref{PDE-approx}  the basic ``energy'' estimates for $\uu$ in  $H^1(0,\sfT;\bsV)$ and for $\chi $ in $L^\infty (0,\sfT;V) \cap H^1(0,\sfT;H)$
 do not possess a global-in-time character, cf.\  Remark \ref{rmk:features-reg-prob} and again  the proof of Prop.\ \ref{prop:loc-exis},  essentially because the lack of the positivity constraint on $\chi$ does not allow one to control from below the energy $\calE(\uu(t),\chi(t))$ on the left-hand side of \eqref{enid-balance}.
 \par
 These technical difficulties related to the extension procedure  were already manifest in the framework of the adhesive contact system first investigated in \cite{BBR1}, where nonlocal effects were disregarded.
 In Sec.\ 5 therein, to overcome them we have developed a careful prolongation argument,
 where the key idea is to extend the ($\uu$- and $\chi$-components of a) local solution $(\uuh,\zetah,\chih,\omegah,\xih)$, along with its `approximability properties'. The latter play a key role in recovering the enhanced regularity estimate for the $\chi$-component  in $L^\infty (0,\sfT;W) \cap H^1(0,\sfT;V) $.
 \par
 In what follows, we will adapt the argument from  \cite{BBR1} to our own situation. To avoid overburdening the paper, we will explain the steps of the procedure but  omit several details and  often refer to the calculations in  \cite[Sec.\ 5]{BBR1}, which could indeed be repeated with minor changes in the present framework.
\paragraph{\bf Scheme of the proof of the extension procedure.}
For the intents and purposes of our extension argument, we slightly modify the terminology introduced in Sec.\ \ref{s:2} and call \emph{solution of  Problem \ref{prob:PDE}}
on an interval $(0,\sfT)$
 \begin{equation}
 \label{sfT-later}
\begin{gathered}
\text{any pair $(\uu,\chi)$ with $\uu\in H^1(0,\sfT;\bsV)$ and $\chi \in L^\infty(0,\sfT;W) \cap H^1(0,\sfT;V) \cap W^{1,\infty}(0,\sfT;H)$ such that}
\\
\text{there exist $(\zzeta,\omega,\xi) \in L^2(0,\sfT;\bsY') \times L^\infty(0,\sfT;H) \times  L^\infty(0,\sfT;H) $}
\\
\text{such that $(\uu,\zzeta,\chi,\omega,\xi)$ solve system \eqref{PDE} on $(0,\sfT)$};
\end{gathered}
\end{equation}
 we  will adopt the very same convention for the solutions of the approximate Problem \ref{prob:approx}.
 \par
Recall that, in Prop.\ \ref{prop:loc-exis} we have shown that, along a sequence $\eps_k \down 0$, the (unique) solutions $(\uu_{\eps_k},\chi_{\eps_k})$ of Prob.\ \ref{prob:approx} (with regularization parameter $\eps_k$)
converge to a solution $(\uuh,\chih)$ of Prob.\ \ref{prob:PDE} on the time interval $(0,\widehat{T})$. In the following lines, for a fixed $\sfT\in (0,T]$ we will denote by
$(\uu_{\eps_k}^\sfT,\chi_{\eps_k}^\sfT)$  the unique solution of Prob.\  $\ref{prob:approx}_{\eps_k}$ on $(0,\sfT)$: clearly, for $\sfT \geq \widehat{T}$, $(\uu_{\eps_k}^\sfT,\chi_{\eps_k}^\sfT)$ is the (unique) extension of $(\uu_{\eps_k},\chi_{\eps_k})$.
\par
We are now in a position to define the solution concept for Problem  \ref{prob:PDE}
 that we are going to extend on the whole interval $[0,T]$.
\begin{definition}
\label{def:approx-sol}
Let $\sfT\in (0,T]$. We call a solution (in the sense of \eqref{sfT-later})
$
(\uu^{\sfT},\chi^{\sfT})
$
of Problem  \ref{prob:PDE} on $(0,\sfT)$
an \emph{approximable solution}   if there exists a (not-relabeled) subsequence  of $(\eps_k)_k$ such that
the related sequence $(\uu_{\eps_k}^\sfT,\chi_{\eps_k}^\sfT)_k$
 of solutions of  Problem  $\ref{prob:approx}_{\eps_k}$ on $(0,\sfT)$
 converge to $(\uu^\sfT,\chi^{\sfT})$ in the sense
 \begin{equation}
 \label{convs-sense}
 \| \uu_{\eps_k}^\sfT - \uu^\sfT\|_{\mathrm{C}^0([0,\sfT];\bsV)} +  \| \chi_{\eps_k}^\sfT - \chi^\sfT\|_{\mathrm{C}^0([0,\sfT];V)}  \to 0 \quad \text{as } k \to \infty\,.
 \end{equation}
\end{definition}
It is immediate to check that, for all $\sfT\in (\widehat{T},T]$ the functions $(\uu^\sfT,\chi^\sfT)$ are a proper extension of $(\uuh,\chih)$.
\par
Let us introduce the set
\[
\mathscr{T}: = \{ \sfT\in (0,T]\, : \ \text{there exists an approximable solution } (\uu^{\sfT},\chi^\sfT) \text{ of Prob.\ \ref{prob:PDE} on } (0,\sfT)\}.
\]
The calculations from the proof of Prop.\ \ref{prop:loc-exis} reveal  that $ (\uu^{\sfT},\chi^\sfT) $ fulfill the energy-dissipation inequality
\eqref{enid-balance}
(as a balance if $\widehat\rho$ is $1$-positively homogeneous) along any sub-interval $[s,t]\subset [0,\sfT]$.
Clearly, $\widehat{T} \in \mathscr{T}$, which is thus non-empty.
We aim to show that
\begin{equation}
\label{target-extension}
T^*: = \sup \mathscr{T} = T\,.
\end{equation}
In this way, we will conclude that $(\uuh,\chih)$ extends to a global solution on (0,T), fulfilling
\eqref{enid-balance}.
\par
The proof of \eqref{target-extension} is in turn split in the following steps:
\begin{description}
\item[\textbf{Step $1$}]  we show that  the basic \emph{energy estimates} for approximable solutions $(\uu^\sfT,\chi^\sfT)$ hold with constants independent of the final time $\sfT$;
\item[\textbf{Step $2$}]  we deduce that  the local solution $(\uuh,\chih)$ admits an extension $(\uu^*,\chi^*)$ on $(0,T^*)$, with $\uu^* \in H^1(0,T^*;\bsV)$ and $\chi \in L^2(0,T^*;W) \cap L^\infty (0,T^*;V) \cap H^1(0,T^*;H)$;
\item[\textbf{Step $3$}]  we show that $(\uu^*,\chi^*)$ is a solution  of Problem  \ref{prob:PDE}
(in the sense of \eqref{sfT-later}) on
$(0,T^*)$;
\item[\textbf{Step $4$}]
we conclude \eqref{target-extension}
via a contradiction argument.
\end{description}
We conclude this section by addressing the single steps with slightly more detail.
\paragraph{\bf Step $1$:}  We have the following result.
\begin{lemma}
\label{l:enid-cons}
Assume \eqref{hyp:GC}, \eqref{ass-K}, \eqref{hyp-alpha}--\eqref{hyp-gamma},
and \eqref{hyp:forces}.
Then, there exists a positive function $Q_4$  such that, for any $\sfT>0$ and any solution $(\uu,\chi)$ of Problem \ref{prob:PDE} on $(0,\sfT)$ there holds
\[
\| \uu\|_{H^1(0,\sfT;\bsV)} + \|\chi\|_{L^2(0,\sfT;W) \cap L^\infty(0,\sfT;V) \cap H^1(0,\sfT;H)} \leq
Q_4 (\|\uu_0\|_{\bsV},  \widehat\balpha(\uu_0),  \|\chi_0\|_{V}, \|\widehat{\beta}(\chi_0)\|_{L^1(\GC)},
\|\mathbf{F}\|_{L^2(0,\sfT;\bsV')})\,.
\]
\end{lemma}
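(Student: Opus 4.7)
The plan is to deduce all three bounds from the energy--dissipation inequality \eqref{enid-balance} specialised to $s=0$, which, as the authors emphasise, is satisfied along every solution of Problem \ref{prob:PDE}. The crucial additional ingredient is the confinement \eqref{confinement}, namely $\chi(t,x)\in[0,1]$ a.e.\ on $\GC\times[0,\sfT]$, which permits a \emph{uniform-in-$t$} lower bound on $\calE(\uu(t),\chi(t))$.

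First I would show that
\[
\calE(\uu(t),\chi(t))\;\geq\; \tfrac{C_a}{2}\|\uu(t)\|_{\bsV}^2 + \tfrac12\|\nabla\chi(t)\|_H^2 - C_\gamma,
\]
with $C_\gamma:=\mathcal{H}^2(\GC)\max_{[0,1]}|\gamma|$. Indeed, by \eqref{korn} the elastic contribution dominates $\tfrac{C_a}{2}\|\uu\|_{\bsV}^2$; the functionals $\bwalpha(\uu)$ and $\widehat{\beta}(\chi)$ are non-negative; $\chi\geq 0$ and $\kr\geq 0$ force the two coupling integrals $\tfrac12\int_{\GC}\chi|\uu|^2\dd x$ and $\tfrac12\int_{\GC}\chi|\uu|^2\nlocss{\chi}\dd x$ to be non-negative; and $\bigl|\int_{\GC}\gamma(\chi)\dd x\bigr|\leq C_\gamma$ by continuity of $\gamma$ on the compact $[0,1]$. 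On the dissipation side, \eqref{korn} gives $\int_0^t b(\uu_t,\uu_t)\dd r\geq C_b\|\uu_t\|_{L^2(0,t;\bsV)}^2$, while Young's inequality applied to $\int_0^t\pairing{}{\bsV}{\mathbf{F}}{\uu_t}\dd r$ (as in \eqref{added-4-later}) absorbs half of this dissipation at the cost of a term $\tfrac{1}{2C_b}\|\mathbf{F}\|_{L^2(0,\sfT;\bsV')}^2$. The initial energy $\calE(\uu_0,\chi_0)$ is in turn controlled by the quantities appearing in the statement, exploiting $\chi_0\in[0,1]$, the embedding $\bsV\hookrightarrow L^4(\GC)$ for the coupling integral, and \eqref{stima-puntuale} for the nonlocal contribution. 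Plugging these estimates into \eqref{enid-balance} and rearranging yields
\[
C_b\|\uu_t\|_{L^2(0,t;\bsV)}^2 + \|\chi_t\|_{L^2(0,t;H)}^2 + C_a\|\uu(t)\|_{\bsV}^2 + \|\nabla\chi(t)\|_H^2 \leq Q_4,
\]
with $Q_4$ depending only on the listed quantities and, crucially, \emph{not} on $\sfT$. This delivers the $H^1(0,\sfT;\bsV)$ bound for $\uu$ and the $L^\infty(0,\sfT;V)\cap H^1(0,\sfT;H)$ bound for $\chi$.

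For the remaining $L^2(0,\sfT;W)$ estimate I would argue by a comparison in the flow rule \eqref{eqCHI}: the right-hand side of \eqref{eqCHI} is controlled in $L^\infty(0,\sfT;H)$ via the previous bounds, the embedding $\bsV\hookrightarrow L^4(\GC)$, and \eqref{stima-puntuale}--\eqref{nonloc-later}; likewise $\gamma'(\chi)\in L^\infty$ since $\gamma'$ is Lipschitz and $\chi\in[0,1]$. Testing \eqref{eqCHI} by $-\Delta\chi$, integrating by parts with \eqref{bord-chi} (which converts the $\chi_t$ contribution into $\tfrac12\tfrac{\dd}{\dd t}\|\nabla\chi\|_H^2$), and exploiting the monotonicity inequality $\int_{\GC}\xi\,(-\Delta\chi)\dd x\geq 0$---valid for $\xi\in\beta(\chi)$ with $\chi\in W$, as one verifies via the Moreau--Yosida approximation $\beta_\lambda$ through the identity $\int_{\GC}\beta_\lambda(\chi)(-\Delta\chi)\dd x=\int_{\GC}\beta_\lambda'(\chi)|\nabla\chi|^2\dd x\geq 0$ followed by a passage to the limit---produces a bound for $\|\Delta\chi\|_{L^2(0,\sfT;H)}$, whence $\chi\in L^2(0,\sfT;W)$ by standard elliptic regularity on $\GC$ (recall $\partial\GC\in\mathrm{C}^{1,1}$).

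The chief obstacle will be the $\omega$-contribution $\int_0^t\int_{\GC}\omega(-\Delta\chi)\dd x\dd r$, since the sole control on $\omega\in\rho(\chi_t)$ afforded by \eqref{enid-balance} is via $\int\widehat{\rho}(\chi_t)\dd x\in L^1$, which does \emph{not} directly furnish an $L^2(0,\sfT;H)$ bound on $\omega$ as would be needed for a direct Young absorption. This is bypassed by recalling that in the extension procedure of Section \ref{s:5} only \emph{approximable} solutions in the sense of Definition \ref{def:approx-sol} are considered: for these, the $L^2(0,\sfT;W)$ bound is inherited by weak lower semicontinuity from the uniform-in-$\eps$ $L^2(0,\sfT;W)$ estimate for the approximating sequence $\chi_\eps$ established in Lemma \ref{l:solvability-chi}, where the Lipschitz character of $\rho_\eps$ and $\beta_\eps$ closes the corresponding comparison at the approximate level.
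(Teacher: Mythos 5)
Your treatment of the first two norms is essentially identical to the paper's (Lemma~\ref{l:enid-cons}, sketch of proof): specialise \eqref{enid-balance} to $s=0$, absorb $\int_0^t\pairing{}{\bsV}{\mathbf{F}}{\uu_t}$ into the dissipation via Young (as in \eqref{added-4-later}), bound $|\calE(\uu_0,\chi_0)|$, and read off the bounds for $\uu$ in $H^1(0,\sfT;\bsV)$ and for $\chi$ in $L^\infty(0,\sfT;V)\cap H^1(0,\sfT;H)$. Your explicit coercivity bound $\calE(\uu(t),\chi(t))\geq\tfrac{C_a}{2}\|\uu(t)\|_{\bsV}^2+\tfrac12\|\nabla\chi(t)\|_H^2-C_\gamma$, which uses $0\leq\chi\leq1$ and $\kr\geq0$ to make the two coupling integrals and all constraint functionals nonnegative, is a useful detail that the paper only invokes implicitly when it writes $\sup_t|\calE(\uu(t),\chi(t))|\leq C$. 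Note in passing that one also needs $\|\chi\|_{L^\infty(0,\sfT;H)}$, which does not come from $\|\nabla\chi\|_H$ alone but follows trivially from $\chi\in[0,1]$.

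For the $L^2(0,\sfT;W)$ bound, the paper simply says ``comparison for $-\Delta\chi$ in the flow rule,'' whereas you propose testing \eqref{eqCHI} by $-\Delta\chi$; that is a genuinely different tactic and, as you correctly observe, it stumbles on the term $\int_0^t\int_{\GC}\omega(-\Delta\chi)\dd x\dd r$, since $\omega\in\rho(\chi_t)$ admits no gradient and no $L^2(0,\sfT;H)$ control from \eqref{enid-balance}. Identifying this obstruction is the right insight, and falling back on approximability (Definition~\ref{def:approx-sol}) is indeed the mechanism the paper itself uses in Section~\ref{s:5}. However, the sentence ``the Lipschitz character of $\rho_\eps$ and $\beta_\eps$ closes the corresponding comparison at the approximate level'' is not correct as a route to the \emph{$\eps$-uniform} $L^2(0,\sfT;W)$ bound: the Lipschitz constants of $\rho_\eps,\beta_\eps$ scale like $1/\eps$, so a direct comparison of $-\Delta\chi_\eps$ against $\rho_\eps(\partial_t\chi_\eps)$ and $\beta_\eps(\chi_\eps)$ blows up as $\eps\downarrow0$ and cannot survive the passage to the limit. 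What actually delivers the $\eps$-uniform bound is the \emph{regularity estimate} obtained by testing \eqref{eqCHI-eta} with $\partial_t(-\Delta\chi_\eps+\beta_\eps(\chi_\eps))$ (cf.\ \eqref{stimareg}--\eqref{st4-new}): there the $\rho_\eps$-contribution appears as $\int\rho_\eps'(\partial_t\chi_\eps)|\nabla\partial_t\chi_\eps|^2+\int\beta_\eps'(\chi_\eps)\rho_\eps(\partial_t\chi_\eps)\partial_t\chi_\eps\geq0$ and can be dropped, and the bound $\|-\Delta\chi_\eps+\beta_\eps(\chi_\eps)\|_{L^\infty(0,\sfT;H)}\leq C$ is uniform in $\eps$ because it is fed by $\|\uu_\eps\|_{H^1(0,\sfT;\bsV)}$, which you already control through the energy estimate. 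So the approximability reduction is correct, but the estimate underneath it is the $\partial_t(-\Delta\chi+\beta_\eps(\chi))$-test, not a Lipschitz comparison. Finally, be aware that the statement concerns ``any solution'' of Problem~\ref{prob:PDE} in the sense of \eqref{sfT-later}, while your argument for the last norm only covers approximable solutions; in the paper's Section~\ref{s:5} this distinction is harmless because Lemma~\ref{l:enid-cons} is applied exclusively to approximable solutions, but as written your proof is a little narrower than the stated claim.
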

\begin{proof}[Sketch of the proof]
We consider the energy-dissipation inequality
\eqref{enid-balance}
 on the interval $(0,\sfT)$ and observe that  the last integral term on the right-hand side can be absorbed into the dissipative term
$\int_0^\sfT b(\uu_t,\uu_t) \dd r $ on the left-hand side, cf.\ \eqref{added-4-later}.
Therefore, taking into account that
\[
|\calE (\uu_0,\chi_0) | \leq C (\|\uu_0\|_{\bsV} + \widehat\balpha(\uu_0) +  \|\chi_0\|_{V} +  \|\widehat{\beta}(\chi_0)\|_{L^1(\GC)}),
\]
 from \eqref{enid-balance}
we gather the bounds
\[
\sup_{t\in [0,\sfT]} |\calE(\uu(t),\chi(t))| + \int_0^{\sfT} 2 \calR(\uu_t,\chi_t) \dd r \leq C,
\]
whence the estimates for $\uu$ and $\chi$ in $H^1(0,\sfT;\bsV)$ and $L^\infty(0,\sfT;V) \cap H^1(0,\sfT;H)$. The estimate for $\chi$ in $L^2(0,\sfT;W)$ follows from a comparison for $-\Delta\chi$ in the flow rule for $\chi$, cf.\ the proof of Lemma \ref{l:solvability-chi}.
\end{proof}
\paragraph{\bf Step $2$:}
We consider a family $(\uu^\sfT,\chi^\sfT)_{\sfT\in \mathscr{T}}$ of approximable solutions, and define
\[
\begin{aligned}
&
\widetilde{\uu}_{\sfT}: [0,T^*] \to \bsV \quad \text{by } \ \widetilde{\uu}_{\sfT}(t): = \begin{cases}
\uu^\sfT(t) & \text{if } t\in [0,\sfT],
\\
\uu^\sfT(\sfT) & \text{if } t\in (\sfT, T^*],
\end{cases}
\\
&
\widetilde{\chi}_{\sfT}: [0,T^*] \to V \quad \text{by } \ \widetilde{\chi}_{\sfT}(t): = \begin{cases}
\chi^\sfT(t) & \text{if } t\in [0,\sfT],
\\
\chi^\sfT(\sfT) & \text{if } t\in (\sfT, T^*].
\end{cases}
\end{aligned}
\]
We consider a sequence $(\sfT_m)_m \subset \mathscr{T}$ with $\sfT_m\uparrow T^*$ and extract a (not) relabeled subsequence $(\widetilde{\uu}_{\sfT_m}, \widetilde{\chi}_{\sfT_m})_m$ suitably converging to a pair $(\uu^*,\chi^*)$ defined on $[0,T^*]$, which extends $(\uuh,\chih)$.
\paragraph{\bf Step $3$:}
We show that
$\chi^* \in L^\infty(0,T^*;W) \cap H^1(0,T^*;V) \cap W^{1,\infty}(0,T^*;H)  $ and that $(\uu^*,\chi^*)$ is a solution of Problem \ref{prob:PDE}, in the sense that
\[
\exists\, (\zzeta^*,\omega^*,\xi^*)\in L^2(0,T^*;\bsY') \times L^\infty(0,T^*;H)  \times L^\infty(0,T^*;H)  \text{ s.t. } (\uu^*,\zzeta^*,\chi^*,\omega^*,\xi^*) \text{ fulfill \eqref{PDE} on } (0,T^*).
\]
The proof of  enhanced regularity for $\chi^*$ and of  the  existence of the triple $(\zzeta^*,\omega^*,\xi^*)$ relies on the very notion of \emph{approximable} solution. For each element of the sequence $(\widetilde{\uu}_{\sfT_m}, \widetilde{\chi}_{\sfT_m})_m$ from Step 2 we pick a sequence of solutions  to the approximate Problem \ref{prob:approx} on $(0,\sfT_m)$, converging to $(\widetilde{\uu}_{\sfT_m}, \widetilde{\chi}_{\sfT_m})$ for fixed $m\in \N$. With a diagonalization procedure we extract a subsequence converging to $(\uu^*,\chi^*)$. We perform the  \emph{regularity} estimates leading to the enhanced properties of $\chi^*$, and to the existence of the triple $(\zzeta^*,\omega^*,\xi^*)$, on the level of the approximate system, by repeating the very same calculations in the proof of Proposition \ref{prop:loc-exis}. We refer the reader to \cite[Sec.\ 5]{BBR1} for all the details of the argument.
\paragraph{\bf Step $4$:}
Suppose that $T^*<T$.
To obtain a contradiction, it is sufficient to  extend the approximable solution $(\uu^*,\chi^*)$, defined on the interval $[0,T^*]$, to an approximable solution on the interval $[0,T^*+\eta]$ for some $\eta>0$. The argument
%Weextend The contradiction argument
%leading to
%consists in
for this
 is completely analogous to the one developed in  \cite[Sec.\ 5]{BBR1} for the adhesive contact system without nonlocal effects. We once again refer to  \cite{BBR1} for all  details.
\par
In this way, we deduce  \eqref{target-extension} and thus  conclude the proof of Theorem \ref{th:main}. \QED

%%%%%%%%%%%%%%

                                %
                                %

\end{document}